\newcommand\shorttitle{On the singular values of complex matrix Brownian motion with a matrix drift}
\newcommand\authors{T. Assiotis}
\ifodd\value{page}
\authors
\shorttitle
\newtheorem{thm}{Theorem}[section]
\newtheorem{cor}[thm]{Corollary}
\newtheorem{lem}[thm]{Lemma}
\newtheorem{defn}[thm]{Definition}
\newtheorem{rmk}[thm]{Remark}
\newtheorem{prop}[thm]{Proposition}
\title{\large \bf ON THE SINGULAR VALUES OF COMPLEX MATRIX BROWNIAN MOTION WITH A MATRIX DRIFT}
\author{\small THEODOROS ASSIOTIS}
\date{}
\begin{document}

\maketitle

\begin{abstract}
Let $\textnormal{Mat}_{\mathbb{C}}(K,N)$ be the space of $K\times N$ complex matrices. Let $\mathbf{B}_t$ be Brownian motion on $\textnormal{Mat}_{\mathbb{C}}(K,N)$ starting from the zero matrix and $\mathbf{M}\in\textnormal{Mat}_{\mathbb{C}}(K,N)$. We prove that, with $K\ge N$, the $N$ eigenvalues of $\left(\mathbf{B}_t+t\mathbf{M}\right)^*\left(\mathbf{B}_t+t\mathbf{M}\right)$ form a Markov process with an explicit transition kernel. This generalizes a classical result of Rogers and Pitman \cite{RogersPitman} for multidimensional Brownian motion with drift which corresponds to $N=1$. We then give two more descriptions for this Markov process. First, as independent squared Bessel diffusion processes in the wide sense, introduced by Watanabe \cite{Watanabe} and studied by Pitman and Yor \cite{PitmanYor}, conditioned to never intersect. Second, as the distribution of the top row of interacting squared Bessel type diffusions in some interlacting array. The last two descriptions also extend to a general class of one-dimensional diffusions.
\end{abstract}

\tableofcontents

\section{Introduction}

The fact that the square radial part of multidimensional Brownian motion is a Markov process is well-known. From the point of view of the theory of Markov processes this is a simple application of Dynkin's criterion \cite{RevuzYor}. Alternatively it can be proven using It\^{o}'s formula which produces a closed stochastic differential equation (SDE) for the squared Bessel process \cite{RevuzYor,SurveyBessel}. 

When we incorporate a drift vector however these simple approaches no longer work; for example It\^{o}'s formula does not produce a closed equation. Nevertheless, it is still the case that the resulting radial process is Markovian with an explicit transition kernel. This is a theorem of Rogers and Pitman from \cite{RogersPitman}. It was proven using a celebrated criterion (which they introduced in the same paper), that involves intertwinings, of when a function of a Markov process is itself Markovian.

In this work we consider a matrix generalization of this result. We look at a rectangular complex matrix Brownian motion with an arbitrary matrix drift, $\left(\mathbf{B}_t+t\mathbf{M};t\ge 0\right)$. The corresponding process $\left(\mathbf{X}_t^{\mathbf{M}};t\ge 0\right)=\left(\left(\mathbf{B}_t+t\mathbf{M}\right)^*\left(\mathbf{B}_t+t\mathbf{M}\right);t\ge 0\right)$ on positive definite matrices generalizes the much-studied Laguerre matrix diffusion, see \cite{OConnell,Demni}, which is the special case $\mathbf{M}\equiv 0$. Its distribution at a fixed time $T\ge 0$ is given by the non-central complex Wishart distribution. This distribution seems to have been first studied in detail in the statistics literature by James \cite{James2}. A key tool in the analysis is the theory of zonal polynomials developed independently by James and Hua, see \cite{James1,Hua,Muirhead,Macdonald}. In the mathematical physics literature this random matrix is also usually called the chiral Gaussian ensemble with an external source, see for example \cite{QCD,QCD2}. In the past couple of decades there has been significant interest also in asymptotic (as the matrix size goes to infinity) questions related to this ensemble, in particular about global and local eigenvalue statistics, large deviations and connections to free probability, see for example \cite{DozierSilverstein,ForresterLiu,HardyKuijlaars,Benaych} and the references therein. Returning to the discussion of the dynamical picture, our first main result in this paper is that the square singular values of $\left(\mathbf{B}_t+t\mathbf{M};t\ge 0\right)$, equivalently the eigenvalues of $\left(\mathbf{X}_t^{\mathbf{M}};t\ge 0\right)$, form a Markov process with an explicit transition kernel.

We then prove that this Markov process can be realised as independent squared Bessel diffusion processes in the wide sense conditioned to never collide, in particular it consists of non-intersecting paths. The Bessel diffusion process in the wide-sense was introduced by Watanabe \cite{Watanabe} in his investigation of invariance of one-dimensional diffusions under time inversion and then studied in detail by Pitman and Yor \cite{PitmanYor}. The prototypical result of the kind we prove here is a theorem of Grabiner \cite{Grabiner} for Dyson Brownian motion \cite{Dyson,AGW}, the evolution of eigenvalues of Brownian motion on Hermitian matrices, which realises it as independent Brownian motions conditioned to never intersect. An analogous interpretation exists for the eigenvalues of the Laguerre matrix diffusion discovered by K\"onig and O'Connell in \cite{OConnell}. The result of Grabiner for Brownian motions was extended to the case with drifts in \cite{BBO}, see also \cite{JonesOConnell,Puchala,Katori1,Katori2} for further details on conditioned to never intersect drifting Brownian motions. Finally, we show in this paper that the non-collision probability and the transition kernel of the corresponding conditioned process can be computed explicitly for a more general class of one-dimensional diffusions.

We then give a third and final description of the Markov process under consideration as the distribution of the top row of interacting squared Bessel type diffusions in some (non-triangular) interlacing array. The dynamics we consider here are in analogy to interacting, via reflections, Brownian motions in a triangular interlacing array, see \cite{Warren}. In the Brownian case the projection on the autonomous edge particle system gives the well-known model of Brownian motions with one-sided reflections, equivalently Brownian last passage percolation. An analogous, albeit more complicated, autonomous interacting particle system also appears at the edge of the array in our dynamics. Connections to random matrices for Brownian motions with one-sided reflections, at a fixed time, were first discovered by Baryshnikov \cite{Baryshnikov} and Gravner-Tracy-Widom \cite{GTW} and extended to the process level by O'Connell-Yor \cite{OConnellYor} and Bougerol-Jeulin \cite{BougerolJeulin}. Analogous results in the setting of the present paper are a consequence of our investigation. The Brownian model \cite{Warren} can also be extended to include drifts for the interacting Brownian motions \cite{FerrariFrings,InterlacingDiffusions}. The fact that it is possible to involve drifts in the autonomous edge particle system of Brownian motions with one-sided reflections, while retaining some of the integrability of the model, also plays an important role in a recent characterization theorem of the KPZ fixed point from KPZ universality, see \cite{Virag}. Finally, this construction involving squared Bessel type diffusions that we present here also extends to a more general class of interacting one-dimensional diffusions, via collisions, in an interlacing array so that the distribution of the top row matches the one of the conditioned process mentioned at the end of the previous paragraph. We conclude by noting that a consequence of our results (that we will not explore further here, see Appendix \ref{SectionDet}) is that there is an underlying determinantal point process \cite{AGW,BorodinRains} structure in all the models we consider.

In the next three subsections of the introduction we introduce the necessary notations and terminology to state precisely these three descriptions of the Markov process described above. The proofs (in a more general setting) are then given in Sections \ref{SectionMatrix}, \ref{SectionConditioned} and \ref{SectionInteractingDiffusions} respectively.

\subsection{The matrix process and its eigenvalues}

Let $\textnormal{Mat}_{\mathbb{C}}(K,N)$ be the space of $K\times N$ complex matrices.
Let $\textnormal{Her}_\mathbb{C}(N)$ be the space of $N\times N$ complex Hermitian matrices and $\textnormal{Her}^+_\mathbb{C}(N)$ be the subset of positive definite ones. We define the map $\mathsf{eval}$ on Hermitian matrices that takes a matrix $\mathbf{H}\in \textnormal{Her}_\mathbb{C}(N)$ to its ordered, in a non-decreasing fashion, eigenvalues $\lambda_1(\mathbf{H})\le \dots \le \lambda_N(\mathbf{H})$
\begin{align*}
\mathsf{eval}\left(\mathbf{H}\right)=\left(\lambda_1(\mathbf{H}),\dots,\lambda_N(\mathbf{H})\right).
\end{align*}
Throughout we denote by $\left(\mathbf{B}_t;t\ge 0\right)$ the following stochastic process on $\textnormal{Mat}_{\mathbb{C}}(K,N)$:
\begin{align*}
 \left(\mathbf{B}_t\right)_{ij}=\mathsf{w}_{ij}(t)+\textnormal{i}\tilde{\mathsf{w}}_{ij}(t),
\end{align*}
where $\left(\mathsf{w}_{ij}\right)_{\substack{i=1,\dots, K \\ j=1,\dots, N}}$ and $\left(\tilde{\mathsf{w}}_{ij}\right)_{\substack{i=1,\dots, K \\ j=1,\dots, N}}$ are independent standard (real) Brownian motions. We call any matrix process on $\textnormal{Mat}_{\mathbb{C}}(K,N)$ which has the same distribution as $\left(\mathbf{B}_t;t \ge 0\right)$ a matrix Brownian motion on $\textnormal{Mat}_{\mathbb{C}}(K,N)$.

Suppose $K\ge N$ and consider the following stochastic process $\left(\mathbf{X}_t;t\ge 0\right)=\left(\mathbf{B}_t^*\mathbf{B}_t;t\ge 0\right)$ on $\textnormal{Her}^+_\mathbb{C}(N)$. This is called the Laguerre matrix process and was first studied in \cite{OConnell}. Some of its properties were then investigated in detail in \cite{Demni}. Its analogue on real symmetric matrices is called the Wishart process and was studied earlier in \cite{Bru}. Using  It\^{o}'s formula, see \cite{Demni}, it can be seen that the Laguerre process solves the following closed matrix SDE
\begin{align*}
    d\mathbf{X}_t=\sqrt{\mathbf{X}_t}d\mathbf{\Gamma}_t+d\mathbf{\Gamma}_t^*\sqrt{\mathbf{X}_t}+2K\mathbf{I}_Ndt,
\end{align*}
where $\mathbf{\Gamma}_t$ is a matrix Brownian motion on $\textnormal{Mat}_{\mathbb{C}}(N,N)$ obtained from $\mathbf{B}_t$ and $\mathbf{I}_N$ is the $N\times N$ identity matrix.

It is a remarkable fact, first proven in \cite{OConnell}, that the evolution of its eigenvalues $\left(\mathsf{eval}\left(\mathbf{X}_t\right);t\ge 0\right)$, equivalently the square singular values of $\left(\mathbf{B}_t;t\ge0\right)$, is Markovian. This evolution consists of non-intersecting paths living\footnote{The eigenvalues are almost surely strictly ordered and non-vanishing for all $t>0$, see \cite{Bru,Demni,Noncolliding}.} in the (strictly) ordered chamber
\begin{align*}
 \mathbb{W}_N=\{x=(x_1,\dots,x_N)\in (0,\infty)^N : x_1<\cdots<x_N \}.
\end{align*}
Moreover, the transition density with respect to the Lebesgue measure in $\mathbb{W}_N$ of this Markov process is explicit and given by\footnote{This is exactly the transition density of $N$ independent squared Bessel diffusions of index $\nu$ conditioned to never intersect, as proven in \cite{OConnell}.}, see \cite{OConnell,Demni}, with the parameter $\nu=K-N$:
\begin{align}\label{LaguerreEvalSemi}
    q_t^{(\nu),N}\left(x,y\right)=\frac{\mathsf{\Delta}_N(y)}{\mathsf{\Delta}_N(x)}\det\left(q_t^{(\nu)}(x_i,y_j)\right)_{i,j=1}^N.
\end{align}
Here, and throughout this paper, $\mathsf{\Delta}_N$ denotes the Vandermonde determinant
\begin{align*}
 \mathsf{\Delta}_N(z)=\det\left(z_i^{j-1}\right)_{i,j=1}^N=\prod_{1\le i<j \le N}(z_j-z_i)  
\end{align*}
and $q_t^{(\nu)}$ is the transition density of the squared Bessel (one-dimensional) diffusion with index $\nu$, see \cite{RevuzYor,SurveyBessel},
\begin{align}\label{BESQdensity}
    q_t^{(\nu)}(x,y)=\frac{1}{2t}\left(\frac{y}{x}\right)^{\frac{\nu}{2}}e^{-\frac{(x+y)}{2t}}I_\nu\left(\frac{\sqrt{xy}}{t}\right),
\end{align}
where $I_\nu$ is the modified Bessel function of the first kind. We also need a final piece of notation. For $\lambda>0$ and $\nu \in \mathbb{R}$ we define the function
\begin{align*}
\phi_{\lambda}^{(\nu)}(x)=(2\lambda x)^{-\frac{\nu}{2}}I_{\nu}\left(\left(2\lambda x\right)^{\frac{1}{2}}\right).
\end{align*}

Let $\mathbf{M}\in \textnormal{Mat}_{\mathbb{C}}(K,N)$ be arbitrary. We are interested in the evolution  of the eigenvalues of the matrix process $\left(\mathbf{X}_t^{\mathbf{M}};t\ge 0\right)=\left(\left(\mathbf{B}_t+t\mathbf{M}\right)^*\left(\mathbf{B}_t+t\mathbf{M}\right);t\ge 0\right)$, equivalently the square singular values of $\left(\mathbf{B}_t+t\mathbf{M};t\ge 0\right)$. For $N=1$ this is precisely the setting of the result of Rogers-Pitman \cite{RogersPitman} for multidimensional Brownian motion with drift (for even real dimension since we consider complex Brownian motion here). We have the following theorem.

\begin{thm}\label{EigenvalueTheorem}
Let $K\ge N$ and consider $\mathbf{M}\in \textnormal{Mat}_{\mathbb{C}}(K,N)$ with $\mu=\mathsf{eval}\left(\mathbf{M}^*\mathbf{M}\right)$. Assume that the matrix process $\left(\mathbf{X}_t^{\mathbf{M}};t\ge 0\right)=\left(\left(\mathbf{B}_t+t\mathbf{M}\right)^*\left(\mathbf{B}_t+t\mathbf{M}\right);t\ge 0\right)$ is started from the zero matrix. Then, its eigenvalue evolution $\left(\mathsf{eval}\left(\mathbf{X}_t^{\mathbf{M}}\right);t\ge 0\right)$ is Markovian with transition density with respect to Lebesgue measure in $\mathbb{W}_N$ given by, where $\nu=K-N$:
\begin{align}\label{CondBesselDriftsTransDens}
 q_t^{(\nu),N,\mathbf{M}}(x,y)=q_t^{(\nu),N,\mu}(x,y)=e^{-\frac{1}{2}\sum_{i=1}^N\mu_i t}\frac{\det\left(\phi^{(\nu)}_{\frac{\mu_i}{2}}(y_j)\right)_{i,j=1}^N}{\det\left(\phi^{(\nu)}_{\frac{\mu_i}{2}}(x_j)\right)_{i,j=1}^N}\det\left(q_t^{(\nu)}(x_i,y_j)\right)_{i,j=1}^N.
\end{align}
\end{thm}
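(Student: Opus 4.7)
The plan is to reduce to the case of diagonal $\mathbf{M}$, rewrite the claimed kernel as a Doob $h$-transform of the Laguerre eigenvalue semigroup, and identify the eigenvalue process via a Rogers-Pitman intertwining based on the rectangular Harish-Chandra-Itzykson-Zuber (HCIZ) integral. A singular value decomposition $\mathbf{M}=\mathbf{U}_0\mathbf{D}\mathbf{V}_0^*$ combined with the left/right unitary invariance of $\mathbf{B}_t$ yields $\mathsf{eval}(\mathbf{X}_t^{\mathbf{M}})\stackrel{d}{=}\mathsf{eval}(\mathbf{X}_t^{\mathbf{D}})$, so we may take $\mathbf{M}=\mathbf{D}$, the $K\times N$ ``rectangular diagonal'' matrix with entries $\sqrt{\mu_i}$. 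The formula \eqref{CondBesselDriftsTransDens} then rewrites as
\begin{align*}
q_t^{(\nu),N,\mu}(x,y)\,=\,e^{-\frac{t}{2}\sum_i\mu_i}\,\frac{h_\mu(y)}{h_\mu(x)}\,q_t^{(\nu),N}(x,y),\qquad h_\mu(z)\,:=\,\frac{\det(\phi^{(\nu)}_{\mu_i/2}(z_j))_{i,j=1}^N}{\mathsf{\Delta}_N(z)},
\end{align*}
exhibiting it as a Doob $h$-transform of \eqref{LaguerreEvalSemi}. That this is a genuine Markov semigroup on $\mathbb{W}_N$ follows from the one-dimensional eigenfunction identity $\int q^{(\nu)}_s(x,y)\phi^{(\nu)}_{\mu_i/2}(y)\,dy=e^{s\mu_i/2}\phi^{(\nu)}_{\mu_i/2}(x)$ (since $\phi^{(\nu)}_{\mu_i/2}$ is an eigenfunction of the squared Bessel generator of index $\nu$ with eigenvalue $\mu_i/2$) combined with the Andr\'eief/de Bruijn determinantal identity, which together yield Chapman-Kolmogorov and total mass one.

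The core step is to identify $(\mathsf{eval}(\mathbf{X}_t^{\mathbf{M}}))_{t\ge 0}$ with this Markov semigroup, via a Rogers-Pitman intertwining applied to the matrix process $\mathbf{Z}_t:=\mathbf{B}_t+t\mathbf{M}$ (with semigroup $P_t^{\mathbf{M}}$) and the map $\phi:\mathbf{Z}\mapsto\mathsf{eval}(\mathbf{Z}^*\mathbf{Z})$. Take as intertwining kernel $\Lambda^{\mu}:\mathbb{W}_N\to\textnormal{Mat}_{\mathbb{C}}(K,N)$, supported on singular value fibers and weighted by the drift:
\begin{align*}
\Lambda^{\mu}(\lambda,d\mathbf{Z})\,\propto\,e^{\mathrm{Re}\,\mathrm{tr}(\mathbf{M}^*\mathbf{Z})}\,d\sigma_\lambda(\mathbf{Z}),
\end{align*}
where $\sigma_\lambda$ is the pushforward of Haar measure on $U(K)\times U(N)$ under $(U,V)\mapsto U\Sigma_\lambda V^*$, with $\Sigma_\lambda$ the rectangular diagonal with entries $\sqrt{\lambda_j}$. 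Its normalizing constant, computed by a single application of the rectangular HCIZ integral, is a scalar multiple of $h_\mu(\lambda)/\mathsf{\Delta}_N(\mu)$. Combining this with the Cameron-Martin identity $P_t^{\mathbf{M}}f(\mathbf{Z})=e^{-\mathrm{Re}\,\mathrm{tr}(\mathbf{M}^*\mathbf{Z})-\frac{t}{2}\|\mathbf{M}\|^2}P_t^{0}[e^{\mathrm{Re}\,\mathrm{tr}(\mathbf{M}^*\cdot)}f](\mathbf{Z})$ and the known intertwining of $P_t^0$ with $q_t^{(\nu),N}$ via uniform fiber averaging (which underlies \cite{OConnell}), the desired identity $\Lambda^{\mu}P_t^{\mathbf{M}}=Q_t^{(\nu),N,\mu}\Lambda^{\mu}$ reduces---after a second application of rectangular HCIZ---to the Doob $h$-transform formula written above. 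Since $\mathbf{Z}_0=0$ is consistent with $\Lambda^{\mu}(0,\cdot)=\delta_0$, the Rogers-Pitman criterion then delivers both Markovianity and the claimed transition kernel.

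The main obstacle is the careful verification of the intertwining $\Lambda^{\mu}P_t^{\mathbf{M}}=Q_t^{(\nu),N,\mu}\Lambda^{\mu}$: it amounts to interchanging a drift-tilted fiber integral with the undrifted matrix heat semigroup and collapsing the resulting Bessel-function determinants using Andr\'eief's identity---essentially gluing two rectangular HCIZ evaluations through the eigenfunction property of $\phi^{(\nu)}_{\mu_i/2}$ for the squared Bessel semigroup. A secondary but necessary point is reconciling the entrance law $q_t^{(\nu),N,\mu}(0^+,y)$ with the non-central Wishart eigenvalue density of James, via the small-argument expansion $\det(\phi^{(\nu)}_{\mu_i/2}(x_j))\sim c\,\mathsf{\Delta}_N(\mu)\,\mathsf{\Delta}_N(x)$ as $x\to 0$, which handles the degenerate initial condition in the Rogers-Pitman hypothesis.
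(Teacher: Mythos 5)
Your proposal is correct and rests on the same core ingredients as the paper's proof: Cameron--Martin/Girsanov for the drifted matrix Brownian motion, bi-unitary invariance, and the rectangular Harish--Chandra--Itzykson--Zuber integral. Where it differs is purely in packaging. The paper's proof of this theorem is a direct path-space computation: apply Girsanov at the matrix level, observe that the resulting Radon--Nikodym density can be averaged over $\mathbb{U}(K)\times\mathbb{U}(N)$ without changing the expectation (because the integrand depends only on $\mathsf{eval}(\mathbf{X}_t^{\mathbf{0}})$), and evaluate the double unitary integral via the rectangular HCIZ formula to exhibit $\left(\mathsf{eval}(\mathbf{X}_t^{\mathbf{M}})\right)$ as a Doob $h$-transform of $\left(\mathsf{eval}(\mathbf{X}_t^{\mathbf{0}})\right)$. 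Markovianity and the transition kernel then follow at once from the known Markovianity of the undrifted eigenvalue process. You instead build an explicit drift-tilted fiber kernel $\Lambda^{\mu}$ and verify the Rogers--Pitman intertwining $\Lambda^{\mu}P_t^{\mathbf{M}}=Q_t^{(\nu),N,\mu}\Lambda^{\mu}$; this is precisely the strategy the paper uses for the matrix-level statement (Proposition \ref{MatrixTheorem}), pushed one level further to eigenvalues, which is why the Brezin--Gross--Witten integral over $\mathbb{U}(K)$ alone is replaced by the full $\mathbb{U}(K)\times\mathbb{U}(N)$ rectangular HCIZ integral. Your route buys an explicit intertwining kernel and a cleaner statement of the filtering structure; the paper's route avoids having to separately check the semigroup property, the intertwining identity, and the degenerate initial-condition hypothesis, since these come for free from the $h$-transform formulation. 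Both are sound; I would only caution that the ``known intertwining of $P_t^0$ with $q_t^{(\nu),N}$ via uniform fiber averaging'' you invoke needs a two-step decomposition (matrix $\to$ positive-definite matrix via $\mathbb{U}(K)$-fiber, then $\to$ eigenvalues via $\mathbb{U}(N)$-conjugation-fiber), rather than being directly quotable from \cite{OConnell}, and that the entrance-law reconciliation you flag as ``secondary'' is indeed a necessary verification in the Rogers--Pitman hypothesis when starting from the boundary point $x=0$.
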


\begin{rmk}
In case of coinciding coordinates for $\mu=\mathsf{eval}\left(\mathbf{M}^*\mathbf{M}\right)$ the ratio of determinants in formula (\ref{CondBesselDriftsTransDens}) is interpreted using L'H\^{o}pital's rule. This is the interpretation we use for ratios of vanishing determinants that we may encounter in the sequel and we do not mention it explicitly again.
\end{rmk}

\begin{rmk}
We will also study the evolution of the matrix process $\left(\mathbf{X}_t^{\mathbf{M}};t\ge 0\right)$ itself in Proposition \ref{MatrixTheorem} in Section \ref{SectionMatrix}. This gives yet another generalization of the Rogers-Pitman theorem \cite{RogersPitman}. In the real symmetric case, namely for the Wishart process, an analogue of this result was proven in \cite{WishartDrifts}. As far as we are aware in the real symmetric case there is no analogue of Theorem \ref{EigenvalueTheorem} for the evolution of the eigenvalues.
\end{rmk}

\subsection{Diffusions conditioned to never intersect}
Let $\delta \ge 2$. 
The squared Bessel process with dimension $\delta$ and index $\nu=\frac{\delta}{2}-1$ is given by the unique strong solution $\left(\mathsf{x}(t);t\ge 0\right)$ of the SDE in $(0,\infty)$:
\begin{align*}
 d\mathsf{x}(t)=2\sqrt{\mathsf{x}(t)}d\mathsf{w}(t)+\delta dt=2\sqrt{\mathsf{x}(t)}d\mathsf{w}(t)+2\left(\nu+1\right) dt,
\end{align*}
where $\mathsf{w}$ is a standard Brownian motion. In this paper we call this the $\textnormal{BESQ}(\delta)$ process. It is well-known, see \cite{RevuzYor,SurveyBessel,PitmanYor}, that $0$ is an entrance boundary point (we can start the process there and is never reached again) for the $\textnormal{BESQ}(\delta)$ process for dimension $\delta\ge 2$. Its transition density with respect to the Lebesgue measure in $(0,\infty)$ is given by (\ref{BESQdensity}).

It is possible to consider a certain generalization of the $\textnormal{BESQ}(\delta)$ process depending on a parameter $\lambda\ge 0$ so that for $\lambda=0$ we get back $\textnormal{BESQ}(\delta)$ process. This can be defined as the Doob $h$-transform \cite{Doob} of the $\textnormal{BESQ}(\delta)$ process by $\phi_\lambda^{(\nu)}$, see for example \cite{Watanabe,PitmanYor}, or equivalently as the unique strong solution $\left(\mathsf{x}^{\lambda}(t);t\ge 0\right)$ to the SDE in $(0,\infty)$:
\begin{align*}
   d\mathsf{x}^{\lambda}(t)=2\sqrt{\mathsf{x}^{\lambda}(t)}d\mathsf{w}(t)+\left[2\left(\nu+1\right)+2\sqrt{2\lambda\mathsf{x}^{\lambda}(t)}\frac{I_{\nu+1}\left(\sqrt{2\lambda\mathsf{x}^{\lambda}(t)}\right)}{I_{\nu}\left(\sqrt{2\lambda\mathsf{x}^{\lambda}(t)}\right)}\right]dt. 
\end{align*}
As before, $0$ is an entrance boundary point for this diffusion (as long as $\delta\ge 2$, equivalently $\nu\ge 0$). We call this the $\textnormal{BESQ}_{\lambda}(\delta)$ process and call $\lambda$ the (generalized) drift parameter. These diffusions (or more precisely their square root analogue) were introduced in a seminal paper \cite{Watanabe} by Watanabe investigating invariance of one-dimensional diffusions under the time inversion $t\rightsquigarrow \frac{1}{t}$. They were called Bessel diffusion processes in the wide sense there. A deep study of these processes was then undertaken by Pitman and Yor in \cite{PitmanYor}. They also come up in the study of exponential functionals of Brownian motion by Matsumoto and Yor \cite{MatsumotoYor}, see also \cite{RiderValko,OConnellPosDef} for some interesting matrix extensions of closely related processes.

In this paper we study the probability of collision of independent $\textnormal{BESQ}_{\lambda_i}(\delta)$ diffusions. More precisely we show that if $N$ independent $\textnormal{BESQ}_{\lambda_i}(\delta)$ diffusions with strictly ordered drift parameters ($\lambda_1<\cdots<\lambda_N$) are started from strictly ordered ($x_1<\cdots<x_N$) initial locations then the probability that they never intersect is strictly positive and given by a very simple explicit formula. This turns out to be a special case of a general phenomenon for a certain class of one-dimensional diffusions that we study in Section \ref{SectionConditioned}. From this we will easily deduce the following interpretation for the eigenvalues of the matrix process considered in Theorem \ref{EigenvalueTheorem}.

\begin{thm}\label{ConditionedBesselProcess}
Let $x,\mu \in \mathbb{W}_N$ and $\delta\ge 2$. Consider $N$ independent $\textnormal{BESQ}_{\frac{\mu_1}{2}}(\delta), \dots, \textnormal{BESQ}_{\frac{\mu_N}{2}}(\delta)$ diffusions conditioned to never intersect starting from $x$. Then, the transition density with respect to Lebesgue measure in $\mathbb{W}_N$ of the conditioned process is given by $q_t^{(\nu),N,\mu}$ from (\ref{CondBesselDriftsTransDens}) where $\nu=\frac{\delta}{2}-1$.
\end{thm}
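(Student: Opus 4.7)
The plan is to specialize the general framework developed in Section \ref{SectionConditioned} for non-intersecting ensembles obtained from independent Doob $h$-transforms of a common base diffusion. The key structural input is that $\phi_\lambda^{(\nu)}$ is an eigenfunction of the $\textnormal{BESQ}(\delta)$ semigroup with eigenvalue $\lambda$, equivalently
\[
q_t^{(\nu),\lambda}(x,y) \;=\; e^{-\lambda t}\,\frac{\phi^{(\nu)}_{\lambda}(y)}{\phi^{(\nu)}_{\lambda}(x)}\,q_t^{(\nu)}(x,y),
\]
so that each $\textnormal{BESQ}_\lambda(\delta)$ is a Doob $h$-transform of the single base $\textnormal{BESQ}(\delta)$ process.

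First I would compute the joint sub-transition kernel on $\mathbb{W}_N$ of the $N$ drifted particles killed upon first collision. Rewriting the $N$-fold drifted law as a product of $N$ independent Doob transforms of identical $\textnormal{BESQ}(\delta)$ diffusions, the Radon--Nikodym derivative factorises, and applying the classical Karlin--McGregor non-intersection identity on the base (identical) side yields
\[
Q_t^{\mathrm{killed}}(x,y) \;=\; \prod_{i=1}^{N}\frac{\phi^{(\nu)}_{\mu_i/2}(y_i)}{\phi^{(\nu)}_{\mu_i/2}(x_i)}\,e^{-\mu_i t/2}\,\det\bigl(q_t^{(\nu)}(x_i,y_j)\bigr)_{i,j=1}^{N}.
\]
Next I would exhibit the candidate harmonic function
\[
h(x) \;=\; \frac{\det\bigl(\phi^{(\nu)}_{\mu_i/2}(x_j)\bigr)_{i,j=1}^N}{\prod_i\phi^{(\nu)}_{\mu_i/2}(x_i)}.
\]
It vanishes on $\partial\mathbb{W}_N$ (two equal columns), is strictly positive on $\mathbb{W}_N$ for $\mu\in\mathbb{W}_N$ via a Cauchy--Binet expansion of the power series $\phi_\lambda^{(\nu)}(x)=\sum_{k\ge 0}a_k(\lambda x)^k$ with $a_k>0$ (reducing positivity to that of generalized Vandermonde determinants), and is harmonic for the joint killed semigroup by a short Andreief computation: $\int Q_t^{\mathrm{killed}}(x,y)h(y)\,dy$ reduces to $\det\!\bigl(\int\phi^{(\nu)}_{\mu_i/2}(y)q_t^{(\nu)}(x_j,y)\,dy\bigr)=\det\!\bigl(e^{\mu_i t/2}\phi^{(\nu)}_{\mu_i/2}(x_j)\bigr)$, and the eigenvalue exponentials cancel those already in $Q_t^{\mathrm{killed}}$ to recover $h(x)$.

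The Doob $h$-transform formula $\tfrac{h(y)}{h(x)}Q_t^{\mathrm{killed}}(x,y)$ then collapses algebraically to the right-hand side of \eqref{CondBesselDriftsTransDens}. To finish, one identifies this $h$-transform with the process actually conditioned on never intersecting; the standard route is the $T\to\infty$ limit of conditioning on the event $\{\text{no collision on }[0,T]\}$, which by the explicit form of $Q_T^{\mathrm{killed}}$ reduces to large-$T$ asymptotics of the integral $\int_{\mathbb{W}_N}\prod_i\phi^{(\nu)}_{\mu_i/2}(y_i)\det(q_T^{(\nu)}(x_i,y_j))\,dy$, controlled by the strict ordering of the $\mu_i$. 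I expect this final identification step to be the main obstacle, since the explicit formula and the Doob-transform simplification are otherwise routine once the eigenfunction/Karlin--McGregor setup is in place; the general diffusion framework of Section \ref{SectionConditioned} presumably handles it via a minimal-positive-harmonic-function argument and then specializes to $\textnormal{BESQ}_\lambda(\delta)$ here.
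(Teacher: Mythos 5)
Your overall architecture --- Karlin--McGregor killed kernel for the drifted particles, a candidate harmonic function $h$, Doob $h$-transform, then identification with the conditioned process --- matches the Section \ref{SectionConditioned} framework that the paper's proof of Theorem \ref{ConditionedBesselProcess} specializes to $\textnormal{BESQ}_\lambda(\delta)$. Your positivity argument for $\det\left(\phi^{(\nu)}_{\mu_i/2}(x_j)\right)_{i,j=1}^N$ (power series with strictly positive coefficients plus Cauchy--Binet, reducing to positivity of generalized Vandermonde determinants) is genuinely different from the paper's and is a clean alternative in this case: it exploits the explicit analytic form of the $\textnormal{BESQ}$ eigenfunctions, whereas Lemma \ref{StrictPosLem} works for a general class of one-dimensional diffusions via the Andr\'{e}ief identity, strict positivity of the Karlin--McGregor determinant, and a Wronskian/linear-independence contradiction.

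The genuine gap is exactly the step you flag as ``the main obstacle'': identifying the $h$-transform with the process conditioned on $\{\tau_C=\infty\}$. Your guess at its resolution --- a minimal-positive-harmonic-function argument --- is not the mechanism the paper uses, and you leave the large-$T$ asymptotics unresolved. The paper (Proposition \ref{AsymptoticProbability}) proceeds by directly computing $\lim_{t\to\infty}\mathbb{P}_x(\tau_C>t)$ from the Karlin--McGregor formula and showing that the permutation-sum error term vanishes because of the asymptotic ordering condition (\textbf{AO}): independent $\mathsf{L}^{\psi_{\lambda_i}}$-diffusions with strictly ordered drift parameters become asymptotically ordered regardless of their initial configuration. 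For $\textnormal{BESQ}_\lambda(\delta)$ this is verified from Watanabe's strong law $\mathsf{x}^\lambda(t)/t^2 \to 2\lambda$ almost surely, which is the concrete input your write-up is missing. Once $\mathbb{P}_x(\tau_C=\infty)$ is computed and shown strictly positive, the conditioned kernel follows by bounded convergence as $s\to\infty$ in $\mathbb{P}_y(\tau_C>s)/\mathbb{P}_x(\tau_C>t+s)$; without (\textbf{AO}) or an equivalent quantitative asymptotic, your final step does not close.
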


\begin{rmk}
It is essential for the argument we use that the parameters $\lambda_i$ are strictly ordered. One might expect that the interpretation in Theorem \ref{ConditionedBesselProcess} should still hold when the ordering is not strict. This is indeed the case when the $\lambda_i\equiv 0$ from the results of \cite{OConnell}. On the other hand, when the parameters are no longer ordered the situation is much more involved. In the case of Brownian motions with drifts this has been investigated in \cite{Puchala,Garbit}. It would be interesting to extend these results to the $\textnormal{BESQ}$ case or even more ambitiously to the general setting of Section \ref{SectionConditioned}.
\end{rmk}

Finally, we have the following symmetry between the drift parameters and the starting point coordinates. An analogous result holds for conditioned Brownian motions with drifts \cite{JonesOConnell}. This property appears to be rather special and does not seem to extend to the general setting of one-dimensional diffusions considered in Section \ref{SectionConditioned}.
\begin{cor}
Let $x,\mu \in \mathbb{W}_N$ and $\delta\ge 2$. Then, the distribution at fixed time $t=1$ of $N$ independent $\textnormal{BESQ}_{\frac{\mu_1}{2}}(\delta), \dots, \textnormal{BESQ}_{\frac{\mu_N}{2}}(\delta)$ diffusions conditioned to never intersect starting from $x$ is the same as the distribution of $N$ independent $\textnormal{BESQ}_{\frac{x_1}{2}}(\delta), \dots, \textnormal{BESQ}_{\frac{x_N}{2}}(\delta)$ diffusions conditioned to never intersect starting from $\mu$.
\end{cor}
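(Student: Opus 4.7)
The plan is to verify the identity directly from the explicit transition density formula in Theorem \ref{ConditionedBesselProcess}, exploiting a hidden symmetry in $\phi_\lambda^{(\nu)}$. Specifically, substituting $\lambda = \mu_i/2$ into the definition of $\phi_\lambda^{(\nu)}$ gives
$$\phi_{\mu_i/2}^{(\nu)}(x_j) = (\mu_i x_j)^{-\nu/2}\, I_\nu\!\left(\sqrt{\mu_i x_j}\right) =: \psi(\mu_i, x_j),$$
and $\psi(\cdot,\cdot)$ is manifestly symmetric in its two arguments. Writing $D(\alpha,\beta) = \det(\psi(\alpha_i,\beta_j))_{i,j=1}^N$, the matrix with entries $\psi(\alpha_i,\beta_j)$ is the transpose of the one with entries $\psi(\beta_i,\alpha_j)$, so $D(\alpha,\beta) = D(\beta,\alpha)$. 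Because the drift parameter enters as $\mu_i/2$ while $\phi_\lambda^{(\nu)}$ carries a factor of $2\lambda$, this is precisely the scaling that produces the symmetric object $\psi$.

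Next, applying the same identity to the one-dimensional density (\ref{BESQdensity}) at $t=1$ yields
$$q_1^{(\nu)}(x,y) = \tfrac{1}{2}\, y^\nu\, e^{-(x+y)/2}\, \psi(x,y),$$
and factoring the row/column prefactors out of the $N\times N$ determinant gives
$$\det\!\left(q_1^{(\nu)}(x_i,y_j)\right)_{i,j=1}^N = \frac{1}{2^N}\prod_{j=1}^N y_j^\nu \cdot \exp\!\left(-\tfrac{1}{2}\sum_{i=1}^N(x_i+y_i)\right) \cdot D(x,y).$$

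Plugging this expression, together with $\det(\phi_{\mu_i/2}^{(\nu)}(z_j)) = D(\mu,z)$, into formula (\ref{CondBesselDriftsTransDens}) at $t=1$, I would obtain
$$q_1^{(\nu),N,\mu}(x,y) = \frac{1}{2^N}\prod_{j=1}^N y_j^\nu \cdot \exp\!\left(-\tfrac{1}{2}\sum_{i=1}^N(x_i+y_i+\mu_i)\right) \cdot \frac{D(\mu,y)\,D(x,y)}{D(\mu,x)}.$$
The right-hand side is symmetric under $x \leftrightarrow \mu$: the prefactors are obviously symmetric, and $D(\mu,x) = D(x,\mu)$ by the transposition argument above. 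Since by Theorem \ref{ConditionedBesselProcess} the distribution at time $1$ of the conditioned process started from $x$ with drifts $\mu/2$ is $y\mapsto q_1^{(\nu),N,\mu}(x,y)$ and similarly with the roles of $x$ and $\mu$ exchanged, this manifest symmetry is exactly the claimed equality of distributions.

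There is no real obstacle: the entire proof is an algebraic rearrangement once one notices the rescaling that turns $\phi_{\mu_i/2}^{(\nu)}(x_j)$ into a function symmetric in $\mu_i$ and $x_j$. The only subtlety worth flagging is that the argument is special to $t=1$, since a general $t$ would enter the exponential and the Bessel argument asymmetrically, destroying the symmetry; this is consistent with the remark that such a property does not extend to general diffusions.
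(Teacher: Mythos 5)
Your proof is correct and is essentially the same as the paper's: both rewrite $q_1^{(\nu),N,\mu}(x,y)$ into a form that is manifestly invariant under $x\leftrightarrow\mu$, using that $(ab)^{-\nu/2}I_\nu(\sqrt{ab})$ (and hence the resulting determinants) is symmetric in its two arguments. The paper's displayed formula simply carries the simplification one step further, cancelling all the power-law prefactors to leave only $I_\nu$-determinants, but the underlying observation and the role of $t=1$ are identical to yours.
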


\begin{proof}
The symmetry $x\leftrightarrow \mu$ for $t=1$, namely that $q_1^{(\nu),N,\mu}(x,y)=q_1^{(\nu),N,x}(\mu,y)$, is immediate from the explicit formula for the transition density, where $\nu=\frac{\delta}{2}-1$:
\begin{align*}
 q_t^{(\nu),N,\mu}(x,y)=\left(\frac{1}{2t}\right)^{N} \exp\left(-\frac{1}{2}\sum_{i=1}^N\left(\mu_it+\frac{x_i+y_i}{t}\right)\right)  \frac{\det\left(I_\nu\left(\sqrt{\mu_iy_j}\right)\right)_{i,j=1}^N \det\left(I_\nu\left(\frac{\sqrt{x_iy_j}}{t}\right)\right)_{i,j=1}^N}{\det\left(I_\nu\left(\sqrt{\mu_ix_j}\right)\right)_{i,j=1}^N}.
\end{align*}
\end{proof}

\subsection{Interacting diffusions construction}

We now give a third interpretation of the Markov process with transition kernel $q_t^{(\nu),N,\mu}$ as the law of the top row of interacting diffusions in some interlacing array. The type\footnote{There is yet another type of dynamics on interlacing arrays which has been intensely studied in the literature. This has its roots in the combinatorial algorithm of the RSK correspondence, its continuous analogue and generalizations, see for example \cite{OConnellYor,OConnellTams,Toda}. As far as we are aware there is no analogue of these dynamics related to the Laguerre diffusion and its generalization studied here.} \footnote{There is also a natural analogue of these dynamics in the discrete setting, see for example \cite{WarrenWindridge,BorodinFerrari,BorodinOlshanski}.} of dynamics we will consider here involving Brownian motions, interacting via local time terms, only when they collide, in a triangular interlacing array (or continuous Gelfand-Tsetlin pattern) were first introduced and studied by Warren in \cite{Warren}. These are related, but not identical, to the dynamics of eigenvalues of submatrices of Hermitian Brownian motion, see \cite{Warren,Adler,FerrariFringsPartial}. For squared Bessel diffusions (without the generalized drifts) analogous dynamics in triangular interlacing arrays were studied in \cite{InterlacingDiffusions,Sun}. These are related, but again not identical, to the evolution of eigenvalues of submatrices of the Laguerre diffusion $\mathbf{B}_t^*\mathbf{B}_t$. In the Brownian case one could also add drifts to these interacting Brownian motions in the triangular interlacing array and still have analogous results, see \cite{FerrariFrings,InterlacingDiffusions}. This does not appear to be possible in the squared Bessel case however; in particular none of the natural candidates of how to involve these generalized drifts in a triangular array seem to work. Instead, one needs to consider slightly more complicated but still rather natural dynamics in so-called (interlacing) half arrays\footnote{In the discrete setting, when the entries are integers, these half arrays are closely related to symplectic and orthogonal Gelfand-Tsetlin patterns (these are essentially special cases satisfying some extra conditions).} that we define next. 

We need some notation and terminology. We will say that $x\in \mathbb{W}_N$ interlaces with $y\in \mathbb{W}_{N+1}$, and denote this by $x\prec y$, if the following inequalities hold
\begin{align*}
y_1\le x_1 \le y_2 \le x_2 \le \cdots \le y_N\le x_N \le y_{N+1}.    
\end{align*}
We will say that $x\in \mathbb{W}_N$ interlaces with $y\in \mathbb{W}_{N}$, and abusing notation still denote this by $x\prec y$, if the following holds
\begin{align*}
x_1\le y_1 \le x_2 \le y_2 \le \cdots \le x_N \le y_N.    
\end{align*}
We then define the space of (interlacing) half arrays of length $N$, see for example Figure \ref{Figure1} for an illustration,
\begin{align*}
 \mathfrak{HA}_N=\left\{(x^{(1)},\dots,x^{(N)}):x^{(i)}\in \mathbb{W}_{\left\lfloor\frac{i+1}{2} \right\rfloor}, \ x^{(i)}\prec x^{(i+1)} \right\}.
\end{align*}
We call the $x^{(i)}$'s the rows or levels of the array. 

\begin{figure}
\captionsetup{singlelinecheck = false, justification=justified}
\begin{tikzpicture}
 
     \draw[fill] (0,0) circle [radius=0.1];   
     \draw[fill] (0.75,0.75) circle [radius=0.1];  
      \draw[fill] (0,1.5) circle [radius=0.1]; 
    \draw[fill] (1.5,1.5) circle [radius=0.1]; 
          \draw[fill] (0.75,2.25)circle [radius=0.1]; 
    \draw[fill] (2.25,2.25) circle [radius=0.1]; 
              \draw[fill] (0,3) circle [radius=0.1]; 
    \draw[fill] (1.5,3) circle [radius=0.1]; 
          \draw[fill] (3,3)circle [radius=0.1]; 
          
    \node[below right] at (0,0) {$x_1^{(1)}$};
    \node[below right] at (0.75,0.75) {$x_1^{(2)}$};
     \node[below right] at (0,1.5) {$x_1^{(3)}$};
     \node[below right] at (1.5,1.5) {$x_2^{(3)}$};
    \node[below right] at (0.75,2.25) {$x_1^{(4)}$};
     \node[below right] at (2.25,2.25) {$x_2^{(4)}$};
       \node[below right] at (0,3) {$x_1^{(5)}$};
     \node[below right] at (1.5,3) {$x_2^{(5)}$};
  \node[below right] at (3,3) {$x_3^{(5)}$};

 \end{tikzpicture}
 \centering
 \caption{A cartoon representing an element of $\mathfrak{HA}_5$.}\label{Figure1}
 \end{figure}
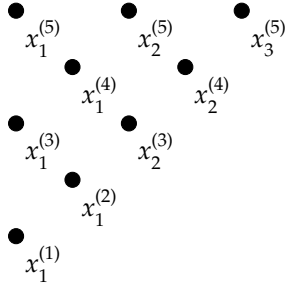

We now consider the following system of reflecting SDEs in $\mathfrak{HA}_{2N-1}$
\begin{align}\label{DynamicsBessel}
d\mathsf{x}_i^{(k)}(t)&=2\sqrt{\mathsf{x}_i^{(k)}(t)}d\mathsf{w}^{(k)}_i(t)+\mathcal{B}^{(k),\nu}\left(\mathsf{x}_i^{(k)}(t)\right)dt+\frac{1}{2}d\mathfrak{l}^{(k),+}_i(t)-\frac{1}{2}d\mathfrak{l}^{(k),-}_i(t),
\end{align}
with $1\le k \le 2N-1$, $1\le i \le \left \lfloor \frac{k+1}{2}\right \rfloor$ and where the $\mathsf{w}_i^{(k)}$ are independent standard Brownian motions, the level dependent drift term $\mathcal{B}^{(k),\nu}$ is given by
\begin{align*}
\mathcal{B}^{(k),\nu}(x)=\begin{cases}
2(\nu+1)+2\sqrt{\mu_n x}\frac{I_{\nu+1}\left(\sqrt{\mu_n x}\right)}{I_\nu\left(\sqrt{\mu_n x}\right)}, &\textnormal{ if }k=2n-1,\\
-2\nu-2\sqrt{\mu_{n+1} x}\frac{I_{\nu+1}\left(\sqrt{\mu_{n+1} x}\right)}{I_\nu\left(\sqrt{\mu_{n+1} x}\right)},  &\textnormal{ if }k=2n,
\end{cases}
\end{align*}
while the finite variation terms $\mathfrak{l}_i^{(k),\pm }$, increasing only when particles from consecutive levels collide in order to keep them ordered and satisfy the interlacing, can be identified with the semimartingale local times:
\begin{align*}
\mathfrak{l}_i^{(k),+}=\begin{cases}
\textnormal{ sem. loc. time of } \mathsf{x}_i^{(k)}-\mathsf{x}^{(k-1)}_{i-1} \textnormal{ at 0 },&\textnormal{ if }k=2n-1,\\
\textnormal{ sem. loc. time of } \mathsf{x}_i^{(k)}-\mathsf{x}^{(k-1)}_{i}\textnormal{ at 0 },&\textnormal{ if } k=2n,
\end{cases}\\
\mathfrak{l}_i^{(k),-}=\begin{cases}
\textnormal{ sem. loc. time of } \mathsf{x}_i^{(k)}-\mathsf{x}^{(k-1)}_{i}\textnormal{ at 0 },&\textnormal{ if }k=2n-1,\\
\textnormal{ sem. loc. time of } \mathsf{x}_i^{(k)}-\mathsf{x}^{(k-1)}_{i+1}\textnormal{ at 0 },&\textnormal{ if } k=2n.
\end{cases}
\end{align*}
The terms for which the indices underflow or overflow above are identically zero. See Section \ref{SectionInteractingDiffusions}, also Figure \ref{Figure2} therein, for more details on these dynamics.

The reader will notice that if for a moment we forget the local time interactions when particles collide then the SDEs on odd levels $2n-1$ are simply those of $n$ independent $\textnormal{BESQ}_{\frac{\mu_n}{2}}(\delta)$ processes. The SDEs on even levels $2n$ on the other hand might seem a bit mysterious, however as we see in Section \ref{SectionInteractingDiffusions} they are rather natural as they describe the so-called dual diffusions to the $\textnormal{BESQ}_{\frac{\mu_{n+1}}{2}}(\delta)$ process.

We can now state the following theorem. This is a special case of an analogous result, presented in Section \ref{SectionProofInteracting}, for a general class of one-dimensional diffusions for which the $\textnormal{BESQ}(\delta)$ process is the prototypical example.

\begin{thm}\label{InteractingDiffusionsThm}
Let $N\ge 1$ and $\mu_1<\cdots<\mu_N$. Let $\delta\ge 2$ with $\nu=\frac{\delta}{2}-1$. Suppose $\mathcal{M}$ is a probability measure on $\mathbb{W}_N$ and let $\mathfrak{GM}^{\mathcal{M}}_{2N-1}$ be the probability measure on $\mathfrak{HA}_{2N-1}$ given in Definition \ref{DefinitionGibbs}. Assume that the SDEs (\ref{DynamicsBessel}) are initialized according to $\mathfrak{GM}^{\mathcal{M}}_{2N-1}$. Then, the projection on the top level $\left(\mathsf{x}^{(2N-1)};t\ge 0\right)$ is distributed as a Markov process, initialized according to $\mathcal{M}$, having transition density $q_t^{(\nu),N,\mu}$ from (\ref{CondBesselDriftsTransDens}).
\end{thm}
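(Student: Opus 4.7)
The theorem is the specialization to the $\textnormal{BESQ}(\delta)$ case of a general statement for a class of one-dimensional diffusions to be established in Section \ref{SectionProofInteracting}. The plan, which traces back to Rogers--Pitman \cite{RogersPitman} and in the interlacing array setting to Warren \cite{Warren}, is to identify $\mathfrak{GM}^{\mathcal{M}}_{2N-1}$ as a Markov intertwining kernel and to verify an intertwining between the generator of the array dynamics and that of the claimed top-row semigroup. Once this intertwining is in place, Markovianity of the projection together with the transition density $q_t^{(\nu),N,\mu}$ follows from a Rogers--Pitman type criterion adapted to the present SDE setting as in \cite{InterlacingDiffusions}.

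\textbf{Setting up the intertwining.} First I would decompose
\begin{align*}
\mathfrak{GM}^{\mathcal{M}}_{2N-1}(d\mathbf{x})=\mathcal{M}\left(dx^{(2N-1)}\right)\Lambda^{\mu}\left(x^{(2N-1)},d\left(x^{(1)},\dots,x^{(2N-2)}\right)\right),
\end{align*}
where $\Lambda^{\mu}$ is a Markov kernel from $\mathbb{W}_N$ into $\mathfrak{HA}_{2N-1}$, to be read off Definition \ref{DefinitionGibbs}. Structurally $\Lambda^{\mu}$ factors as a product of determinantal densities over consecutive rows, each of Karlin--McGregor type and built from the functions $\phi^{(\nu)}_{\mu_i/2}$ and the squared Bessel density $q_t^{(\nu)}$. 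I would verify that $\Lambda^{\mu}$ is stochastic and that projecting the product structure back onto the top row recovers the ratio of determinants in (\ref{CondBesselDriftsTransDens}) via an iterated Cauchy--Binet computation.

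\textbf{Generator intertwining.} Let $\mathcal{L}^{\mathfrak{HA}}$ denote the generator of the reflecting SDE system (\ref{DynamicsBessel}), including the local time terms $\mathfrak{l}_i^{(k),\pm}$, and let $\mathcal{L}^{(\nu),\mu}$ denote the generator of the semigroup with density $q_t^{(\nu),N,\mu}$; the latter is the Doob $h$-transform by $\det\left(\phi^{(\nu)}_{\mu_i/2}(x_j)\right)$ of $N$ independent $\textnormal{BESQ}(\delta)$'s together with the killing factor $e^{-\frac{1}{2}\sum_i\mu_i t}$. The core identity to establish is
\begin{align*}
\mathcal{L}^{\mathfrak{HA}}\left(\Lambda^{\mu}f\right)=\Lambda^{\mu}\left(\mathcal{L}^{(\nu),\mu}f\right)\qquad\text{for smooth } f:\mathbb{W}_N\to\mathbb{R}.
\end{align*}
On odd levels the free drift $\mathcal{B}^{(2n-1),\nu}$ is exactly that of $\textnormal{BESQ}_{\mu_n/2}(\delta)$, while on even levels $\mathcal{B}^{(2n),\nu}$ is the dual diffusion; it is this primal--dual structure that allows the boundary contributions produced by the local times at the interlacing edges to telescope level-by-level when composed with $\Lambda^{\mu}$, leaving only the desired top-row action.

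\textbf{Main obstacle.} The delicate step is the pairwise primal--dual intertwining between consecutive levels, which must combine with a careful analysis of the reflection local times to show that the Gibbs conditional distribution is preserved under the dynamics. Even for a single pair of rows, one must carefully interpret the action of $\mathcal{L}^{\mathfrak{HA}}$ on $\Lambda^{\mu}f$, isolate the singular contributions from $\mathfrak{l}^{(k),\pm}_i$ at the interlacing boundaries, and check that they match the Doob $h$-transform structure of the target generator. This ultimately reduces to an explicit identity for $\phi^{(\nu)}_{\lambda}$ and the drift coefficient
\begin{align*}
x\mapsto 2(\nu+1)+2\sqrt{2\lambda x}\,\frac{I_{\nu+1}\left(\sqrt{2\lambda x}\right)}{I_{\nu}\left(\sqrt{2\lambda x}\right)},
\end{align*}
which in turn follows from the Bessel differential equation satisfied by $I_{\nu}$. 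With this pairwise identity secured, the multi-level intertwining assembles by the telescoping argument of \cite{Warren,InterlacingDiffusions}, and an application of the Rogers--Pitman criterion yields Theorem \ref{InteractingDiffusionsThm}. In the abstract treatment of Section \ref{SectionProofInteracting} the $\textnormal{BESQ}(\delta)$-specific step is replaced by the hypothesis that the underlying one-dimensional diffusion admits an appropriate dual, which is the natural generalization needed for the whole machinery to go through.
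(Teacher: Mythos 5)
Your proposal captures the right high-level architecture: interpret $\mathfrak{GM}^{\mathcal{M}}_{2N-1}$ as a Markov intertwining kernel from the top row to the array, prove a pairwise primal--dual intertwining between consecutive levels exploiting that even-level drifts define the diffusion dual to the odd-level one, and assemble by a telescoping induction. This is indeed the strategy the paper follows via Propositions \ref{TwoLevel1}, \ref{TwoLevel2} and \ref{GeneralInteracting}. However, there are several substantive gaps.

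First, the structure of the kernel $\Lambda^{\mu}$ is misdescribed. The one-step links $\mathfrak{L}_{i+1,i}$ in Definition \ref{DefinitionGibbs} are not built from transition densities $q_t^{(\nu)}$ nor verified stochastic by ``iterated Cauchy--Binet''; they are normalized Lebesgue-type kernels on interlacing slabs weighted by speed-measure densities $\mathfrak{m}^{\psi_{\lambda_{n+1}}}$, $\widehat{\mathfrak{m}^{\psi_{\lambda_{n+1}}}}$, and the normalization relies on the explicit integration identities of Lemmas \ref{IntProp1}--\ref{IntProp2}/Proposition \ref{PropIntegration}, whose proof hinges on the general eigenfunction relation of Lemma \ref{EigenRel}. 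That identity --- a speed-measure integral formula for ratios $\psi_{\lambda_1}/\psi_{\lambda_2}$ --- is the real algebraic heart, not a Bessel ODE manipulation, and it is what makes the consistent construction across all levels possible. Second, you propose to verify directly the generator-level intertwining $\mathcal{L}^{\mathfrak{HA}}\left(\Lambda^{\mu}f\right)=\Lambda^{\mu}\left(\mathcal{L}^{(\nu),\mu}f\right)$ including the local-time terms. Carried out honestly this would amount to re-deriving from scratch the two-level coupling theorems of \cite{InterlacingDiffusions} (their Theorem 2.19 and Section 2.4), which resolve exactly the boundary/local-time bookkeeping that you flag as the ``main obstacle''; the paper's proof instead cites those results and feeds them the correct eigenfunctions $\mathsf{\Psi}^{(n)}$, $\widetilde{\mathsf{\Psi}}^{(n)}$, $\overline{\mathsf{\Psi}}^{(n)}$ and Lemma \ref{SemigroupObs}. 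Third --- and this is the part that is actually specific to the theorem as stated --- you omit the verification of the (\textbf{YW}) regularity hypothesis required to apply Proposition \ref{GeneralInteracting} in the $\textnormal{BESQ}(\delta)$ case, namely that the Doob-transform drift $x\mapsto\sqrt{x}\,I_{\nu+1}(\sqrt{x})/I_{\nu}(\sqrt{x})$ is globally Lipschitz. Once the general result is assumed, the entire content of the proof of Theorem \ref{InteractingDiffusionsThm} is precisely this check (via the derivative identity and the boundedness of $I_{\nu-1}I_{\nu+1}/I_{\nu}^2$), together with the boundary classification for $\textnormal{BESQ}(\delta)$, so leaving it out means you have not actually proven the stated theorem.
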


\begin{rmk}
In fact, information is available about the evolution of each level and also about the distribution of the whole array at a fixed time $T\ge 0$, see Section \ref{SectionProofInteracting} for more details.
\end{rmk}

\begin{rmk}\label{RemarkDegenerate}
It is also possible to consider, in the general setting of one-dimensional diffusions, a degenerate case when all the parameters are zero, see Section \ref{SectionDegenerate} for more details. A discrete analogue for birth and death chains appeared in \cite{Randomgrowth}.
\end{rmk}

\begin{rmk}\label{RemarkEntrance}
It is also possible to start the dynamics from the origin, when all the coordinates coincide, using an entrance law, see Section \ref{SectionEntrance} for more details.
\end{rmk}

\begin{rmk}\label{RemarkEdge}
Observe that, see Figure \ref{Figure2} in Section \ref{SectionProofInteracting} for an illustration, the rightmost coordinates $\left(\mathsf{x}^{(1)}_1,\mathsf{x}_1^{(2)},\dots,\mathsf{x}^{(2N-2)}_{N-1},\mathsf{x}^{(2N-1)}_{N}\right)$ in the SDEs form an autonomous interacting particle system. This is the analogue in this setting of the well-known model of Brownian motions with one-sided collisions, equivalently Brownian last passage percolation. The result above then implies the analogues of classical results due to Baryshnikov, Gravner-Tracy-Widom, O'Connell-Yor and Bougerol-Jeulin for the Brownian model, see for example \cite{Baryshnikov,GTW,OConnellYor,BougerolJeulin,OConnellTams,BBO,Warren} and the references therein. We discuss this in more detail in Section 
\ref{SectionEdge}.
\end{rmk}

\paragraph{Organisation of the paper} In Section \ref{SectionMatrix} we prove Theorem \ref{EigenvalueTheorem}. In Section \ref{SectionConditioned} we prove Theorem \ref{ConditionedBesselProcess} in the more general setting of a class of one-dimensional diffusions. Then, in Section \ref{SectionInteractingDiffusions} we prove Theorem \ref{InteractingDiffusionsThm} in a more general setting of a class of interacting one-dimensional diffusions. In Section \ref{FurtherInteractingSection} we elaborate on the results mentioned in Remarks \ref{RemarkDegenerate}-\ref{RemarkEdge}. The arguments in Sections \ref{SectionMatrix}, \ref{SectionConditioned} and \ref{SectionInteractingDiffusions}/\ref{FurtherInteractingSection} (together) are essentially independent of each other and can be read separately. 

\paragraph{Acknowledgements} I am very grateful to Jon Warren and Neil O'Connell for useful discussions. I am also very grateful to two anonymous referees for a careful reading of the paper and many useful comments and suggestions which have improved the paper.

\section{The matrix process and its eigenvalues proofs}\label{SectionMatrix}

We now prove Theorem \ref{EigenvalueTheorem}. The idea is the following. We want to relate the eigenvalues of $\left(\mathbf{X}_t^{\mathbf{M}};t\ge 0\right)$ to the eigenvalues of $\left(\mathbf{X}_t^{\mathbf{0}};t\ge 0\right)$. Towards this end we apply Girsanov's theorem on the matrix level. By invariance of matrix Brownian motion, and since we are looking at the eigenvalues, the formula we obtain remains true even if we multiply $\mathbf{B}_t$ to the left and to the right by arbitrary unitary matrices and integrate over the corresponding groups with respect to Haar measure. Remarkably this matrix integral is known to have an explicit evaluation which depends on $\mathbf{B}_t$ only through the eigenvalues of $\mathbf{B}_t^*\mathbf{B}_t=\mathbf{X}^{\mathbf{0}}_t$ and the desired conclusion follows.

\begin{proof}[Proof of Theorem \ref{EigenvalueTheorem}] We start by applying Girsanov's theorem \cite{RevuzYor} with a non-negative functional $\mathcal{F}$:
\begin{align*}
 \mathbb{E}\left[\mathcal{F}\left(\mathbf{X}_s^{\mathbf{M}};s\le t\right)\right]=  \mathbb{E}\left[\mathcal{F}\left(\mathbf{X}_s^{\mathbf{0}};s\le t\right)\exp\left(\frac{1}{2}\textnormal{Tr}\left(\mathbf{M}\mathbf{B}^*_t+\mathbf{B}_t\mathbf{M}^*\right)\right)\exp\left(-\frac{1}{2}\textnormal{Tr}\left(\mathbf{M}^*\mathbf{M}\right)t\right)\right],
\end{align*}
since we have $\textnormal{Tr}\left(\mathbf{M}^*\mathbf{M}\right)=\sum_{i=1}^K\sum_{j=1}^N\left(\Re\mathbf{M}_{ij}\right)^2+\left(\Im\mathbf{M}_{ij}\right)^2$ and also
\begin{align*}
 \frac{1}{2}\textnormal{Tr}\left(\mathbf{M}\mathbf{B}^*_t+\mathbf{B}_t\mathbf{M}^*\right)=\sum_{i=1}^K\sum_{j=1}^N \Re \mathbf{M}_{ij}\mathsf{w}_{ij}(t)+\Im \mathbf{M}_{ij}\tilde{\mathsf{w}}_{ij}(t).
\end{align*}
We now make use of unitary invariance of matrix Brownian motion, recall that $\left(\mathbf{B}_t;t\ge 0\right)$ is starting from the zero matrix, to get that, where $\mathbb{U}(n)$ is the group of $n\times n$ unitary matrices
\begin{align*}
\left(\mathbf{U}\mathbf{B}_t;t\ge 0\right)&\overset{\textnormal{d}}{=}\left(\mathbf{B}_t;t\ge 0\right), \ \ \forall \  \mathbf{U}\in \mathbb{U}(K),\\
\left(\mathbf{B}_t\mathbf{V};t\ge 0\right)&\overset{\textnormal{d}}{=}\left(\mathbf{B}_t;t\ge 0\right), \ \ \forall \ \mathbf{V}\in \mathbb{U}(N).
\end{align*}
Thus, for any $\mathbf{U}\in \mathbb{U}(K), \mathbf{V}\in \mathbb{U}(N)$ we have
\begin{align*}
 \mathbb{E}\left[\mathcal{F}\left(\mathbf{X}_s^{\mathbf{M}};s\le t\right)\right]=e^{-\frac{1}{2}\textnormal{Tr}\left(\mathbf{M}^*\mathbf{M}\right)t}\mathbb{E}\left[\mathcal{F}\left(\mathbf{V}^*\mathbf{X}_s^{\mathbf{0}}\mathbf{V};s\le t\right)\exp\left(\frac{1}{2}\textnormal{Tr}\left(\mathbf{M}\mathbf{V}^*\mathbf{B}^*_t\mathbf{U}^*+\mathbf{U}\mathbf{B}_t\mathbf{V}\mathbf{M}^*\right)\right)\right].
\end{align*}

Moreover, since we are interested in the evolution of the eigenvalues, suppose that $\mathcal{F}$ is any non-negative functional that only depends on the matrix through its spectrum. In particular, for such an $\mathcal{F}$ we have, for any $\mathbf{U}\in \mathbb{U}(K), \mathbf{V}\in \mathbb{U}(N)$, since $\mathsf{eval}\left(\mathbf{V}^*\mathbf{X}_s^{\mathbf{0}}\mathbf{V}\right)=\mathsf{eval}\left(\mathbf{X}_s^{\mathbf{0}}\right)$:
\begin{align*}
  \mathbb{E}\left[\mathcal{F}\left(\mathsf{eval}\left(\mathbf{X}_s^{\mathbf{M}}\right);s\le t\right)\right]=e^{-\frac{1}{2}\textnormal{Tr}\left(\mathbf{M}^*\mathbf{M}\right)t}\mathbb{E}\bigg[&\mathcal{F}\left(\mathsf{eval}\left(\mathbf{X}_s^{\mathbf{0}}\right);s\le t\right)\\&\exp\left(\frac{1}{2}\textnormal{Tr}\left(\mathbf{M}\mathbf{V}^*\mathbf{B}^*_t\mathbf{U}^*+\mathbf{U}\mathbf{B}_t\mathbf{V}\mathbf{M}^*\right)\right)\bigg].
\end{align*}
Since this formula holds for arbitrary $\mathbf{U}\in \mathbb{U}(K), \mathbf{V}\in \mathbb{U}(N)$, it remains true even if we integrate over these unitary groups with respect to the corresponding Haar probability measures. In probabilistic terms we take $\mathbf{U}, \mathbf{V}$ independent of each other and of the Brownian matrix $\mathbf{B}_t$ and distributed according to the Haar measure on $\mathbb{U}(K)$ and $\mathbb{U}(N)$ respectively and take expectations. We then use Tonelli's theorem to bring these expectations inside and the resulting double matrix integral can in fact be computed explicitly.

Namely, let $\mathbf{A},\mathbf{C}\in \textnormal{Mat}_{\mathbb{C}}\left(K,N\right)$ with $K\ge N$ and write $a=\mathsf{eval}\left(\mathbf{A}^*\mathbf{A}\right)$ and $c=\mathsf{eval}\left(\mathbf{C}^*\mathbf{C}\right)$. Then, we have the following explicit evaluation of the matrix integral, see \cite{HCRectangular1,HCRectangular2,HCRectangular3},
\begin{align*}
\int_{\mathbb{U}(K)}^{}\int_{\mathbb{U}(N)}^{}\exp\big(\textnormal{Tr}\left(\mathbf{A}\mathbf{V}^*\mathbf{C}^*\mathbf{U}^*+\mathbf{U}\mathbf{C}\mathbf{V}\mathbf{A}^*\right)\big)d\mathbf{V}d\mathbf{U}=\frac{\prod_{p=1}^{N-1}p! \prod_{q=1}^{K-1}q!\det\big(I_{K-N}\big(2\sqrt{a_ic_j}\big)\big)_{i,j=1}^N}{\prod_{r=1}^{K-N-1}r!\mathsf{\Delta}_N(a)\mathsf{\Delta}_N(c)\prod_{i=1}^{N}(a_ic_i)^{\frac{K-N}{2}}},
\end{align*}
where $d\mathbf{V},d\mathbf{U}$ denote the Haar probability measure on $\mathbb{U}(N)$ and $\mathbb{U}(K)$ respectively. 

Hence, we obtain the following formula
\begin{align}
  \mathbb{E}\left[\mathcal{F}\left(\mathsf{eval}\left(\mathbf{X}_s^{\mathbf{M}}\right);s\le t\right)\right]=e^{-\frac{1}{2}\sum_{i=1}^N\mu_i t}\mathbb{E}\left[\mathcal{F}\left(\mathsf{eval}\left(\mathbf{X}_s^{\mathbf{0}}\right);s\le t\right)\mathsf{const}(K;N;\mu)\mathsf{\Phi}_{\mu}\left(\mathsf{eval}\left(\mathbf{X}_t^{\mathbf{0}}\right)\right)\right],
\end{align}
where $\mu=\mathsf{eval}\left(\mathbf{M}^*\mathbf{M}\right)$ and the function $\mathsf{\Phi}_\mu(x)$ and $\mathsf{const}(K;N;\mu)$ are given by, with $\nu=K-N$:
\begin{align*}
\mathsf{\Phi}_\mu(x)&=\frac{\det\left(\phi^{(\nu)}_{\frac{\mu_i}{2}}\left(x_j\right)\right)_{i,j=1}^N}{\mathsf{\Delta}_N\left(x\right)},\\
 \mathsf{const}(K;N;\mu)&= \frac{\prod_{p=1}^{N-1}p! \prod_{q=1}^{K-1}q!2^{N(K-1)}}{\prod_{r=1}^{K-N-1}r!\mathsf{\Delta}_N(\mu)}.
\end{align*}
Thus, $\left(\mathsf{eval}\left(\mathbf{X}_t^{\mathbf{M}}\right);t\ge 0\right)$ can be obtained as a Doob $h$-transform \cite{Doob} from $\left(\mathsf{eval}\left(\mathbf{X}_t^{\mathbf{0}}\right);t\ge 0\right)$. In particular, the evolution of $\left(\mathsf{eval}\left(\mathbf{X}_s^{\mathbf{M}}\right);s\le t\right)$ is Markovian with transition density with respect to the Lebesgue measure in $\mathbb{W}_{N}$ given by the Doob $h$-transformed density
\begin{align*}
 q_t^{(\nu),N,\mathbf{M}}(x,y)&=e^{-\frac{1}{2}\sum_{i=1}^N\mu_i t} \frac{\det\left(\phi^{(\nu)}_{\frac{\mu_i}{2}}(y_j)\right)_{i,j=1}^N\mathsf{\Delta}_N(x)}{\det\left(\phi^{(\nu)}_{\frac{\mu_i}{2}}(x_j)\right)_{i,j=1}^N\mathsf{\Delta}_N(y)}  q_t^{(\nu),N}(x,y)\\
 &=e^{-\frac{1}{2}\sum_{i=1}^N\mu_i t}\frac{\det\left(\phi^{(\nu)}_{\frac{\mu_i}{2}}(y_j)\right)_{i,j=1}^N}{\det\left(\phi^{(\nu)}_{\frac{\mu_i}{2}}(x_j)\right)_{i,j=1}^N}\det\left(q_t^{(\nu)}(x_i,y_j)\right)_{i,j=1}^N.
\end{align*}
\end{proof}

We now go on to prove a result for the matrix process $\left(\mathbf{X}_t^{\mathbf{M}};t\ge 0\right)$ itself. Towards this end denote by $\mathsf{P}_t^{K,N}(\mathbf{X},d\mathbf{Y})$ the transition kernel of $\left(\mathbf{X}_t^{\mathbf{0}};t\ge 0\right)$. This has been computed explicitly in \cite{Demni} and takes the following form (we will not need to use the explicit formula here)
\begin{align*}
\mathsf{P}_t^{K,N}(\mathbf{X},d\mathbf{Y})=\frac{1}{(2\pi)^{NK}\Gamma_N(K)}\exp \left(-\frac{1}{2t}\textnormal{Tr}\left(\mathbf{X}+\mathbf{Y}\right)\right) \left(\det \mathbf{Y}\right)^{K-N} {}_0\textnormal{F}^{(1)}_1\left(K;\frac{\mathbf{X}\mathbf{Y}}{4t^2}\right) \mathbf{1}_{\left(\mathbf{Y}\in \textnormal{Her}^+_\mathbb{C}(N)\right)}d\mathbf{Y},
\end{align*}
where ${}_0\textnormal{F}^{(1)}_1$ is the complex matrix hypergeometric function and $\Gamma_N(K)$ is the multivariate Gamma function, see \cite{Demni,GrossRichards} for more details, while $d\mathbf{Y}$ is the Lebesgue measure on Hermitian matrices. We also define, for a fixed $\mathbf{M}\in \textnormal{Mat}_{\mathbb{C}}(K,N)$, the following function $\mathsf{H}_{\mathbf{M}}$ on $\textnormal{Her}^+_\mathbb{C}(N)$,
\begin{align*}
 \mathsf{H}_{\mathbf{M}}\left(\mathbf{X}\right)=\frac{\det\left(\left[\mathsf{eval}_i\left(\mathbf{M}\mathbf{X}\mathbf{M}^*\right)\right]^{\frac{j-1}{2}}I_{j-1}\left(\sqrt{\mathsf{eval}_i\left(\mathbf{M}\mathbf{X}\mathbf{M}^*\right)}\right)\right)_{i,j=1}^K}{\mathsf{\Delta}_K\left(\mathsf{eval}\left(\mathbf{M}\mathbf{X}\mathbf{M}^*\right)\right)}.
\end{align*}
Then, we have the following proposition.

\begin{prop}\label{MatrixTheorem}
Let $K\ge N$ and suppose $\mathbf{M}\in \textnormal{Mat}_{\mathbb{C}}(K,N)$. Then, the matrix process $\left(\mathbf{X}_t^{\mathbf{M}};t\ge 0\right)=\left(\left(\mathbf{B}_t+t\mathbf{M}\right)^*\left(\mathbf{B}_t+t\mathbf{M}\right);t\ge 0\right)$, if started from the zero matrix, is a Markov process with transition kernel given by
\begin{align}\label{MatrixTransDens}
\mathsf{P}_t^{K,N,\mathbf{M}}(\mathbf{X},d\mathbf{Y})=\exp{\left(-\frac{1}{2}\textnormal{Tr}\left(\mathbf{M}^*\mathbf{M}\right)t\right)}\frac{\mathsf{H}_{\mathbf{M}}(\mathbf{Y})}{\mathsf{H}_{\mathbf{M}}(\mathbf{X})}\mathsf{P}_t^{K,N}(\mathbf{X},d\mathbf{Y}).
\end{align}
\end{prop}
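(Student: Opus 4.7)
The proof plan mirrors the strategy of Theorem~\ref{EigenvalueTheorem}, but carried out at the full matrix level: instead of averaging over both $\mathbb{U}(K)$ and $\mathbb{U}(N)$ in order to reduce to eigenvalues, we only average over $\mathbb{U}(K)$, which acts on $\mathbf{B}_t$ from the left and preserves $\mathbf{X}_s^{\mathbf{0}}=\mathbf{B}_s^*\mathbf{B}_s$ as a matrix (whereas right multiplication by $\mathbf{V}\in\mathbb{U}(N)$ would only preserve its spectrum).

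First I would apply Girsanov's theorem exactly as in the proof of Theorem~\ref{EigenvalueTheorem} to obtain, for a non-negative functional $\mathcal{F}$ of paths in $\textnormal{Her}^+_\mathbb{C}(N)$,
\begin{align*}
\mathbb{E}\big[\mathcal{F}(\mathbf{X}_s^{\mathbf{M}};s\le t)\big]=e^{-\frac{1}{2}\textnormal{Tr}(\mathbf{M}^*\mathbf{M})t}\mathbb{E}\Big[\mathcal{F}(\mathbf{X}_s^{\mathbf{0}};s\le t)\exp\Big(\tfrac{1}{2}\textnormal{Tr}(\mathbf{M}\mathbf{B}_t^*+\mathbf{B}_t\mathbf{M}^*)\Big)\Big].
\end{align*}
Since $(\mathbf{U}\mathbf{B}_s)^*(\mathbf{U}\mathbf{B}_s)=\mathbf{X}_s^{\mathbf{0}}$ for every $\mathbf{U}\in\mathbb{U}(K)$ and $(\mathbf{U}\mathbf{B}_t;t\ge 0)\overset{\textnormal{d}}{=}(\mathbf{B}_t;t\ge 0)$, the identity remains valid after replacing $\mathbf{B}_t$ by $\mathbf{U}\mathbf{B}_t$. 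Averaging over $\mathbf{U}$ drawn from Haar measure on $\mathbb{U}(K)$ and invoking Tonelli then yields
\begin{align*}
\mathbb{E}\big[\mathcal{F}(\mathbf{X}_s^{\mathbf{M}};s\le t)\big]=e^{-\frac{1}{2}\textnormal{Tr}(\mathbf{M}^*\mathbf{M})t}\mathbb{E}\bigg[\mathcal{F}(\mathbf{X}_s^{\mathbf{0}};s\le t)\int_{\mathbb{U}(K)}\exp\Big(\tfrac{1}{2}\textnormal{Tr}(\mathbf{M}\mathbf{B}_t^*\mathbf{U}^*+\mathbf{U}\mathbf{B}_t\mathbf{M}^*)\Big)d\mathbf{U}\bigg].
\end{align*}

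Next I would evaluate the inner $\mathbb{U}(K)$-integral. By unitary invariance of Haar measure it depends on $\mathbf{Z}:=\mathbf{B}_t\mathbf{M}^*\in\textnormal{Mat}_\mathbb{C}(K,K)$ only through its singular values, equivalently through the eigenvalues of $\mathbf{Z}^*\mathbf{Z}=\mathbf{M}\mathbf{X}_t^{\mathbf{0}}\mathbf{M}^*$. The rectangular unitary matrix integral of Harish-Chandra--Itzykson--Zuber type from \cite{HCRectangular1,HCRectangular2,HCRectangular3} provides an explicit determinantal evaluation which, up to a positive constant $c_K$ depending only on $K$, is exactly $\mathsf{H}_{\mathbf{M}}(\mathbf{X}_t^{\mathbf{0}})$. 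Substituting this back produces
\begin{align*}
\mathbb{E}\big[\mathcal{F}(\mathbf{X}_s^{\mathbf{M}};s\le t)\big]=c_K\, e^{-\frac{1}{2}\textnormal{Tr}(\mathbf{M}^*\mathbf{M})t}\mathbb{E}\Big[\mathcal{F}(\mathbf{X}_s^{\mathbf{0}};s\le t)\mathsf{H}_{\mathbf{M}}(\mathbf{X}_t^{\mathbf{0}})\Big],
\end{align*}
which realises $(\mathbf{X}_t^{\mathbf{M}};t\ge 0)$ as the Doob $h$-transform of the Laguerre matrix semigroup $\mathsf{P}_t^{K,N}$ by the space-time function $(t,\mathbf{X})\mapsto e^{-\frac{1}{2}\textnormal{Tr}(\mathbf{M}^*\mathbf{M})t}\mathsf{H}_{\mathbf{M}}(\mathbf{X})$. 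The normalising constant $c_K$ cancels in the ratio $\mathsf{H}_{\mathbf{M}}(\mathbf{Y})/\mathsf{H}_{\mathbf{M}}(\mathbf{X})$ appearing in (\ref{MatrixTransDens}), and the Markov property of $(\mathbf{X}_t^{\mathbf{M}};t\ge 0)$ is inherited from that of $(\mathbf{X}_t^{\mathbf{0}};t\ge 0)$ through the $h$-transform structure; the degenerate starting point $\mathbf{X}=0$ is handled by the usual L'H\^{o}pital convention for $\mathsf{H}_{\mathbf{M}}(\mathbf{Y})/\mathsf{H}_{\mathbf{M}}(\mathbf{X})$.

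The main obstacle is the matrix-integral step: correctly invoking the rectangular unitary integral over $\mathbb{U}(K)$ and recognising the resulting determinant, naturally expressed in the singular values of $\mathbf{B}_t\mathbf{M}^*$, as precisely the function $\mathsf{H}_{\mathbf{M}}$ of the proposition. Once this identification is in hand, the Girsanov reduction and the $h$-transform bookkeeping are routine and the proposition follows.
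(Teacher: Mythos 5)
Your proof is correct and complete, but it takes a genuinely different route from the one the paper actually writes out. You adapt the Girsanov-plus-Haar-averaging argument of Theorem~\ref{EigenvalueTheorem}, averaging only over $\mathbb{U}(K)$ on the left so that the matrix $\mathbf{X}_s^{\mathbf{0}}=\mathbf{B}_s^*\mathbf{B}_s$ (and not merely its spectrum) is preserved; the paper explicitly remarks that this works, but elects instead to reprove the result by constructing a matrix analogue of the Rogers--Pitman intertwining. Concretely, the paper introduces the map $\Phi:\mathbf{A}\mapsto\mathbf{A}^*\mathbf{A}$, the Haar-on-fibers kernel $\mathsf{\Lambda}(\mathbf{Y},d\mathbf{A})$, verifies $\mathsf{\Lambda}\circ\Phi=\mathsf{Id}$ and the intertwining $\mathsf{P}_t^{K,N}\mathsf{\Lambda}=\mathsf{\Lambda}\mathsf{Q}_t^{K,N}$, then $h$-transforms by $\mathsf{h}_{\mathbf{M}}(\mathbf{X})=\exp\big(\tfrac{1}{2}\textnormal{Tr}(\mathbf{M}\mathbf{X}^*+\mathbf{X}\mathbf{M}^*)\big)$ and invokes Lemma~2 and Theorem~2 of \cite{RogersPitman}. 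Your route reaches the same $h$-transform identity more directly, at the cost of not exhibiting the intertwining structure (which the paper considers of independent interest); the paper's route requires the intertwining machinery but makes the Markov-function framework explicit. Both hinge on the same matrix integral to identify $\mathsf{H}_{\mathbf{M}}$.

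One small but worth-fixing slip: the $\mathbb{U}(K)$ integral
\[
\int_{\mathbb{U}(K)}\exp\Big(\tfrac{1}{2}\textnormal{Tr}\big(\mathbf{M}\mathbf{B}_t^*\mathbf{U}^*+\mathbf{U}\mathbf{B}_t\mathbf{M}^*\big)\Big)d\mathbf{U}
\]
is not the rectangular Harish-Chandra--Itzykson--Zuber integral of \cite{HCRectangular1,HCRectangular2,HCRectangular3} (that one runs over $\mathbb{U}(K)\times\mathbb{U}(N)$ and is what Theorem~\ref{EigenvalueTheorem} uses). It is the partition function of the Brezin--Gross--Witten model, for which the paper cites \cite{BrezinGross,GrossWitten}; writing $\mathbf{C}=\mathbf{B}_t\mathbf{M}^*$, the evaluation there gives precisely $2^{K(K-1)/2}\big[\prod_{j=1}^{K-1}j!\big]\,\mathsf{H}_{\mathbf{M}}(\mathbf{X}_t^{\mathbf{0}})$ with $c=\mathsf{eval}(\mathbf{C}^*\mathbf{C})=\mathsf{eval}(\mathbf{M}\mathbf{X}_t^{\mathbf{0}}\mathbf{M}^*)$, confirming your claim with the constant $c_K$ spelled out. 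Your singular-value reduction of the integral (via $\mathbf{U}\mapsto\mathbf{W}\mathbf{U}\mathbf{W}'$ with $\mathbf{W},\mathbf{W}'$ from the SVD of $\mathbf{B}_t\mathbf{M}^*$) is correct and is the right way to see that the result is a function of $\mathsf{eval}(\mathbf{M}\mathbf{X}_t^{\mathbf{0}}\mathbf{M}^*)$ alone.
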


\begin{rmk}
It might have been possible to obtain Theorem \ref{EigenvalueTheorem} directly from Proposition \ref{MatrixTheorem} by integrating out the eigenvectors, but this seems to involve some more complicated computations compared to the proof we gave above. For the special case $N=1$ the two formulae (\ref{MatrixTransDens}) and (\ref{CondBesselDriftsTransDens}) are, as they should be, exactly equal. This can be seen as follows. First, note that from linear algebra
\begin{align*}
  \left(\mathsf{eval}_1\left(\mathbf{M}\mathbf{X}\mathbf{M}^*\right),\dots,\mathsf{eval}_K\left(\mathbf{M}\mathbf{X}\mathbf{M}^*\right)\right)=\left(0,\dots,0,\mathsf{eval}_1\left(\mathbf{M}^*\mathbf{M}\mathbf{X}\right),\dots,\mathsf{eval}_N\left(\mathbf{M}^*\mathbf{M}\mathbf{X}\right)\right).
\end{align*}
Then, using the explicit formula $x^{\frac{j-1}{2}}I_{j-1}\left(\sqrt{x}\right)=\sum_{m=0}^\infty \frac{1}{m!(m+j-1)!2^{2m+j}}x^{m+j-1}$ and L'H\^{o}pital's rule, after some computations, we obtain that $\mathsf{H}_{\mathbf{M}}\left(\mathbf{X}\right)$ is equal, up to a multiplicative constant independent of $\mathbf{X}$, to
\begin{align*}
 \tilde{\mathsf{H}}_{\mathbf{M}}\left(\mathbf{X}\right)=\frac{\det\left(\left[\mathsf{eval}_i\left(\mathbf{M}^*\mathbf{M}\mathbf{X}\right)\right]^{\frac{N-K+j-1}{2}}I_{K-N+j-1}\left(\sqrt{\mathsf{eval}_i\left(\mathbf{M}^*\mathbf{M}\mathbf{X}\right)}\right)\right)_{i,j=1}^N}{\mathsf{\Delta}_N\left(\mathsf{eval}\left(\mathbf{M}^*\mathbf{M}\mathbf{X}\right)\right)},
\end{align*}
which gives the equality of the two formulae for $N=1$.
\end{rmk}
\begin{proof}[Proof of Proposition \ref{MatrixTheorem}]
It would be possible to prove this result by adapting the arguments from the proof of Theorem \ref{EigenvalueTheorem} above; the main difference is that one multiplies $\mathbf{B}_t$ only by a single Haar distributed matrix $\mathbf{U}\in \mathbb{U}(K)$. We will instead establish Proposition \ref{MatrixTheorem} by adapting the proof of Rogers and Pitman \cite{RogersPitman} for multidimensional Brownian motion, as each step in that proof has a matrix analogue which might be of independent interest.

Consider the function $\Phi:\textnormal{Mat}_{\mathbb{C}}(K,N) \to \textnormal{Her}^+_\mathbb{C}(N)$ given by $\Phi:\mathbf{A}\mapsto \mathbf{A}^*\mathbf{A}$. Moreover, define the following set, for a fixed $\mathbf{Y}\in \textnormal{Her}^+_\mathbb{C}(N)$:
\begin{align*}
 \mathfrak{R}_K\left(\mathbf{Y}\right)=\left\{\mathbf{A}\in \textnormal{Mat}_{\mathbb{C}}(K,N):\Phi\left(\mathbf{A}\right)=\mathbf{A}^*\mathbf{A}=\mathbf{Y}\right\}.
\end{align*}
Consider the following Markov kernel $\mathsf{\Lambda}$ from $\textnormal{Her}^+_\mathbb{C}(N)$ to $\mathfrak{R}_K\left(\mathbf{Y}\right)$, the uniform measure on $\mathfrak{R}_K\left(\mathbf{Y}\right)$, given by
\begin{align*}
    \mathsf{\Lambda}\left(\mathbf{Y},d\mathbf{A}\right)=\mathsf{Law}\left[\mathbf{U}\mathbf{A}_{\star}\right](d\mathbf{A}), \  \textnormal{ where } \mathbf{A}_{\star} \textnormal{ is any representative of } \mathfrak{R}_K\left(\mathbf{Y}\right),
\end{align*}
and $\mathbf{U}$ is Haar distributed on $\mathbb{U}(K)$. Note that this is well-defined, namely independent of the choice of representative $\mathbf{A}_\star$, since for any $\mathbf{A}_1,\mathbf{A}_2\in \mathfrak{R}_K\left(\mathbf{Y}\right)$ there exists a $\mathbf{Q}\in \mathbb{U}(K)$ so that $\mathbf{A}_1=\mathbf{Q}\mathbf{A}_2$. 

Observe that $\mathsf{\Lambda}\circ \Phi=\mathsf{Id}$, the identity kernel on $\textnormal{Her}^+_\mathbb{C}(N)$. Also, by unitary invariance of matrix Brownian motion we have\footnote{Note that by unitary invariance we have:
\begin{align*}
  \mathsf{Law}\left(\mathbf{B}_t|\Phi(\mathbf{B}_t)=\mathbf{Y}\right)(d\mathbf{H})=\mathsf{\Lambda}(\mathbf{Y},d\mathbf{H}), \ \ \forall t \ge 0.
\end{align*}
See Lemma 1 in \cite{RogersPitman} on how this also essentially implies the intertwining.}, where $\mathsf{Q}_t^{K,N}$ is the semigroup of $\mathbf{B}_t$ on $\textnormal{Mat}_{\mathbb{C}}(K,N)$:
\begin{align*}
\mathsf{P}_t^{K,N}\mathsf{\Lambda}=\mathsf{\Lambda}\mathsf{Q}_t^{K,N}, \ \ \forall t \ge 0.
\end{align*}
Now, for a fixed $\mathbf{M}\in \textnormal{Mat}_{\mathbb{C}}(K,N)$ define the following function $\mathsf{h}_{\mathbf{M}}$ on $\textnormal{Mat}_{\mathbb{C}}(K,N)$
\begin{align*}
\mathsf{h}_{\mathbf{M}}(\mathbf{X})=\exp\left(\frac{1}{2}\textnormal{Tr}\left(\mathbf{M}\mathbf{X}^*+\mathbf{X}\mathbf{M}^*\right)\right).
\end{align*}
We consider the Doob $h$-transform $\mathsf{Q}_t^{K,N,\mathbf{M}}$ of $\mathsf{Q}_t^{K,N}$ by $\mathsf{h}_{\mathbf{M}}$, which is simply the semigroup of $\mathbf{B}_t+t\mathbf{M}$ on $\textnormal{Mat}_{\mathbb{C}}(K,N)$,
\begin{align*}
\mathsf{Q}_t^{K,N,\mathbf{M}}\left(\mathbf{X},d\mathbf{Y}\right)=\exp{\left(-\frac{1}{2}\textnormal{Tr}\left(\mathbf{M}^*\mathbf{M}\right)t\right)}\frac{\mathsf{h}_{\mathbf{M}}(\mathbf{Y})}{\mathsf{h}_{\mathbf{M}}(\mathbf{X})}\mathsf{Q}_t^{K,N}\left(\mathbf{X},d\mathbf{Y}\right).
\end{align*}
We now need the explicit evaluation of the following matrix integral, the partition function of the so-called Brezin-Gross-Witten model, see \cite{BrezinGross,GrossWitten}, with $\mathbf{C}\in \textnormal{Mat}_{\mathbb{C}}(K,K)$:
\begin{align*}
\int_{\mathbb{U}(K)}^{}\exp\left(\frac{1}{2}\textnormal{Tr}\left(\mathbf{C}^*\mathbf{U}^*+\mathbf{U}\mathbf{C}\right)\right)d\mathbf{U}=2^{\frac{K(K-1)}{2}}\left[\prod_{j=1}^{K-1}j!\right]\frac{\det\left(c_i^{\frac{j-1}{2}}I_{j-1}\left(\sqrt{c_i}\right)\right)_{i,j=1}^K}{\mathsf{\Delta}_K(c)},
\end{align*}
where $c=\mathsf{eval}\left(\mathbf{C}^*\mathbf{C}\right)$. 
Thus, we have
\begin{align*}
\left[\mathsf{\Lambda}\mathsf{h}_{\mathbf{M}}\right](\mathbf{Y}) &=\int_{\mathbb{U}(K)}^{}\exp\left(\frac{1}{2}\textnormal{Tr}\left(\mathbf{M}\mathbf{A}_{\star}^*\mathbf{U}^*+\mathbf{U}\mathbf{A}_{\star}\mathbf{M}^*\right)\right)d\mathbf{U}=2^{\frac{K(K-1)}{2}}\left[\prod_{j=1}^{K-1}j!\right]\mathsf{H}_{\mathbf{M}}\left(\mathbf{Y}\right).
\end{align*}
Hence if we define the following Markov kernel $\mathsf{\Lambda}$ from $\textnormal{Her}^+_\mathbb{C}(N)$ to $\mathfrak{R}_K\left(\mathbf{Y}\right)$ by
\begin{align*}
    \mathsf{\Lambda}^{\mathbf{M}}\left(\mathbf{Y},d\mathbf{A}\right)=\frac{\mathsf{h}_{\mathbf{M}}(\mathbf{A})}{\left[\mathsf{\Lambda}\mathsf{h}_{\mathbf{M}}\right](\mathbf{Y})}\mathsf{\Lambda}(\mathbf{Y},d\mathbf{A}),
\end{align*}
we have, see Lemma 2 in \cite{RogersPitman}:
\begin{align*}
    \mathsf{P}_t^{K,N,\mathbf{M}}\mathsf{\Lambda}^{\mathbf{M}}=\mathsf{\Lambda}^{\mathbf{M}}\mathsf{Q}_t^{K,N,\mathbf{M}}, \ \ \forall t \ge 0.
\end{align*}
Then Proposition \ref{MatrixTheorem} follows from Theorem 2 in \cite{RogersPitman}.

\end{proof}

\begin{rmk} Both proofs presented in this section can easily be adapted to study the eigenvalues of Hermitian Brownian motion with a drift; essentially the second argument is the one followed in \cite{Chin} where this result was first proven. Namely, consider the Hermitian Brownian motion $\mathbf{H}_t=\frac{1}{2}\left(\mathbf{B}_t+\mathbf{B}_t^*\right)$ on $\textnormal{Her}_\mathbb{C}(N)$, where $\mathbf{B}_t$ is the complex Brownian motion on $\textnormal{Mat}_\mathbb{C}\left(N,N\right)$ starting from the zero matrix, and let $\mathbf{M}\in \textnormal{Her}_\mathbb{C}(N)$ with $\mu=\mathsf{eval}\left(\mathbf{M}\right)$. The two key ingredients one needs are the unitary invariance of Hermitian Brownian motion
\begin{align*}
\left(\mathbf{U}\mathbf{H}_t\mathbf{U}^*;t\ge 0\right)&\overset{\textnormal{d}}{=}\left(\mathbf{H}_t;t\ge 0\right), \ \ \forall \  \mathbf{U}\in \mathbb{U}(N)
\end{align*}
and the Harish-Chandra (also called Itzykson-Zuber) matrix integral \cite{HarishChandra}
\begin{align*}
    \int_{\mathbb{U}(N)}^{}\exp\left(\textnormal{Tr}\left(\mathbf{A}\mathbf{U}\mathbf{C}\mathbf{U}^*\right)\right)d\mathbf{U}=\left[\prod_{p=1}^{N-1}p!\right]\frac{ \det\left(\exp(a_ic_j)\right)_{i,j=1}^N}{\mathsf{\Delta}_N(a)\mathsf{\Delta}_N(c)},
\end{align*}
where $\mathbf{A},\mathbf{C}\in \textnormal{Her}_\mathbb{C}(N)$ with $a=\mathsf{eval}\left(\mathbf{A}\right), c=\mathsf{eval}\left(\mathbf{C}\right)$. 
Then, we get that the evolution of the eigenvalues $\left(\mathsf{eval}\left(\mathbf{H}_t+t\mathbf{M}\right);t\ge 0\right)$ is Markovian with transition density given by
\begin{align*}
e^{-\frac{1}{2}\sum_{i=1}^{N}\mu_i^2t}\frac{\det\big(\exp(\mu_jy_i)\big)_{i,j=1}^N}{\det\big(\exp(\mu_jx_i)\big)_{i,j=1}^N}\det\big(s_t(y_j-x_i)\big)_{i,j=1}^N,
\end{align*}
where $s_t(z)=\frac{1}{\sqrt{2\pi t}}\exp\left(-\frac{z^2}{2t}\right)$ is the standard heat kernel. When $\mu_1<\cdots<\mu_N$ it is well-known, see for example \cite{BBO,JonesOConnell}, and also falls within the general framework of Section \ref{SectionConditioned} that this is the transition density of $N$ independent Brownian motions with drifts $\mu_1,\dots,\mu_N$, starting in $\mathbb{W}_N$ and conditioned to never intersect.
\end{rmk}

\section{Diffusions conditioned to never intersect proofs}\label{SectionConditioned}
In this section we prove Theorem \ref{ConditionedBesselProcess} as a corollary of a result on a more general class of one-dimensional diffusions under some rather natural assumptions. The novelty here is not in the form of the argument, which is based on the classical case of Brownian motion, but rather in finding the right setting and level of generality for which explicit formulae for non-intersection probabilities exist. We note that exit probabilities also in more general cones for multidimensional Brownian motion, with or without drifts, have been studied in many papers by a variety of techniques, see for example \cite{DeBlassie,Banuelos,Grabiner,BBO,Doumerc,JonesOConnell,Puchala,Garbit, KatoriBook} and the references therein. We begin with some preliminaries.

We consider a one-dimensional diffusion process in an interval $(l,r)$ with infinitesimal generator $\mathsf{L}$ given by
\begin{align}
\mathsf{L}=\mathsf{a}(x)\frac{d^2}{dx^2}+\mathsf{b}(x)\frac{d}{dx},
\end{align}
where we assume that the boundary points $l$ and $r$ are inaccessible (and in particular they can be removed from the state space, \cite{ItoMckean,BorodinSalminen,StochasticBook,EthierKurtz}). Namely, they are either natural or entrance boundaries, see \cite{ItoMckean,BorodinSalminen,StochasticBook,EthierKurtz} for the details on this terminology. An integral criterion involving the coefficients $\mathsf{a}$ and $\mathsf{b}$ for this to hold  exists due to Feller, see for example \cite{ItoMckean,BorodinSalminen,StochasticBook,EthierKurtz}. Also, in order to avoid unnecessary technicalities, we assume throughout this paper that
\begin{align}\label{smoothness}
 \mathsf{a}(\cdot), \mathsf{b}(\cdot)\in \mathcal{C}^{\infty} \ \textnormal{ and } \ \mathsf{a}(\cdot)>0 \ \textnormal{ on } (l,r).
\end{align}
We call the diffusion with generator $\mathsf{L}$ the $\mathsf{L}$-diffusion and denote its transition density with respect to the Lebesgue measure in $(l,r)$ by $p_t(x,y)$.

We shall denote by $\psi_{\lambda}$ the unique, up to multiplicative constant (the choice of which is unimportant in what follows and thus fix in an arbitrary way henceforth), strictly positive increasing\footnote{There is also a unique up to multiplicative constant strictly positive decreasing eigenfunction $u_\lambda$ of $\mathsf{L}$ with eigenvalue $\lambda$: $\mathsf{L}u_\lambda(x)=\lambda u_\lambda(x)$. A corresponding theory (with some adaptations) to the one we present below on computing probabilities of non-intersection can be built using these eigenfunctions as well.} eigenfunction of $\mathsf{L}$ (with eigenvalue $\lambda$) such that for $\lambda>0$
\begin{align}\label{EigenfunctionDef}
\mathsf{L}\psi_\lambda(x)=\lambda \psi_{\lambda}(x), \ \ x\in (l,r)
\end{align}
and subject to the appropriate boundary conditions at $l$ and $r$, see  \cite{ItoMckean,BorodinSalminen,StochasticBook}. 
We will denote by $\mathsf{L}^{\psi_{\lambda}}$ the generator of the Doob $h$-transformed diffusion, see \cite{RevuzYor,PitmanYor,Pinsky}, obtained from $\mathsf{L}$ and $\psi_{\lambda}$ (we also use the convention $\mathsf{L}^{\psi_0}\equiv \mathsf{L}$):
\begin{align*}
\mathsf{L}^{\psi_{\lambda}}=\psi_{\lambda}^{-1}\circ \mathsf{L} \circ \psi_{\lambda}-\lambda=\mathsf{a}(x)\frac{d^2}{dx^2}+\left(\mathsf{b}(x)+2\mathsf{a}(x)\frac{\psi'_\lambda(x)}{\psi_\lambda(x)}\right)\frac{d}{dx}.
\end{align*}
We will call the corresponding process the $\mathsf{L}^{\psi_{\lambda}}$-diffusion. We make the following standing assumption.
\begin{itemize}
    \item For all $\lambda\ge0$ the boundary points $l$ and $r$ are inaccessible for the $\mathsf{L}^{\psi_\lambda}$-diffusion and we denote this condition by (\textbf{BC}).
\end{itemize}
In particular, its transition density, denoted by $p_t^{\psi_{\lambda}}(x,y)$, with respect to the Lebesgue measure
\begin{align*}
 p_t^{\psi_{\lambda}}(x,y)=e^{-\lambda t}   \frac{\psi_\lambda(y)}{\psi_\lambda(x)}p_t(x,y),
\end{align*}
is a bona fide Markov (integrates to $1$) transition density in $(l,r)$.

The $\mathsf{L}^{\psi_\lambda}$-diffusion has the following interesting probabilistic interpretation (that we will not make use of here though), which is also closely related to Williams' path decomposition of one-dimensional diffusions \cite{Williams}, see \cite{PitmanYor} for more details. Namely, the $\mathsf{L}^{\psi_{\lambda}}$-diffusion is obtained by first killing the original $\mathsf{L}$-diffusion at a constant rate $\lambda$ and then conditioning this killed process to exit $(l,r)$ through $r$ (not necessarily in finite time, see \cite{PitmanYor} for the details).

It is instructive for the reader to keep in mind the following simple examples:
\begin{itemize}
    \item $\mathsf{L}=\frac{1}{2}\frac{d^2}{dx^2}$ is the generator of standard Brownian motion $\mathsf{w}(t)$ on $\mathbb{R}$ and $\psi_{\lambda}(x)=e^{\sqrt{2\lambda}x}$, where both $\pm \infty$ are natural boundaries. Then, the $\mathsf{L}^{\psi_{\lambda}}$-diffusion is simply Brownian motion with drift $\sqrt{2\lambda}$: $\mathsf{x}^{\lambda}(t)=\mathsf{w}(t)+\sqrt{2\lambda}t$.
    \item $\mathsf{L}=2x\frac{d^2}{dx^2}+\delta \frac{d}{dx}$ for $\delta\ge 2$ is the generator of the $\textnormal{BESQ}(\delta)$ process in $(0,\infty)$, with $\psi_{\lambda}(x)=\phi^{(\nu)}_{\lambda}(x)=(2\lambda x)^{-\frac{\nu}{2}}I_{\nu}\left(\left(2\lambda x\right)^{\frac{1}{2}}\right)$, where as usual $\delta=2(\nu+1)$. It is well-known, see \cite{RevuzYor,SurveyBessel}, that $0$ is an entrance boundary while $\infty$ is natural. Then, the $\mathsf{L}^{\psi_\lambda}$-diffusion is the $\textnormal{BESQ}_\lambda(\delta)$ process from the introduction.
\end{itemize}

Based on the example of Brownian motion we think of the term $2\mathsf{a}\frac{\psi_{\lambda}'}{\psi_\lambda}$ as a kind of generalized drift added to the diffusion $\mathsf{L}$ and that the parameter $\lambda$ governs the strength of this drift. We require a final assumption that we call asymptotic ordering, and denote by (\textbf{AO}).

\begin{itemize}
    \item Let $\lambda_1<\lambda_2$ and suppose that $\mathsf{x}^{\lambda_1}_1$ and $\mathsf{x}^{\lambda_2}_2$ are independent $\mathsf{L}^{\psi_{\lambda_1}}$ and $\mathsf{L}^{\psi_{\lambda_2}}$ diffusions starting from $x_1,x_2\in (l,r)$, not necessarily ordered. Then, (\textbf{AO}) is the following condition:
\begin{align}\label{AsymptoticOrdering}
 \mathbb{P}_{(x_1,x_2)}\left(\mathsf{x}_1^{\lambda_1}(t)<\mathsf{x}_2^{\lambda_2}(t)\right) \overset{t\to \infty}{ \longrightarrow} 1.
\end{align}
\end{itemize} 
In words, the two independent diffusions become ordered (irrespective of their initial ordering) depending on the parameter of their generalized drift. We believe that this should be true under some rather general assumptions on $\mathsf{L}$, which unfortunately remain elusive for now. Nevertheless, it could be checked on a case by case basis. For explicit examples, including the $\textnormal{BESQ}_\lambda(\delta)$ case \cite{Watanabe}, much stronger asymptotic results are known. For the quintessential example of Brownian motion, which provides the intuition, it is clearly obvious.

We now move on to our first result. Let $\left(\mathsf{x}_i^{\lambda_i}(t);t\ge 0\right)$ be independent $\mathsf{L}^{\psi_{\lambda_i}}$-diffusions. We assume that the process $\left(\left(\mathsf{x}_1^{\lambda_1}(t),\dots,\mathsf{x}_N^{\lambda_N}(t);t\ge 0\right)\right)$ starts in the chamber $\mathbb{W}_N=\mathbb{W}_N(l,r)$:
\begin{align*}
 \mathbb{W}_N=\left\{x=(x_1,\dots,x_N)\in (l,r)^N:x_1<x_2<\cdots<x_N \right\}.
\end{align*}
We are interested in the first collision (or intersection) time of these independent diffusions, which is also the time $\left(\left(\mathsf{x}_1^{\lambda_1}(t),\dots,\mathsf{x}_N^{\lambda_N}(t)\right);t\ge 0\right)$ first exits the chamber $\mathbb{W}_N$,
\begin{align*}
 \tau_{C}=\tau_C^{\mathsf{L},\lambda_1,\dots,\lambda_N}=\inf\left\{t\ge 0:\left(\mathsf{x}^{\lambda_1}_1(t),\dots,\mathsf{x}^{\lambda_N}_N(t)\right)\notin \mathbb{W}_N\right\}.
\end{align*}
We have the following explicit formula for $\tau_C$.

\begin{prop} \label{AsymptoticProbability} Let $\mathsf{L}$ be a one-dimensional diffusion process generator in an interval $(l,r)$, with positive increasing eigenfunctions $\psi_\lambda$ as in (\ref{EigenfunctionDef}), satisfying the assumptions above: (\ref{smoothness}), (\textbf{BC}) and (\textbf{AO}). Let $\mathsf{x}_i^{\lambda_i}$ be independent $\mathsf{L}^{\psi_{\lambda_i}}$-diffusions, where $\lambda_1<\lambda_2<\cdots<\lambda_N$, starting from $\left(\mathsf{x}_1^{\lambda_1}(0),\dots,\mathsf{x}_N^{\lambda_N}(0)\right)=x\in \mathbb{W}_N$. Then, the probability that these diffusions never collide is explicit and is given by
\begin{align}\label{CollisionProba}
\mathbb{P}_x(\tau_C=\infty)=\frac{\det\left(\psi_{\lambda_i}(x_j)\right)^N_{i,j=1}}{\prod_{i=1}^N\psi_{\lambda_i}(x_i)}.
\end{align}
\end{prop}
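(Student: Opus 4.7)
My plan is a martingale argument centred on the candidate harmonic function $H(x) := \det\big(\psi_{\lambda_i}(x_j)\big)_{i,j=1}^N / \prod_{i=1}^N \psi_{\lambda_i}(x_i)$ itself. Writing $\mathsf{G} = \sum_{i=1}^N \mathsf{L}^{\psi_{\lambda_i}}_{(i)}$ for the generator of the $N$-particle independent product process and $\mathsf{L}_{\mathrm{prod}} = \sum_i \mathsf{L}_{(i)}$ for its undrifted analogue, the harmonicity $\mathsf{G} H \equiv 0$ on $\mathbb{W}_N$ follows from a direct computation. First, applying $\mathsf{L}_{\mathrm{prod}}$ to the determinant column by column and using the eigenvalue equation $\mathsf{L}\psi_{\lambda_i} = \lambda_i \psi_{\lambda_i}$ gives
\[
\mathsf{L}_{\mathrm{prod}} \det\big(\psi_{\lambda_i}(x_j)\big) = \Big(\sum_{i=1}^N \lambda_i\Big) \det\big(\psi_{\lambda_i}(x_j)\big).
\]
Since $\mathsf{G}$ is exactly the Doob $h$-transform of $\mathsf{L}_{\mathrm{prod}}$ by the product eigenfunction $h(x) = \prod_i \psi_{\lambda_i}(x_i)$, the definition of the $h$-transform yields $\mathsf{G} H = h^{-1} \mathsf{L}_{\mathrm{prod}}\big(h \cdot H\big) - \big(\sum_i \lambda_i\big) H = h^{-1} \big(\sum_i \lambda_i\big) \det\big(\psi_{\lambda_i}(x_j)\big) - \big(\sum_i \lambda_i\big) H = 0$.

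Next, whenever two coordinates of $x$ coincide, the corresponding columns of $\big(\psi_{\lambda_i}(x_j)\big)$ are equal and $H$ vanishes; in particular $H(X(\tau_C)) = 0$ on $\{\tau_C < \infty\}$, where I write $X(t) = (\mathsf{x}_1^{\lambda_1}(t), \ldots, \mathsf{x}_N^{\lambda_N}(t))$. I would separately verify that $H$ is bounded on $\mathbb{W}_N$ — the strict ordering of both the $\lambda_i$'s and the $x_i$'s together with the monotonicity properties of $\psi_\lambda$ lets one control every term in the expansion
\[
H(x) = 1 + \sum_{\sigma \ne \mathrm{id}} (-1)^\sigma \prod_{i=1}^N \frac{\psi_{\lambda_i}(x_{\sigma(i)})}{\psi_{\lambda_i}(x_i)}.
\]
Granted boundedness, $\big(H(X(t \wedge \tau_C))\big)_{t \ge 0}$ is a bounded martingale for the $\mathsf{G}$-process, and optional stopping yields $H(x) = \mathbb{E}_x\big[H(X(t \wedge \tau_C))\big]$ for every $t \ge 0$.

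The final step is to let $t \to \infty$. By bounded convergence and the vanishing of $H$ on the boundary,
\[
H(x) = \mathbb{E}_x\big[\mathbf{1}_{\tau_C = \infty} \lim_{t \to \infty} H(X(t))\big],
\]
so it remains to prove $H(X(t)) \to 1$ almost surely on $\{\tau_C = \infty\}$. This is where assumption (\textbf{AO}) is essential: on the non-collision event, the pairwise asymptotic ordering (\ref{AsymptoticOrdering}) forces the particle with drift parameter $\lambda_i$ to be the $i$-th smallest for all large $t$, and the accompanying ``spreading apart'' inherited from it makes each ratio $\psi_{\lambda_i}(X_{\sigma(i)}(t))/\psi_{\lambda_i}(X_i(t))$ for $\sigma \ne \mathrm{id}$ decay to $0$ via the monotonicity of $\psi_\lambda$ in both its parameter and its argument. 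Upgrading the two-particle statement (\textbf{AO}) to this eigenfunction-level decay for every non-identity permutation is, in my view, the main technical obstacle; once it is in place, $\mathbb{P}_x(\tau_C = \infty) = H(x)$ drops out at once.
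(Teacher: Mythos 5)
Your approach is genuinely different from the paper's: you set up a PDE/martingale argument around the candidate harmonic function $H$, whereas the paper computes $\mathbb{P}_x(\tau_C>t)$ directly via the Karlin--McGregor formula, uses the Andr\'{e}ief-style separation of the integral into $(l,r)^N$ and $(l,r)^N\backslash\mathbb{W}_N$, and lets $t\to\infty$ with (\textbf{AO}) to kill the remainder. Your harmonicity computation $\mathsf{G}H\equiv 0$ is correct, as is the vanishing of $H$ on the boundary of $\mathbb{W}_N$.

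However there are two genuine gaps, and they are not merely technical. First, the boundedness of $H$ on $\mathbb{W}_N$ is asserted but not proved, and it is not a consequence of the stated hypotheses in any obvious way. Bounding each term $\prod_i \psi_{\lambda_i}(x_{\sigma(i)})/\psi_{\lambda_i}(x_i)$ for $\sigma\ne\mathrm{id}$ amounts to a total positivity (log-supermodularity) property of $(\lambda,x)\mapsto\psi_\lambda(x)$, which the paper does not assume and which is in fact intimately tied to the positivity of the very determinant you are trying to evaluate (cf.\ the nontrivial Lemma \ref{StrictPosLem} in the paper). Monotonicity of $\psi_\lambda$ in $x$ for each fixed $\lambda$ alone does not give it. The paper's proof sidesteps boundedness entirely: it never needs $H(X(t))$ to be uniformly bounded along the trajectory because it works with the explicit integral $\mathbb{P}_x(\tau_C>t)$.

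Second, you flag the step $H(X(t))\to 1$ a.s.\ on $\{\tau_C=\infty\}$ as ``the main technical obstacle,'' but the problem is worse than an obstacle: (\textbf{AO}) as stated is a statement about convergence \emph{in probability} of the pairwise ordering event, i.e.\ $\mathbb{P}(\mathsf{x}_1^{\lambda_1}(t)<\mathsf{x}_2^{\lambda_2}(t))\to 1$. It gives no information about almost sure ordering, let alone almost sure ``spreading apart'' of the paths, let alone the quantitative decay of the eigenfunction ratios that you would need. The paper's computation uses (\textbf{AO}) only in the harmless form $\mathbb{P}_{(x_{\sigma(1)},\dots,x_{\sigma(N)})}\left((\mathsf{x}^{\lambda_1}_1(t),\dots,\mathsf{x}^{\lambda_N}_N(t))\notin\mathbb{W}_N\right)\to 0$, which follows from (\textbf{AO}) by a union bound over pairs; this is exactly the strength of (\textbf{AO}) and nothing more. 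Tellingly, the remark in the paper immediately after Proposition \ref{AsymptoticProbability} describes precisely your strategy and points out that it works only under a \emph{strengthened} (almost-sure) asymptotic ordering condition together with boundary behaviour of $H$ (that $H\to 1$ as the gaps go to infinity) that is ``rather tricky to prove in a general setting.'' So your proof would require strictly stronger hypotheses than those actually assumed, and as written both steps are unestablished.
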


\begin{proof}
By using the Karlin-McGregor formula, see \cite{KarlinMcGregor,Karlin,ItoMckean}, and a Doob $h$-transform \cite{Doob} we obtain, recall that from (\textbf{BC}) we have no atoms at $l$ or $r$:
\begin{align*}
    \mathbb{P}_x\left(\left(\mathsf{x}^{\lambda_1}_1(t),\dots,\mathsf{x}^{\lambda_N}_N(t)\right)\in dy,\tau_C>t\right)=e^{-t\sum_{i=1}^{N}\lambda_i}\prod_{i=1}^N\frac{\psi_{\lambda_i}(y_i)}{\psi_{\lambda_i}(x_i)}\det\left(p_t(x_i,y_j)\right)^N_{i,j=1}dy.
\end{align*}
Thus, we can compute, using the formula above and the fact that no mass is lost at the boundary points, in particular $p_t^{\psi_\lambda}(x,y)$ integrates to $1$ over $(l,r)$, because of (\textbf{BC}):
\begin{align*}
\mathbb{P}_x(\tau_C>t)&=\int_{\mathbb{W}_N}^{}e^{-t\sum_{i=1}^{N}\lambda_i}\prod_{i=1}^N\frac{\psi_{\lambda_i}(y_i)}{\psi_{\lambda_i}(x_i)}\det\left(p_t(x_i,y_j)\right)^N_{i,j=1}dy_1\cdots dy_N\\
&=\sum_{\sigma \in \mathfrak{S}(N)}\textnormal{sgn}(\sigma)\int_{\mathbb{W}_N}e^{-t\sum_{i=1}^{N}\lambda_i}\prod_{i=1}^N\frac{\psi_{\lambda_i}(y_i)}{\psi_{\lambda_i}(x_i)} \prod_{i=1}^N p_t(x_{\sigma(i)},y_i)dy_1\cdots dy_N\\
&=\sum_{\sigma \in \mathfrak{S}(N)}\textnormal{sgn}(\sigma)\bigg[ \int_{(l,r)^N}e^{-t\sum_{i=1}^{N}\lambda_i}\prod_{i=1}^N\frac{\psi_{\lambda_i}(y_i)}{\psi_{\lambda_i}(x_i)} \prod_{i=1}^N p_t(x_{\sigma(i)},y_i)dy_1\cdots dy_N\\
&- \int_{(l,r)^N\backslash\mathbb{W}_N}e^{-t\sum_{i=1}^{N}\lambda_i}\prod_{i=1}^N\frac{\psi_{\lambda_i}(y_i)}{\psi_{\lambda_i}(x_i)} \prod_{i=1}^N p_t(x_{\sigma(i)},y_i)dy_1\cdots dy_N\bigg]\\
&=\frac{\det\left(\psi_{\lambda_i}(x_j)\right)^N_{i,j=1}}{\prod_{i=1}^N\psi_{\lambda_i}(x_i)}-\sum_{\sigma \in \mathfrak{S}(N)}\textnormal{sgn}(\sigma)\bigg[\prod_{i=1}^N\frac{\psi_{\lambda_i}(x_{\sigma(i)})}{\psi_{\lambda_i}(x_i)} \\ 
& \ \ \ \ \times \mathbb{P}_{(x_{\sigma(1)},\dots,x_{\sigma(N)})}\left(\left(\mathsf{x}^{\lambda_1}_1(t),\dots,\mathsf{x}^{\lambda_N}_N(t)\right)\notin \mathbb{W}_N\right)\bigg].
\end{align*}
Note that, for the last equality we made use of the following manipulation, 
\begin{align*}
\int_{\mathcal{A}}e^{-t\sum_{i=1}^{N}\lambda_i}\prod_{i=1}^N\frac{\psi_{\lambda_i}(y_i)}{\psi_{\lambda_i}(x_i)} \prod_{i=1}^N p_t(x_{\sigma(i)},y_i)dy_1\cdots dy_N\\ 
=\prod_{i=1}^N\frac{\psi_{\lambda_i}(x_{\sigma(i)})}{\psi_{\lambda_i}(x_i)}\int_{\mathcal{A}}\prod_{i=1}^N p_t^{\psi_{\lambda_i}}(x_{\sigma(i)},y_i)dy_1\cdots dy_N,
\end{align*}
with $\mathcal{A}=(l,r)^N$ and $\mathcal{A}=(l,r)^N\backslash\mathbb{W}_N$ respectively, and recall again that $p_t^{\psi_\lambda}(x,y)$ integrates to $1$ over $(l,r)$. Hence, by the asymptotic ordering assumption (\textbf{AO}) we have that for any permutation $\sigma$:
\begin{align*}
 \mathbb{P}_{(x_{\sigma(1)},\dots,x_{\sigma(N)})}\left(\left(\mathsf{x}^{\lambda_1}_1(t),\dots,\mathsf{x}^{\lambda_N}_N(t)\right)\notin \mathbb{W}_N\right)\longrightarrow 0, \ \ \textnormal{ as } t \to \infty,
\end{align*}
and this completes the proof.
\end{proof}

\begin{rmk}
It is easy to show that the right hand side of (\ref{CollisionProba}) is a bounded 
solution to the PDE in $\mathbb{W}_N$
\begin{align*}
 \left(\sum_{i=1}^N \left[\psi_{\lambda_i}^{-1}\circ \mathsf{L}_{x_i}\circ \psi_{\lambda_i}-\lambda_i\right]\right)u(x_1,\dots,x_N)=0,
\end{align*}
 with Dirichlet boundary conditions $ u|_{x_i=x_{i-1}}\equiv 0, \textnormal{ for } i=2,\dots, N,$ when two coordinates coincide. Restricting for simplicity to the case $r=\infty$, being a natural boundary point, we expect (in some generality) that the right hand side of (\ref{CollisionProba}) should also satisfy: $ u(x_1,\dots,x_N)\longrightarrow 1$  if $(x_i-x_{i-1})\to \infty$ for all $i=2,\dots,N$. This turns out to be rather tricky to prove in a general setting but for explicit examples such as the $\textnormal{BESQ}(\delta)$ case it can be shown using asymptotics for special functions. If moreover the corresponding $\mathsf{L}^{\psi_\lambda}$-diffusions satisfy a strengthened asymptotic ordering condition\footnote{Let $\lambda_1<\lambda_2$ and suppose that $\mathsf{x}^{\lambda_1}_1$ and $\mathsf{x}^{\lambda_2}_2$ are independent $\mathsf{L}^{\psi_{\lambda_1}}$ and $\mathsf{L}^{\psi_{\lambda_2}}$ diffusions starting from $x_1<x_2$. Then, this strengthened asymptotic ordering condition is the following:
\begin{align*}
\mathbb{P}\left(\lim_{t\to \infty}\left(\mathsf{x}_2^{\lambda_2}(t)-\mathsf{x}_1^{\lambda_1}(t)\right)=\infty\big|\mathsf{x}_2^{\lambda_2}(0)=x_2,\mathsf{x}_1^{\lambda_1}(0)=x_1\right)=1.
\end{align*} In the $\textnormal{BESQ}(\delta)$ case this is a consequence of a result of Watanabe \cite{Watanabe} recalled in (\ref{WatanabeAsymptotic}) below.} then it can be shown, using It\^{o}'s formula and the optional stopping theorem, that the non-collision probability $\mathbb{P}_x(\tau_C=\infty)$ is the unique solution to the PDE (along with the conditions) above. This gives an alternative route to establishing the statement of Proposition \ref{AsymptoticProbability}, under stronger assumptions, which can nevertheless still be checked for explicit examples such as the $\textnormal{BESQ}(\delta)$ case.
\end{rmk}

While $\mathbb{P}_x\left(\tau_C=\infty\right)$ is clearly non-negative, we will need that it is in fact strictly positive and this is what we prove in the next lemma. Strict positivity is also a consequence 
  for a different class of diffusions of our results in Section \ref{SectionInteractingDiffusions}.

\begin{lem}\label{StrictPosLem} Let $\mathsf{L}$ be a diffusion process generator in an interval $(l,r)$ with positive increasing eigenfunctions $\psi_\lambda$ as in (\ref{EigenfunctionDef}), satisfying (\ref{smoothness}), (\textbf{BC}) and (\textbf{AO}). If $\lambda_1<\lambda_2<\cdots<\lambda_N$ we have
\begin{align*}
 \det\left(\psi_{\lambda_i}(x_j)\right)^N_{i,j=1}> 0, \ \ \forall x\in \mathbb{W}_N.   
\end{align*}
\end{lem}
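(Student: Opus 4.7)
By Proposition \ref{AsymptoticProbability} we already have the identity $\det\bigl(\psi_{\lambda_i}(x_j)\bigr)_{i,j=1}^N \big/ \prod_{i=1}^N \psi_{\lambda_i}(x_i) = \mathbb{P}_x(\tau_C = \infty)$, so the determinant is automatically non-negative. Since each $\psi_{\lambda_i}(x_i) > 0$, it suffices to prove $\mathbb{P}_x(\tau_C = \infty) > 0$ for every $x \in \mathbb{W}_N$. My plan is a strong Markov argument at an intermediate time $T$ combined with a large-separation estimate based on (\textbf{AO}).

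The intermediate-time step uses the formula already derived inside the proof of Proposition \ref{AsymptoticProbability}, namely that under $\{\tau_C > T\}$ the joint law of the process at time $T$ on $\mathbb{W}_N$ has density proportional to $\prod_{i=1}^N \psi_{\lambda_i}(y_i) \det\bigl(p_T(x_i,y_j)\bigr)_{i,j=1}^N$. Karlin--McGregor positivity of $\det\bigl(p_T(x_i,y_j)\bigr)_{i,j=1}^N$ on $\mathbb{W}_N \times \mathbb{W}_N$, together with the strict positivity of the $\psi_{\lambda_i}$, makes this density strictly positive everywhere on $\mathbb{W}_N$. Hence for any non-empty open $U \subset \mathbb{W}_N$, the strong Markov property at time $T$ gives
\begin{equation*}
\mathbb{P}_x(\tau_C = \infty) \ \geq \ \mathbb{P}_x\bigl(\mathsf{x}(T) \in U,\, \tau_C > T\bigr) \cdot \inf_{y \in U} \mathbb{P}_y(\tau_C = \infty),
\end{equation*}
in which the first factor on the right is strictly positive.

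It remains to exhibit an open $U$ on which $\inf_{y \in U} \mathbb{P}_y(\tau_C = \infty) > 0$. The natural candidate is the collection of widely-separated configurations $U_R = \{ y \in \mathbb{W}_N : y_{i+1} - y_i > R \text{ for all } i\}$ with $R$ taken large. For such starting points, assumption (\textbf{AO}) applied to each pair $\bigl(\mathsf{x}_i^{\lambda_i}, \mathsf{x}_{i+1}^{\lambda_{i+1}}\bigr)$, combined with a short-time fluctuation bound (so that large swings of the continuous diffusions in any fixed compact time window are unlikely), should give a pairwise non-collision probability of at least $1 - \varepsilon/N$, uniformly in $y \in U_R$, once $R$ is chosen large enough. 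A union bound over the $N-1$ adjacent pairs then yields $\mathbb{P}_y(\tau_C = \infty) \geq 1 - \varepsilon$ on $U_R$, and inserting this into the displayed inequality completes the proof.

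The main obstacle is precisely this last step: (\textbf{AO}) as stated is only a pointwise statement at infinity, and promoting it to a uniform lower bound on the pairwise non-collision probability over the whole trajectory requires some additional quantitative input. For the main examples of interest in the paper, namely Brownian motion with drift and the $\textnormal{BESQ}_\lambda(\delta)$ process, a pathwise strengthening of (\textbf{AO}) in which the gap process is transient to $+\infty$ almost surely is available, see (\ref{WatanabeAsymptotic}), so the argument closes via standard one-dimensional non-hitting estimates; in the general setting one may need to invoke a mild strengthening of (\textbf{AO}) such as the one referenced in the preceding remark.
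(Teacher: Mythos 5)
Your reduction via the strong Markov property to finding a non-empty open set $U$ on which $\inf_{y\in U}\mathbb{P}_y(\tau_C=\infty)>0$ is sound (and the strict positivity of the time-$T$ sub-probability density on $\mathbb{W}_N$ follows from Karlin--McGregor positivity as you say), but the second step is a genuine gap, not merely a technical wrinkle. The assumption (\textbf{AO}) from (\ref{AsymptoticOrdering}) only controls the one-time marginal: it says $\mathbb{P}_{(x_1,x_2)}\left(\mathsf{x}_1^{\lambda_1}(t)<\mathsf{x}_2^{\lambda_2}(t)\right)\to 1$ as $t\to\infty$. This is entirely compatible with the two paths crossing (possibly infinitely often) before becoming ordered, and hence with $\mathbb{P}_{(x_1,x_2)}(\tau_C<\infty)=1$ for every starting point and every separation $R$. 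You acknowledge this yourself; the strengthening you invoke, almost sure transience of the gap process, is what Watanabe's (\ref{WatanabeAsymptotic}) supplies in the $\textnormal{BESQ}$ case but is not a consequence of (\textbf{AO}) in general, whereas the lemma is claimed under (\textbf{AO}) alone. As written, the argument does not close.

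The paper's actual proof is purely analytic and sidesteps this. It starts from the Andr\'eief identity
\begin{equation*}
\int_{\mathbb{W}_N}\det\left(p_t(x_i,y_j)\right)_{i,j=1}^N\det\left(\psi_{\lambda_i}(y_j)\right)_{i,j=1}^N\,dy \;=\; e^{\sum_{i=1}^N\lambda_i t}\det\left(\psi_{\lambda_i}(x_j)\right)_{i,j=1}^N,
\end{equation*}
valid because $p_t^{\psi_\lambda}$ integrates to one by (\textbf{BC}). By Karlin--McGregor positivity of $\det\left(p_t(x_i,y_j)\right)_{i,j=1}^N$ and the non-negativity of $\det\left(\psi_{\lambda_i}(y_j)\right)_{i,j=1}^N$ already guaranteed by Proposition \ref{AsymptoticProbability}, the left side is strictly positive unless $\det\left(\psi_{\lambda_i}(y_j)\right)_{i,j=1}^N\equiv 0$ on $\mathbb{W}_N$. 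Ruling out that degeneracy reduces, after dividing by the Vandermonde and letting the $y_j$ coalesce, to a Wronskian argument: a vanishing Wronskian would force a linear relation $\sum_j\alpha_j\psi_{\lambda_j}\equiv 0$ on a subinterval, and iterating $\mathsf{L}$ on this relation while exploiting the distinct eigenvalues $\lambda_j$ forces one of the strictly positive $\psi_{\lambda_j}$ to vanish there, a contradiction. This route replaces your Step 2 and needs no strengthening of (\textbf{AO}).
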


\begin{proof}
Let $x\in \mathbb{W}_N$ be arbitrary. First, by using the Andr\'{e}ief identity \cite{Andreief} and the fact that, due to (\textbf{BC}), $p_t^{\psi_{\lambda}}(x,y)$ integrates to $1$ over $(l,r)$, we obtain
\begin{align}
\int_{\mathbb{W}_N}\det\left(p_t(x_i,y_j)\right)_{i,j=1}^N\det\left(\psi_{\lambda_i}(y_j)\right)_{i,j=1}^N dy&=\det\left(\int_l^r p_t(x_j,z)\psi_{\lambda_i}(z)dz\right)_{i,j=1}^N\nonumber\\
&=e^{\sum_{i=1}^N \lambda_i t}\det\left(\psi_{\lambda_i}(x_j)\right)_{i,j=1}^N.\label{AndreiefApplication}
\end{align}
Now, from (\ref{smoothness}) we have that $p_t(x,y)$ is strictly positive and also smooth (continuity would suffice for what follows) in $y\in (l,r)$ for any $(t,x)\in (0,\infty)\times (l,r)$. Hence, from Theorem 4 of \cite{KarlinMcGregor} we get that 
\begin{align*}
  \det\left(p_t(x_i,y_j)\right)_{i,j=1}^N >0 , \ \ \forall (t,x,y)\in (0,\infty) \times \mathbb{W}_N \times \mathbb{W}_N,
\end{align*}
and moreover this is a smooth function in $y\in \mathbb{W}_N$. In addition, from Proposition \ref{AsymptoticProbability}, whose conditions are satisfied, we get that $\det\left(\psi_{\lambda_i}(y_j)\right)_{i,j=1}^N\ge 0$ and from (\ref{smoothness}) this function is also smooth in $\mathbb{W}_N$. We then see from (\ref{AndreiefApplication}) that we must have $\det\left(\psi_{\lambda_i}(x_j)\right)_{i,j=1}^N>0$, since the integral on the left hand side is strictly positive unless the following holds
\begin{equation}\label{Contradiction}
 \det\left(\psi_{\lambda_i}(y_j)\right)_{i,j=1}^N\equiv 0, \ \ \forall y \in \mathbb{W}_N,   
\end{equation}
which we now show leads to a contradiction. So assume (\ref{Contradiction}). Then, by dividing by the Vandermonde determinant $\mathsf{\Delta}_N(y)$ and taking $(y_1,\dots,y_N)\to (z,\dots,z)$ we obtain that the Wronskian of $\psi_{\lambda_1}, \dots, \psi_{\lambda_N}$ is identically zero on $(l,r)$, namely
\begin{equation*}
 \det \left(\partial_z^{j-1}\psi_{\lambda_i}(z)\right)_{i,j=1}^N \equiv 0  , \ \ \forall z \in (l,r).
\end{equation*}
By a classical result, see for example the Corollary on page 48 of \cite{Hurewicz}, we obtain that there exists a non-empty subinterval $(\tilde{l},\tilde{r})\subseteq	(l,r)$ on which the functions $\psi_{\lambda_i}$ are linearly dependent. Namely, there exist constants $\alpha_1, \dots, \alpha_N \in \mathbb{R}$, not all of them zero, such that
\begin{equation*}
   \sum_{j=1}^N \alpha_j \psi_{\lambda_j}(z) =0 , \ \ \forall z \in (\tilde{l},\tilde{r}).
\end{equation*}
Let $i_*=\max\{i: \alpha_i \neq 0 \}$. Then, we can write
\begin{align*}
 \psi_{\lambda_{i_*}}(z) =-\sum_{j=1}^{i_*-1}\frac{\alpha_j}{\alpha_{i_*}}\psi_{\lambda_j}(z), \ \ \forall z \in (\tilde{l},\tilde{r}).
\end{align*}
Applying the diffusion operator $\mathsf{L}$ to both sides of this equality, and using the eigenfunction relation (\ref{EigenfunctionDef}), $n$ times  we obtain
\begin{align*}
    \psi_{\lambda_{i_*}}(z) =-\sum_{j=1}^{i_*-1}\left(\frac{\lambda_j}{\lambda_{i_*}}\right)^n\frac{\alpha_j}{\alpha_{i_*}}\psi_{\lambda_j}(z), \ \ \forall z \in (\tilde{l},\tilde{r}).
\end{align*}
Since for all $j=1,\dots,i_*-1$, $\left(\frac{\lambda_j}{\lambda_{i_*}}\right)<1$, by sending $n \to \infty$ we get $\psi_{\lambda_{i_*}}(z)=0$ in $(\tilde{l},\tilde{r})$ which gives the desired contradiction (since $\psi_\lambda$ is strictly positive) and completes the proof.
\end{proof}

An immediate consequence of Proposition \ref{AsymptoticProbability} and Lemma \ref{StrictPosLem} is the following.

\begin{cor}\label{StrictPositivity} In the setting of Proposition \ref{AsymptoticProbability} we have
\begin{align*}
 \mathbb{P}_x(\tau_C=\infty)=\frac{\det\left(\psi_{\lambda_i}(x_j)\right)^N_{i,j=1}}{\prod_{i=1}^N\psi_{\lambda_i}(x_i)}>0, \ \ \forall x\in \mathbb{W}_N.   
\end{align*}
\end{cor}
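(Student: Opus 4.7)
The plan is to combine the two preceding results in essentially the most direct manner. Proposition \ref{AsymptoticProbability} already gives the explicit identity
\begin{align*}
\mathbb{P}_x(\tau_C=\infty)=\frac{\det\left(\psi_{\lambda_i}(x_j)\right)^N_{i,j=1}}{\prod_{i=1}^N\psi_{\lambda_i}(x_i)}
\end{align*}
for every $x\in\mathbb{W}_N$, so the only remaining content of the corollary is the strict positivity of the right-hand side.

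For that, I would simply invoke Lemma \ref{StrictPosLem}, which under exactly the hypotheses of Proposition \ref{AsymptoticProbability} asserts that the numerator $\det\left(\psi_{\lambda_i}(x_j)\right)^N_{i,j=1}$ is strictly positive on $\mathbb{W}_N$. For the denominator, I would note that by the very definition of the eigenfunctions $\psi_\lambda$ entering in (\ref{EigenfunctionDef}) each $\psi_{\lambda_i}$ is strictly positive on $(l,r)$, so $\prod_{i=1}^N\psi_{\lambda_i}(x_i)>0$ for any $x\in\mathbb{W}_N\subset(l,r)^N$. Taking the ratio then yields the desired strict positivity.

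There is no real obstacle at this stage: all the analytic work has been done in Proposition \ref{AsymptoticProbability} (which used Karlin--McGregor together with the asymptotic ordering assumption (\textbf{AO}) to identify the non-collision probability with the explicit determinantal expression) and in Lemma \ref{StrictPosLem} (which used the Andr\'eief identity, strict positivity of $\det(p_t(x_i,y_j))$ from \cite{KarlinMcGregor}, and a Wronskian/linear-independence argument based on the distinctness of the eigenvalues $\lambda_i$ to rule out the vanishing of the numerator). The corollary is therefore a one-line consequence, and I would present the proof simply as: ``combine Proposition \ref{AsymptoticProbability} with Lemma \ref{StrictPosLem}, noting that $\psi_{\lambda_i}(x_i)>0$ for all $i$.''
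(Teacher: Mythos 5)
Your proposal is correct and takes exactly the paper's route: the paper presents this corollary as an immediate consequence of Proposition \ref{AsymptoticProbability} (the explicit formula for the non-collision probability) and Lemma \ref{StrictPosLem} (strict positivity of the numerator determinant on $\mathbb{W}_N$), with the observation that the denominator is strictly positive since each $\psi_{\lambda_i}$ is strictly positive by definition. There is nothing to add.
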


We can now prove the following result.

\begin{prop} In the setting of Proposition \ref{AsymptoticProbability},
for any $x \in \mathbb{W}_N$ and Borel set $\mathcal{A}\subset \mathbb{W}_N$ we have
\begin{align}
\mathbb{P}_x\left(\left(\mathsf{x}^{\lambda_1}_1(t),\dots,\mathsf{x}^{\lambda_N}_N(t)\right)\in \mathcal{A}|\tau_C=\infty\right)=\int_{\mathcal{A}}^{}\mathbb{P}_x\left(\left(\mathsf{x}^{\lambda_1}_1(t),\dots,\mathsf{x}^{\lambda_N}_N(t)\right)\in dy,\tau_C>t\right)\frac{\mathbb{P}_y\left(\tau_C=\infty\right)}{\mathbb{P}_x\left(\tau_C=\infty\right)}.
\end{align}
In particular, the transition kernel of the conditioned process is given by the explicit formula, for any $t>0$, $x \in \mathbb{W}_N$ and $y \in \overline{\mathbb{W}}_N$:
\begin{align}\label{ConditionedSemigroup}
\mathfrak{P}_t^{(N),(\lambda_1,\dots,\lambda_N)}(x,dy)=e^{-t\sum_{i=1}^{N}\lambda_i}\frac{\det\left(\psi_{\lambda_i}(y_j)\right)^N_{i,j=1}}{\det\left(\psi_{\lambda_i}(x_j)\right)^N_{i,j=1}}\det\left(p_t(x_i,y_j)\right)^N_{i,j=1}dy_1\dots dy_N.
\end{align}
\end{prop}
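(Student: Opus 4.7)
The plan is to carry out a routine Doob $h$-transform argument, with the non-collision probability from Proposition \ref{AsymptoticProbability} playing the role of the harmonic function. The conditioning on $\{\tau_C=\infty\}$ is legitimate because Corollary \ref{StrictPositivity} guarantees that $\mathbb{P}_x(\tau_C=\infty)>0$ for every $x\in\mathbb{W}_N$.

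First, abbreviate $\mathbf{X}^{(\lambda)}(t)=\left(\mathsf{x}_1^{\lambda_1}(t),\dots,\mathsf{x}_N^{\lambda_N}(t)\right)$ and apply the Markov property of this product diffusion at time $t$. Since $\{\tau_C=\infty\}\subseteq\{\tau_C>t\}$ and, on this event, whether $\tau_C$ is infinite is determined by the post-$t$ trajectory, we obtain
\begin{align*}
\mathbb{P}_x\left(\mathbf{X}^{(\lambda)}(t)\in\mathcal{A},\,\tau_C=\infty\right)=\mathbb{E}_x\left[\mathbf{1}_{\{\mathbf{X}^{(\lambda)}(t)\in\mathcal{A},\,\tau_C>t\}}\,\mathbb{P}_{\mathbf{X}^{(\lambda)}(t)}\left(\tau_C=\infty\right)\right].
\end{align*}
Dividing by $\mathbb{P}_x(\tau_C=\infty)$ and using the definition of conditional probability yields the first displayed identity of the statement.

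To obtain the explicit form (\ref{ConditionedSemigroup}), I would substitute into the identity above the Karlin-McGregor/Doob $h$-transform expression already used in the proof of Proposition \ref{AsymptoticProbability},
\begin{align*}
\mathbb{P}_x\left(\mathbf{X}^{(\lambda)}(t)\in dy,\,\tau_C>t\right)=e^{-t\sum_{i=1}^N\lambda_i}\prod_{i=1}^N\frac{\psi_{\lambda_i}(y_i)}{\psi_{\lambda_i}(x_i)}\det\left(p_t(x_i,y_j)\right)_{i,j=1}^N dy,
\end{align*}
together with the closed formula $\mathbb{P}_y(\tau_C=\infty)=\det\left(\psi_{\lambda_i}(y_j)\right)_{i,j=1}^N\big/\prod_{i=1}^N\psi_{\lambda_i}(y_i)$ from Proposition \ref{AsymptoticProbability} (and the analogous expression in the denominator for $x$). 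The products $\prod_i\psi_{\lambda_i}(y_i)$ in the numerator and $\prod_i\psi_{\lambda_i}(x_i)$ in the denominator then cancel cleanly, and what remains is precisely the right-hand side of (\ref{ConditionedSemigroup}).

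No substantive obstacle remains: the two genuinely non-trivial inputs, namely the closed-form non-collision probability and its strict positivity, are already in hand from Proposition \ref{AsymptoticProbability} and Corollary \ref{StrictPositivity}, while the Markov property step is standard. The only small point worth noting is that the formula for $\mathfrak{P}_t^{(N),(\lambda_1,\dots,\lambda_N)}(x,dy)$ extends from $y\in\mathbb{W}_N$ to $y\in\overline{\mathbb{W}}_N$ without ambiguity, since the determinant $\det(\psi_{\lambda_i}(y_j))_{i,j=1}^N$ vanishes by antisymmetry whenever two coordinates of $y$ coincide; thus the boundary of $\mathbb{W}_N$ carries no mass and can be included or excluded at will.
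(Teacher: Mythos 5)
Your proposal is correct and follows essentially the same route as the paper's: apply the Markov property at time $t$, use strict positivity from Corollary \ref{StrictPositivity} to justify the conditioning, and then substitute the Karlin–McGregor formula together with the explicit non-collision probability from Proposition \ref{AsymptoticProbability} and cancel the products of $\psi_{\lambda_i}$. The only cosmetic difference is that you invoke the Markov property directly for the $\mathcal{F}_\infty$-measurable event $\{\tau_C=\infty\}$, while the paper writes it as a limit $s\to\infty$ of conditioning on $\{\tau_C>t+s\}$ and passes inside the integral by bounded convergence; these are the same argument.
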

\begin{proof} Since $\mathbb{P}_x\left(\tau_C=\infty\right)>0$ from Corollary \ref{StrictPositivity}, we then have, for any $t>0$
\begin{align*}
\frac{\mathbb{P}_y\left(\tau_C>s\right)}{\mathbb{P}_x\left(\tau_C>t+s\right)}\overset{s\to \infty}{\longrightarrow}\frac{\mathbb{P}_y\left(\tau_C=\infty\right)}{\mathbb{P}_x\left(\tau_C=\infty\right)}
\end{align*}
and we can use bounded convergence to obtain
\begin{align*}
&\mathbb{P}_x\left(\left(\mathsf{x}^{\lambda_1}_1(t),\dots,\mathsf{x}^{\lambda_N}_N(t)\right)\in \mathcal{A}|\tau_C=\infty\right)=\lim_{s\to \infty}\mathbb{P}_x\left(\left(\mathsf{x}^{\lambda_1}_1(t),\dots,\mathsf{x}^{\lambda_N}_N(t)\right)\in \mathcal{A}|\tau_C>t+s\right)\\
&=\lim_{s\to \infty}\int_{\mathcal{A}}^{}\mathbb{P}_x\left(\left(\mathsf{x}^{\lambda_1}_1(t),\dots,\mathsf{x}^{\lambda_N}_N(t)\right)\in dy,\tau_C>t\right)\frac{\mathbb{P}_y\left(\tau_C>s\right)}{\mathbb{P}_x\left(\tau_C>t+s\right)}\\
&=\int_{\mathcal{A}}^{}\mathbb{P}_x\left(\left(\mathsf{x}^{\lambda_1}_1(t),\dots,\mathsf{x}^{\lambda_N}_N(t)\right)\in dy,\tau_C>t\right)\frac{\mathbb{P}_y\left(\tau_C=\infty\right)}{\mathbb{P}_x\left(\tau_C=\infty\right)}.
\end{align*}
The form of the transition kernel then follows from Proposition \ref{AsymptoticProbability}.
\end{proof}

We now give the proof of Theorem \ref{ConditionedBesselProcess}.

\begin{proof}[Proof of Theorem \ref{ConditionedBesselProcess}]
We simply need to check the assumptions of Proposition \ref{AsymptoticProbability}. It is clear that (\ref{smoothness}) holds. Moreover, the asymptotic ordering (\textbf{AO}) is a consequence of the following result of Watanabe, see \cite{Watanabe}, where $\mathsf{x}^{\lambda}$ is a $\textnormal{BESQ}_{\lambda}(\delta)$ process with $\delta>0$ (note that the limit does not depend on $\delta$):
\begin{align}\label{WatanabeAsymptotic}
    \mathbb{P}_x\left(\lim_{t\to \infty}\frac{\mathsf{x}^\lambda(t)}{t^2}=2\lambda\right)=1, \ \  \forall x \in [0,\infty).
\end{align}
Finally, (\textbf{BC}) is a consequence of Lemma \ref{LemBoundaryBehaviour} in the sequel.  The condition (\textbf{YW}), given in Definition \ref{YWDef}, that is required to apply Lemma \ref{LemBoundaryBehaviour} will be checked explicitly for $\textnormal{BESQ}_\lambda(\delta)$ in the proof of Theorem \ref{InteractingDiffusionsThm} in Section \ref{SectionProofInteracting}.
\end{proof}

\section{Interacting diffusions proofs}\label{SectionInteractingDiffusions}
In this section we prove Theorem \ref{InteractingDiffusionsThm}. We do this in the more general setting of one-dimensional diffusions in $(0,\infty)$ with $0$ an entrance and $\infty$ a natural boundary point. The computations that follow become more transparent if performed in this general setting rather than using the explicit formulae in the $\textnormal{BESQ}(\delta)$ case (which is how we discovered the result in the first place). It would also be possible to have natural analogues of the constructions that follow with more general boundary conditions (with some modifications in the statements and assumptions) but keeping track of everything becomes very cumbersome and we chose to restrict to the present setting. 

We need a few preliminaries but first we give a little roadmap to this section. The main result of the section is Proposition \ref{GeneralInteracting} whose conditions we check to prove Theorem \ref{InteractingDiffusionsThm}. We prove Proposition \ref{GeneralInteracting} by induction using two key results, Propositions \ref{TwoLevel1} and \ref{TwoLevel2}. These propositions are proven by combining some results from \cite{InterlacingDiffusions} along with Lemmas \ref{LemBoundaryBehaviour} and \ref{SemigroupObs}, the construction in Definition \ref{MarkovKernelDef} and the discussion between equations (\ref{Inter2}) and (\ref{DoobKMsemi1}). Finally, the basic data required to apply Proposition \ref{GeneralInteracting} is recalled before its formal statement.

\subsection{Background on one-dimensional diffusions}\label{SectionBackground}
We assume throughout this section that we are given a one-dimensional diffusion process generator $\mathsf{L}$ in $(0,\infty)$:
\begin{align*}
 \mathsf{L}=\mathsf{a}(x)\frac{d^2}{dx^2}+\mathsf{b}(x)\frac{d}{dx},
\end{align*}
satisfying (\ref{smoothness}), so that $0$ is an entrance boundary point while $\infty$ is a natural boundary point, see \cite{ItoMckean,BorodinSalminen,StochasticBook,EthierKurtz}.  As mentioned already, it is well-known, see for example \cite{RevuzYor,SurveyBessel}, that the $\textnormal{BESQ}(\delta)$ process for $\delta\ge 2$, has this boundary behaviour. 

We denote by $\mathfrak{s}'$ the derivative of its scale function $\mathfrak{s}$, which is defined up to a multiplicative constant (encoded by the constant $c \in (0,\infty)$ below which is arbitrary but fixed throughout this section) and given by the explicit formula, see \cite{ItoMckean,BorodinSalminen,StochasticBook,EthierKurtz},
\begin{align}
 \mathfrak{s}'(x)=\exp\left(-\int_{c}^{x}\frac{\mathsf{b}(y)}{\mathsf{a}(y)}dy\right).
\end{align}
We denote by $\mathfrak{m}$ the density of its speed measure, with respect to Lebesgue measure, given by the formula (with the same $c$ as above), see \cite{ItoMckean,BorodinSalminen,StochasticBook,EthierKurtz},
\begin{align}\label{speedmeasure}
 \mathfrak{m}(x)=\frac{1}{\mathsf{a}(x)\mathfrak{s}'(x)}=\frac{1}{\mathsf{a}(x)}\exp\left(\int_{c}^{x}\frac{\mathsf{b}(y)}{\mathsf{a}(y)}dy\right). 
\end{align}
We denote by $p_t(x,y)$ its transition density with respect to Lebesgue measure in $(0,\infty)$. We denote by $\mathfrak{m}^{\psi_\lambda}$ and $\mathfrak{s}^{\psi_\lambda}$ the corresponding quantities for the $\mathsf{L}^{\psi_\lambda}$-diffusion. For example a little computation gives $\mathfrak{m}^{\psi_{\lambda}}(x)=\frac{1}{\psi^2_{\lambda}(c)}\psi_{\lambda}^2(x)\mathfrak{m}(x)$.

In the sequel we will also need the following well-known condition due to Yamada and Watanabe that gives the $\mathsf{L}$-diffusion as the unique strong solution to a non-exploding SDE, see \cite{IkedaWatanabe,RevuzYor}.

\begin{defn}\label{YWDef} We will say that an ordered pair of functions $(f,g)$ satisfy the Yamada-Watanabe condition and denote this by (\textbf{YW}) if, for some increasing Borel function $\rho:(0,\infty)\to (0,\infty)$ with $\int_{0^+}\frac{1}{\rho(u)}du=\infty$, we have
\begin{align*}
    |f(x)-f(y)|^2\le \rho\left(|x-y|\right), \ \ |f(x)|^2 \le C_1\left(1+x^2\right),  \ \ |g(x)-g(y)|\le C_2|x-y|,
\end{align*}
for some constants $C_1, C_2$. We will say that a one-dimensional diffusion process generator $Q=a(x)\frac{d^2}{dx^2}+b(x)\frac{d}{dx}$ satisfies (\textbf{YW}) if the pair $(\sqrt{a},b)$ satisfies (\textbf{YW}).
\end{defn}
We will assume throughout that all the diffusions we encounter satisfy (\textbf{YW}).

We now define the following involution operation on one-dimensional diffusions. Let $Q=a(x)\frac{d^2}{dx^2}+b(x)\frac{d}{dx}$ be a diffusion process generator in $[0,\infty)$ so that $\infty$ is natural and $0$ is either an entrance or an exit boundary point, see \cite{ItoMckean,BorodinSalminen,StochasticBook,EthierKurtz} for more on this terminology. We then define its dual diffusion process generator $\widehat{Q}$ by
\begin{align}\label{DualDiffusion}
\widehat{Q}=a(x)\frac{d^2}{dx^2}+\left[a'(x)-b(x)\right]\frac{d}{dx}.
\end{align}
We note that this is well-defined since the form of the generator $\widehat{Q}$ uniquely determines its boundary classification at $0$ and $\infty$ using the integral criterion of Feller, see for example the Appendix in \cite{InterlacingDiffusions}. In particular, $\infty$ stays a natural boundary while if $0$ is an entrance boundary for the $Q$-diffusion it becomes exit for the $\widehat{Q}$-diffusion and vice-versa. The example to keep in mind is again the $\textnormal{BESQ}(\delta)$ process for $\delta\ge 2$. Its dual diffusion is the $\textnormal{BESQ}(2-\delta)$ process (see \cite{SurveyBessel} where this was studied in detail) absorbed at the origin and vice-versa.

We denote by $\hat{\mathfrak{m}}$ and $\hat{\mathfrak{s}}'$ the density of the speed measure and the derivative of the scale function respectively of the $\widehat{\mathsf{L}}$-diffusion. We write $\hat{p}_t(x,y)$ for its transition density with respect to Lebesgue measure in $(0,\infty)$. Observe that, the transition kernel of the $\widehat{\mathsf{L}}$-diffusion has an atom at the origin (since the $\widehat{\mathsf{L}}$-diffusion is absorbed there as $0$ is an exit boundary) and thus the density $\hat{p}_t(x,y)$ can also be viewed as the transition kernel of the $\widehat{\mathsf{L}}$-diffusion killed when it hits $0$ (instead of absorbed, since we disregard the atom). We also denote by $\widehat{\mathfrak{m}^{\psi_\lambda}}$ and $\widehat{\mathfrak{s}^{\psi_\lambda}}$ the corresponding quantities for the $\widehat{\mathsf{L}^{\psi_\lambda}}$-diffusion. A small computation for example gives $\widehat{\mathfrak{m}^{\psi_{\lambda}}}(x)=\psi^2_{\lambda}(c)\psi_{\lambda}^{-2}(x)\hat{\mathfrak{m}}(x)$.

Finally, it will be convenient to introduce the following notation, for a (smooth enough) function $g$,
\begin{align*}
    \mathcal{D}_g=\frac{1}{g'(x)}\frac{d}{dx}, \ \  \mathsf{D}_g=\frac{1}{g(x)}\frac{d}{dx}.
\end{align*}
The exact computations that come up in the next section become very clean if written in terms of these operators because of the following fact. Namely, a little calculation gives $\hat{\mathfrak{s}}'(x)=\mathsf{a}(c)\mathfrak{m}(x)$ and $\hat{\mathfrak{m}}(x)=\frac{1}{\mathsf{a}(c)}\mathfrak{s}'(x)$ and also
\begin{align}\label{GenRep}
 \mathsf{L}=\mathsf{D}_{\mathfrak{m}}\mathcal{D}_{\mathfrak{s}}\ \  \textnormal{ and } \ \  \widehat{\mathsf{L}}=\mathsf{D}_{\hat{\mathfrak{m}}}\mathcal{D}_{\hat{\mathfrak{s}}}=\mathcal{D}_{\mathfrak{s}}\mathsf{D}_{\mathfrak{m}},
\end{align}
and similarly for $\mathsf{L}^{\psi_\lambda}$.

\subsection{Two-level dynamics}

In this section we investigate certain two-level dynamics on interlacing configurations which are the basic building blocks for our construction. An essential input to our study are certain two-level couplings from \cite{InterlacingDiffusions} for Karlin-McGregor semigroups associated to a diffusion $Q$ and its dual $\widehat{Q}$ (under certain boundary conditions) so that the corresponding diffusions interlace. These were first discovered in the case of Brownian motion (which is self-dual) in \cite{Warren} and extended to the general setting in \cite{InterlacingDiffusions}. These two-level results from \cite{Warren,InterlacingDiffusions} take as input a strictly positive eigenfunction of a Karlin-McGregor semigroup. Then, one needs to put these two-level dynamics inductively together, in a consistent way, to obtain the desired result in the interlacing array. This is however a non-trivial task and at present there is no systematic way of doing this in general\footnote{In particular, this is the reason why although general two-level couplings were considered in \cite{InterlacingDiffusions} only special explicit examples of dynamics in whole interlacing arrays were presented there.}. The main contribution of the present paper is the construction of these eigenfunctions in such a consistent way for the general setting considered in this section. We note that this does not appear to be possible for a triangular interlacing array as alluded to in the introduction. 

We need some notation and terminology. We define
\begin{align*}
 \overline{\mathbb{W}}_N=\{x=(x_1,\dots,x_N)\in (0,\infty)^N : x_1\le \cdots \le x_N \},
\end{align*}
and similarly consider the following spaces of two-level interlacing configurations
\begin{align*}
\mathbb{W}_{N,N+1}&=\left\{(x,y)\in \mathbb{W}_N\times \mathbb{W}_{N+1}:x\prec y \right\}, & \tilde{\mathbb{W}}_{N,N+1}&=\left\{(x,y)\in \mathbb{W}_N\times \overline{\mathbb{W}}_{N+1}:x\prec y \right\}, \\
\mathbb{W}_{N,N}&=\left\{(x,y)\in \mathbb{W}_N\times \mathbb{W}_{N}:x\prec y \right\},  & \tilde{\mathbb{W}}_{N,N}&=\left\{(x,y)\in \mathbb{W}_N\times \overline{\mathbb{W}}_{N}:x\prec y \right\}.
\end{align*}
Moreover, suppose that we are given a sequence of non-negative numbers $\lambda_i$ satisfying $\lambda_1<\lambda_2<\lambda_3<\cdots$. 

We begin by computing certain multiple integrals under an interlacing constraint. We will rephrase these integrals in Proposition \ref{PropIntegration} in the sequel which will be important in the subsequent developments.

\begin{lem}\label{IntProp1}Let $\mathsf{L}$ be a one-dimensional diffusion process generator satisfying (\ref{smoothness}) in $(0,\infty)$ and with positive increasing eigenfunctions $\psi_\lambda$ as in (\ref{EigenfunctionDef}). Then, we have for $y\in \overline{\mathbb{W}}_{n+1}$:
\begin{align}\label{Integral1}
\int_{x\prec y}^{}\prod_{i=1}^{n}\widehat{\mathfrak{m}^{\psi_{\lambda_{n+1}}}}(x_i)(-1)^n\det\left(\mathsf{D}_{\widehat{\mathfrak{m}^{\psi_{\lambda_{n+1}}}}}\left(\frac{\psi_{\lambda_i}}{\psi_{\lambda_{n+1}}}\right)(x_j)\right)^n_{i,j=1}dx=\det \left(\left(\frac{\psi_{\lambda_i}}{\psi_{\lambda_{n+1}}}\right)(y_j)\right)_{i,j=1}^{n+1}.
\end{align}
\end{lem}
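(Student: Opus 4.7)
The plan is to reduce the integral to a familiar determinantal identity after first simplifying the integrand. Observe that, using $\mathsf{D}_g f = f'/g$, each factor of $\widehat{\mathfrak{m}^{\psi_{\lambda_{n+1}}}}(x_j)$ can be absorbed into the $j$-th column of the determinant, cancelling the $\mathsf{D}_{\widehat{\mathfrak{m}^{\psi_{\lambda_{n+1}}}}}$'s entirely. Writing $f_i(x) = \psi_{\lambda_i}(x)/\psi_{\lambda_{n+1}}(x)$, the integrand collapses to
\begin{align*}
 (-1)^n \det\bigl(f_i'(x_j)\bigr)_{i,j=1}^n,
\end{align*}
and in particular the complicated weight $\widehat{\mathfrak{m}^{\psi_{\lambda_{n+1}}}}$ disappears.

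Next I would exploit the crucial feature of the domain: $x \prec y$ with $y\in\overline{\mathbb{W}}_{n+1}$ means exactly $x_j \in [y_j, y_{j+1}]$ for $j=1,\dots,n$, so the integration factorises across the coordinates $x_j$ (the strict ordering on $\mathbb{W}_n$ is automatic almost everywhere). Expanding the determinant as an alternating sum over $\sigma\in\mathfrak{S}(n)$, the integral $\int_{y_j}^{y_{j+1}} f_{\sigma(j)}'(x_j)\,dx_j = f_{\sigma(j)}(y_{j+1})-f_{\sigma(j)}(y_j)$ produces
\begin{align*}
 (-1)^n \det\bigl(f_i(y_{j+1})-f_i(y_j)\bigr)_{i,j=1}^n.
\end{align*}

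Finally, I would identify this expression with the right-hand side by the following linear algebra trick, which uses the key fact that $f_{n+1}\equiv 1$. Consider the $(n+1)\times(n+1)$ matrix $M_{ij}=f_i(y_j)$, whose last row is identically $1$. Performing the column operations $C_j \leftarrow C_j - C_{j-1}$ for $j=n+1,n,\dots,2$ (from right to left, preserving the determinant) makes the last row equal to $(1,0,\dots,0)$, while for $i\le n$ and $j\ge 2$ the entries become $f_i(y_j)-f_i(y_{j-1})$. Cofactor expansion along the last row gives
\begin{align*}
 \det\bigl(f_i(y_j)\bigr)_{i,j=1}^{n+1}=(-1)^n\det\bigl(f_i(y_{j+1})-f_i(y_j)\bigr)_{i,j=1}^{n},
\end{align*}
and multiplying the integral computation above by the remaining $(-1)^n$ produces exactly the right-hand side of (\ref{Integral1}).

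There is no genuine obstacle here: the only thing to double-check is the sign $(-1)^{(n+1)+1}=(-1)^n$ in the cofactor expansion, which matches the $(-1)^n$ prefactor already present in the integrand. The identity does not require any analytic input beyond the smoothness of $\psi_{\lambda_i}/\psi_{\lambda_{n+1}}$ (so that the fundamental theorem of calculus applies) and the trivial observation $\psi_{\lambda_{n+1}}/\psi_{\lambda_{n+1}}\equiv 1$, which creates the row of ones that drives the cofactor expansion.
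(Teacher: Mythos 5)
Your proof is correct and follows essentially the same route as the paper: absorb the speed-measure weights into the columns so the operator $\mathsf{D}_{\widehat{\mathfrak{m}^{\psi_{\lambda_{n+1}}}}}$ cancels, use the factorisation of the interlacing domain into intervals $[y_j,y_{j+1}]$ to apply the fundamental theorem of calculus columnwise, and then recover the $(n+1)\times(n+1)$ determinant via column operations exploiting $\psi_{\lambda_{n+1}}/\psi_{\lambda_{n+1}}\equiv 1$. The only cosmetic differences are that the paper writes the multilinearity step directly and performs the column operations $C_j\leftarrow C_j-C_{j+1}$ instead of your $C_j\leftarrow C_j-C_{j-1}$, which trades the explicit $(-1)^n$ cofactor sign you track for a sign absorbed inside the columns.
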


\begin{proof}
By multilinearity of the determinant the left hand side of (\ref{Integral1}) is equal to
\begin{align*}
    \det\left(-\int_{y_j}^{y_{j+1}}\widehat{\mathfrak{m}^{\psi_{\lambda_{n+1}}}}(z)\mathsf{D}_{\widehat{\mathfrak{m}^{\psi_{\lambda_{n+1}}}}}\left(\frac{\psi_{\lambda_i}}{\psi_{\lambda_{n+1}}}\right)(z)dz\right)_{i,j=1}^n=\det\left(\left(\frac{\psi_{\lambda_i}}{\psi_{\lambda_{n+1}}}\right)(y_j)-\left(\frac{\psi_{\lambda_i}}{\psi_{\lambda_{n+1}}}\right)(y_{j+1})\right)_{i,j=1}^n.
\end{align*}
The statement then follows by column operations on the right hand side of (\ref{Integral1}).
\end{proof}

\begin{lem}\label{IntProp2} Let $\mathsf{L}$ be a one-dimensional diffusion process generator satisfying (\ref{smoothness}) in $(0,\infty)$ so that $0$ is an entrance boundary point and with positive increasing eigenfunctions $\psi_\lambda$ as in (\ref{EigenfunctionDef}). Then, we have for $y\in \overline{\mathbb{W}}_n$
\begin{align}\label{Integral2}
\int_{x\prec y}^{}\prod_{i=1}^{n}\mathfrak{m}^{\psi_{\lambda_{n+1}}}(x_i)\det \left(\left(\frac{\psi_{\lambda_i}}{\psi_{\lambda_{n+1}}}\right)(x_j)\right)_{i,j=1}^{n}dx=\mathsf{c}_n \times (-1)^n \det \left(\mathsf{D}_{\widehat{\mathfrak{m}^{\psi_{\lambda_{n+1}}}}}\left(\frac{\psi_{\lambda_i}}{\psi_{\lambda_{n+1}}}\right)(y_j)\right)_{i,j=1}^{n},
\end{align}
where the multiplicative constant $\mathsf{c}_n$ is given by
\begin{align}\label{constant}
\mathsf{c}_n=\mathsf{c}_n(c;\lambda_1,\dots,\lambda_{n+1})=\prod_{i=1}^n\frac{1}{\left(\lambda_{n+1}-\lambda_{i}\right)\mathsf{a}(c)}.
\end{align}
\end{lem}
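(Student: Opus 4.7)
The plan is to mirror the proof of Lemma \ref{IntProp1}: convert the interlacing integral into a determinant of one-dimensional integrals by multilinearity, identify a clean primitive for those entries using the eigenvalue equation and the factorization (\ref{GenRep}), and telescope the resulting differences via column operations. Specifically, the interlacing constraint $x \prec y$ with $y \in \overline{\mathbb{W}}_n$ forces $x_j \in [y_{j-1}, y_j]$ (with the convention $y_0 := 0$), so up to a null set the interlacing polytope coincides with $\prod_{j=1}^n [y_{j-1}, y_j]$. Pushing $\prod_j \mathfrak{m}^{\psi_{\lambda_{n+1}}}(x_j)$ into the $j$-th column of the determinant and integrating column-by-column yields
\begin{align*}
\textnormal{LHS of }(\ref{Integral2}) = \det\left(\int_{y_{j-1}}^{y_j} \mathfrak{m}^{\psi_{\lambda_{n+1}}}(z)\left(\frac{\psi_{\lambda_i}}{\psi_{\lambda_{n+1}}}\right)(z)\,dz\right)_{i,j=1}^n.
\end{align*}

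To identify the primitive, I would note that $\mathsf{L}\psi_{\lambda_i} = \lambda_i \psi_{\lambda_i}$ combined with the definition of the Doob transform gives $\mathsf{L}^{\psi_{\lambda_{n+1}}}(\psi_{\lambda_i}/\psi_{\lambda_{n+1}}) = (\lambda_i - \lambda_{n+1})(\psi_{\lambda_i}/\psi_{\lambda_{n+1}})$. Using the factorization $\mathsf{L}^{\psi_{\lambda_{n+1}}} = \mathsf{D}_{\mathfrak{m}^{\psi_{\lambda_{n+1}}}}\mathcal{D}_{\mathfrak{s}^{\psi_{\lambda_{n+1}}}}$ from (\ref{GenRep}) together with the relation $(\mathfrak{s}^{\psi_{\lambda_{n+1}}})' = \mathsf{a}(c)\,\widehat{\mathfrak{m}^{\psi_{\lambda_{n+1}}}}$ derived in Section \ref{SectionBackground}, one verifies that the function $G_i := \mathsf{D}_{\widehat{\mathfrak{m}^{\psi_{\lambda_{n+1}}}}}(\psi_{\lambda_i}/\psi_{\lambda_{n+1}})$ satisfies $G_i'(z) = (\lambda_i - \lambda_{n+1})\mathsf{a}(c)\,\mathfrak{m}^{\psi_{\lambda_{n+1}}}(z)(\psi_{\lambda_i}/\psi_{\lambda_{n+1}})(z)$. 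Granting $G_i(0^+) = 0$ (addressed below), the $(i,j)$-entry of the determinant above equals $[G_i(y_j) - G_i(y_{j-1})]/[(\lambda_i - \lambda_{n+1})\mathsf{a}(c)]$. Pulling the row-dependent constant $\prod_{i=1}^n(\lambda_i - \lambda_{n+1})^{-1}\mathsf{a}(c)^{-1} = (-1)^n\mathsf{c}_n$ outside and performing the column operation that replaces column $j$ by the sum of columns $1, \dots, j$ telescopes the entries $G_i(y_j) - G_i(y_{j-1})$ into $G_i(y_j)$, producing the right-hand side of (\ref{Integral2}).

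The main obstacle is the boundary identity $G_i(0^+) = 0$, which is precisely the place the entrance hypothesis at $0$ for the $\mathsf{L}$-diffusion enters the argument; without it the $j=1$ column carries a nonzero $G_i(0^+)$ remainder and the columns fail to telescope. A short calculation using the formulas from Section \ref{SectionBackground} gives
\begin{align*}
G_i(x) = \frac{\mathsf{a}(c)}{\psi_{\lambda_{n+1}}^2(c)}\,\big[\psi_{\lambda_{n+1}}(x)\mathcal{D}_{\mathfrak{s}}\psi_{\lambda_i}(x) - \psi_{\lambda_i}(x)\mathcal{D}_{\mathfrak{s}}\psi_{\lambda_{n+1}}(x)\big],
\end{align*}
so the required vanishing reduces to $\mathcal{D}_\mathfrak{s}\psi_\lambda(0^+) = 0$ for the increasing eigenfunction at an entrance boundary, combined with finiteness of $\psi_\lambda(0^+)$ there; these are the standard boundary conditions furnished by Feller's theory (see \cite{ItoMckean, BorodinSalminen}). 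In the $\textnormal{BESQ}(\delta)$ case with $\delta \ge 2$ of primary interest this is checked directly: the series expansion of $I_\nu$ yields a finite $\psi_\lambda'(0^+)$ while $\mathfrak{s}'(x) = (c/x)^{\delta/2} \to \infty$, so the ratio vanishes.
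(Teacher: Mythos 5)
Your proposal is correct and takes essentially the same route as the paper: multilinearity reduces the interlacing integral to a determinant of one-dimensional integrals over $[y_{j-1},y_j]$, the primitive $G_i = \mathsf{D}_{\widehat{\mathfrak{m}^{\psi_{\lambda_{n+1}}}}}(\psi_{\lambda_i}/\psi_{\lambda_{n+1}})$ is identified via the eigenfunction relation and the factorization (\ref{GenRep}), the entrance-boundary condition $\mathcal{D}_{\mathfrak{s}}\psi_\lambda(0^+)=0$ yields $G_i(0^+)=0$, and column operations telescope. The paper simply packages the primitive-plus-boundary step as a standalone Lemma \ref{EigenRel} (together with a finiteness Lemma \ref{LemIntegrable}) which you have inlined.
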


\begin{proof}
By multilinearity of the determinant the left hand side of (\ref{Integral2}) is equal to, with the convention $y_0=0$,
\begin{align*}
    &\det\left(   \int_{y_{j-1}}^{y_j}\frac{\psi_{\lambda_i}(z)}{\psi_{\lambda_{n+1}}(z)} \mathfrak{m}^{\psi_{\lambda_{n+1}}}(z)dz\right)_{i,j=1}^n=\\
    &\det\left(\frac{1}{\left(\lambda_i-\lambda_{n+1}\right)\mathsf{a}(c)}\mathsf{D}_{\widehat{\mathfrak{m}^{\psi_{\lambda_{n+1}}}}}\left(\frac{\psi_{\lambda_i}}{\psi_{\lambda_{n+1}}}\right)(y_j)-\frac{1}{\left(\lambda_i-\lambda_{n+1}\right)\mathsf{a}(c)}\mathsf{D}_{\widehat{\mathfrak{m}^{\psi_{\lambda_{n+1}}}}}\left(\frac{\psi_{\lambda_i}}{\psi_{\lambda_{n+1}}}\right)(y_{j-1})\right)_{i,j=1}^n,
\end{align*}
where we have used Lemma \ref{EigenRel} below. The statement then follows by column operations.
\end{proof}

\begin{lem}\label{EigenRel}  Let $\mathsf{L}$ be a one-dimensional diffusion process generator satisfying (\ref{smoothness}) in $(0,\infty)$ so that $0$ is an entrance boundary point and with positive increasing eigenfunctions $\psi_\lambda$ as in (\ref{EigenfunctionDef}). Then, we have for $y\ge 0$,
\begin{align}\label{EigenRelDisplay}
   \int_0^y\frac{\psi_{\lambda_1}(x)}{\psi_{\lambda_2}(x)} \mathfrak{m}^{\psi_{\lambda_2}}(x)dx=\frac{1}{\left(\lambda_1-\lambda_2\right)\mathsf{a}(c)}\mathsf{D}_{\widehat{\mathfrak{m}^{\psi_{\lambda_2}}}}\left(\frac{\psi_{\lambda_1}}{\psi_{\lambda_2}}\right)(y).
\end{align}
\end{lem}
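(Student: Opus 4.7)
The plan is to read the identity directly off the eigenfunction relation for $\mathsf{L}^{\psi_{\lambda_2}}$ acting on the ratio $\psi_{\lambda_1}/\psi_{\lambda_2}$, and then translate the natural operator $\mathcal{D}_{\mathfrak{s}^{\psi_{\lambda_2}}}$ produced by the calculation into $\mathsf{D}_{\widehat{\mathfrak{m}^{\psi_{\lambda_2}}}}$ via the factorization (\ref{GenRep}).

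\smallskip

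First, since $\mathsf{L}\psi_{\lambda_i}=\lambda_i\psi_{\lambda_i}$, a direct computation gives
\begin{align*}
\mathsf{L}^{\psi_{\lambda_2}}\!\left(\frac{\psi_{\lambda_1}}{\psi_{\lambda_2}}\right)
=\psi_{\lambda_2}^{-1}\mathsf{L}\psi_{\lambda_1}-\lambda_2\frac{\psi_{\lambda_1}}{\psi_{\lambda_2}}
=(\lambda_1-\lambda_2)\frac{\psi_{\lambda_1}}{\psi_{\lambda_2}}.
\end{align*}
Using the factorization $\mathsf{L}^{\psi_{\lambda_2}}=\mathsf{D}_{\mathfrak{m}^{\psi_{\lambda_2}}}\mathcal{D}_{\mathfrak{s}^{\psi_{\lambda_2}}}$ from (\ref{GenRep}), this eigenrelation unpacks as
\begin{align*}
\frac{d}{dx}\left[\mathcal{D}_{\mathfrak{s}^{\psi_{\lambda_2}}}\!\left(\frac{\psi_{\lambda_1}}{\psi_{\lambda_2}}\right)(x)\right]
=(\lambda_1-\lambda_2)\,\frac{\psi_{\lambda_1}(x)}{\psi_{\lambda_2}(x)}\,\mathfrak{m}^{\psi_{\lambda_2}}(x).
\end{align*}

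\smallskip

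Next I would integrate this identity from $0$ to $y$. To conclude, I need to know that the boundary term at $0$ vanishes, i.e.\
\begin{align*}
\mathcal{D}_{\mathfrak{s}^{\psi_{\lambda_2}}}\!\left(\frac{\psi_{\lambda_1}}{\psi_{\lambda_2}}\right)(0^+)=0.
\end{align*}
This is the analytic counterpart of the assumption that $0$ is an entrance boundary for the $\mathsf{L}^{\psi_{\lambda_2}}$-diffusion (condition (\textbf{BC})): at an entrance point $\mathfrak{s}^{\psi_{\lambda_2}}(0^+)=-\infty$, and the standard Neumann-type boundary condition characterises eigenfunctions via vanishing of the scale derivative. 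Concretely, writing $\mathcal{D}_{\mathfrak{s}^{\psi_{\lambda_2}}}(\psi_{\lambda_1}/\psi_{\lambda_2})=\mathsf{a}\,(\psi_{\lambda_1}/\psi_{\lambda_2})'/\mathfrak{s}'^{\,\psi_{\lambda_2}}$ and using that $\psi_{\lambda_1}(0^+)$, $\psi_{\lambda_2}(0^+)$ are finite and strictly positive while the entrance condition forces $\mathfrak{s}'^{\,\psi_{\lambda_2}}(x)\to\infty$ fast enough near $0$, the quotient tends to $0$. This step is the main obstacle; it will be clean in the explicit $\textnormal{BESQ}_\lambda(\delta)$ case (where a series expansion of $\phi_\lambda^{(\nu)}$ gives it immediately) and holds in general via Feller's boundary classification.

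\smallskip

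Granting the boundary vanishing, integration yields
\begin{align*}
\int_0^y\frac{\psi_{\lambda_1}(x)}{\psi_{\lambda_2}(x)}\mathfrak{m}^{\psi_{\lambda_2}}(x)\,dx
=\frac{1}{\lambda_1-\lambda_2}\,\mathcal{D}_{\mathfrak{s}^{\psi_{\lambda_2}}}\!\left(\frac{\psi_{\lambda_1}}{\psi_{\lambda_2}}\right)(y).
\end{align*}
Finally, I translate $\mathcal{D}_{\mathfrak{s}^{\psi_{\lambda_2}}}$ into $\mathsf{D}_{\widehat{\mathfrak{m}^{\psi_{\lambda_2}}}}$. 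From the relations recalled in Section \ref{SectionBackground}, namely $(\mathfrak{s}^{\psi_{\lambda_2}})'(x)=\psi_{\lambda_2}^2(c)\psi_{\lambda_2}^{-2}(x)\mathfrak{s}'(x)$ and $\widehat{\mathfrak{m}^{\psi_{\lambda_2}}}(x)=\psi_{\lambda_2}^2(c)\psi_{\lambda_2}^{-2}(x)\hat{\mathfrak{m}}(x)=\mathsf{a}(c)^{-1}(\mathfrak{s}^{\psi_{\lambda_2}})'(x)$, one gets the operator identity
\begin{align*}
\mathsf{D}_{\widehat{\mathfrak{m}^{\psi_{\lambda_2}}}}=\mathsf{a}(c)\,\mathcal{D}_{\mathfrak{s}^{\psi_{\lambda_2}}},
\end{align*}
which substituted above produces exactly the right hand side of (\ref{EigenRelDisplay}), completing the proof.
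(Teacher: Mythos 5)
Your argument is the same proof as the paper's, read in the opposite direction: you integrate the eigenrelation
\begin{align*}
\mathsf{D}_{\mathfrak{m}^{\psi_{\lambda_2}}}\mathcal{D}_{\mathfrak{s}^{\psi_{\lambda_2}}}\Big(\tfrac{\psi_{\lambda_1}}{\psi_{\lambda_2}}\Big)=(\lambda_1-\lambda_2)\tfrac{\psi_{\lambda_1}}{\psi_{\lambda_2}}
\end{align*}
from $0$ to $y$; the paper differentiates the claimed identity, verifies equality of derivatives, then checks both sides vanish at $y=0$. Both routes hinge on the same single fact, $\mathcal{D}_{\mathfrak{s}^{\psi_{\lambda_2}}}(\psi_{\lambda_1}/\psi_{\lambda_2})(0^+)=0$.

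The one place your proposal is not airtight is precisely that boundary-vanishing step. You write $\mathcal{D}_{\mathfrak{s}^{\psi_{\lambda_2}}}(\psi_{\lambda_1}/\psi_{\lambda_2})=\mathsf{a}\,(\psi_{\lambda_1}/\psi_{\lambda_2})'/(\mathfrak{s}^{\psi_{\lambda_2}})'$ (the factor $\mathsf{a}$ is extraneous, since $\mathcal{D}_g=\frac{1}{g'}\frac{d}{dx}$) and then argue the quotient goes to $0$ because $(\mathfrak{s}^{\psi_{\lambda_2}})'\to\infty$ while the $\psi$'s are finite and positive at $0^+$. But $(\psi_{\lambda_1}/\psi_{\lambda_2})'$ involves $\psi_{\lambda_1}'$ and $\psi_{\lambda_2}'$, which are not obviously controlled near $0$, so this does not close. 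The paper avoids the issue by expanding the quotient rule and collecting terms as
\begin{align*}
\mathcal{D}_{\mathfrak{s}^{\psi_{\lambda_2}}}\Big(\tfrac{\psi_{\lambda_1}}{\psi_{\lambda_2}}\Big)(y)
=\psi_{\lambda_2}^2(c)\left[\psi_{\lambda_2}(y)\,\big(\mathcal{D}_{\mathfrak{s}}\psi_{\lambda_1}\big)(y)-\psi_{\lambda_1}(y)\,\big(\mathcal{D}_{\mathfrak{s}}\psi_{\lambda_2}\big)(y)\right],
\end{align*}
so that the only limits needed are $\lim_{y\to 0}\big(\mathcal{D}_{\mathfrak{s}}\psi_{\lambda}\big)(y)=0$ for the \emph{original} $\mathsf{L}$-diffusion, which is a textbook fact for an increasing eigenfunction at an entrance boundary (Table 1, Section 4.6, page 130 of \cite{ItoMckean}), together with $\psi_\lambda(0^+)$ being finite. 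You should replace your heuristic by this decomposition. Finally, the paper also first records (Lemma \ref{LemIntegrable}) that the integral on the left of (\ref{EigenRelDisplay}) is finite; in your version this comes out for free by monotone convergence once the boundary term is known to vanish, but it is worth saying so explicitly.
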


\begin{rmk}
We note that the equality (\ref{EigenRelDisplay}) is, as it should be, independent of the choice of the constant $c\in (0,\infty)$; recall that $\mathfrak{m}^{\psi_{\lambda_2}}$ depends on $c$ through a multiplicative constant. It can be checked directly that changing $c$ to $\tilde{c}$ simply multiplies each side by a corresponding constant so that (\ref{EigenRelDisplay}) holds with $c$ replaced by $\tilde{c}$.
\end{rmk}

We first check that the integral in (\ref{EigenRelDisplay}) is finite.

\begin{lem}\label{LemIntegrable}
For any $\lambda,\mu \in (0,\infty)$, $y\in (0,\infty)$:
\begin{align*}
  \int_0^y\frac{\psi_{\lambda}(x)}{\psi_{\mu}(x)} \mathfrak{m}^{\psi_{\mu}}(x)dx= \frac{1}{\psi^2_{\mu}(c)} \int_0^y\psi_{\lambda}(x)\psi_{\mu}(x) \mathfrak{m}(x)dx<\infty. 
\end{align*}
\end{lem}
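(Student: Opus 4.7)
The plan splits into two pieces. First, the stated equality is a routine substitution: using the relation $\mathfrak{m}^{\psi_{\mu}}(x) = \psi_{\mu}^2(x)\mathfrak{m}(x)/\psi_{\mu}^2(c)$ recorded just after (\ref{speedmeasure}) in Section \ref{SectionBackground}, the factor $\psi_{\lambda}(x)/\psi_{\mu}(x)$ combines with $\psi_{\mu}^2(x)$ to produce $\psi_{\lambda}(x)\psi_{\mu}(x)$ while the constant $1/\psi_{\mu}^2(c)$ survives. So the real content to verify is the finiteness of $\int_0^y \psi_{\lambda}(x)\psi_{\mu}(x)\mathfrak{m}(x)dx$ for every $y\in(0,\infty)$.

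Since everything in sight is smooth and positive on $(0,\infty)$ by (\ref{smoothness}) and the eigenfunction equation, the only possible issue is near $x=0$. I will exploit that $\psi_{\lambda}$ and $\psi_{\mu}$ are positive and \emph{increasing} (the latter by construction) to bound $\psi_{\lambda}(x)\psi_{\mu}(x)\leq \psi_{\lambda}(y)\psi_{\mu}(y)$ on $(0,y]$, which reduces the problem to showing $\int_0^y\mathfrak{m}(x)dx<\infty$.

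To close the argument I will invoke (\textbf{BC}) at $\lambda=0$: the hypothesis that $0$ is inaccessible for the $\mathsf{L}$-diffusion is, in the present section, sharpened to entrance. In Feller's classification this amounts to $I_1(0):=\int_0^c(\mathfrak{s}(c)-\mathfrak{s}(x))\mathfrak{m}(x)dx<\infty$ together with $I_2(0):=\int_0^c \mathfrak{s}'(x)\bigl(\int_x^c\mathfrak{m}(y)dy\bigr)dx=\infty$. A short check rules out $\mathfrak{s}(0^+)>-\infty$: if it held, then $\mathfrak{s}(c)-\mathfrak{s}(x)$ would be bounded above and, combined with $I_1(0)<\infty$, would force $\int_0^c\mathfrak{m}<\infty$; then $I_2(0)\leq \bigl(\int_0^c\mathfrak{m}\bigr)\bigl(\int_0^c\mathfrak{s}'\bigr)<\infty$, contradicting entrance. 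Hence $\mathfrak{s}(c)-\mathfrak{s}(x)\to\infty$ as $x\to 0^+$, so $\mathfrak{s}(c)-\mathfrak{s}(x)\geq 1$ on some $(0,\varepsilon)$, and consequently
\[
\int_0^{\varepsilon}\mathfrak{m}(x)dx\leq \int_0^{\varepsilon}(\mathfrak{s}(c)-\mathfrak{s}(x))\mathfrak{m}(x)dx\leq I_1(0)<\infty,
\]
while $[\varepsilon,y]$ contributes a finite amount by continuity. The only genuinely non-trivial point in the plan is precisely this extraction of local integrability of $\mathfrak{m}$ near an entrance boundary from the Feller criterion; once it is in hand the rest is bookkeeping.
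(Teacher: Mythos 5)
Your proof reaches the right conclusion and the first two steps (the substitution and the reduction by monotonicity) are sound. Your route, however, diverges from the paper's at the final finiteness check. The paper bounds only $\psi_\mu$ by its supremum on $[0,y]$ and then cites directly, from Table 1 on p.\ 130 of It\^{o}--McKean, that $\int_0^y \psi_\lambda(x)\mathfrak{m}(x)\,dx<\infty$ is one of the defining properties of an increasing $\lambda$-eigenfunction near an entrance boundary; your version pulls out both eigenfunctions and reduces to $\int_0^y \mathfrak{m}(x)\,dx<\infty$, derived from Feller's criterion. That reduction is legitimate and somewhat more self-contained than citing the table.

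The way you derive $\int_0^y\mathfrak{m}<\infty$ contains a genuine flaw, though. You argue that if $\mathfrak{s}(0^+)>-\infty$ then $\mathfrak{s}(c)-\mathfrak{s}(x)$ is bounded \emph{above}, and that this together with $I_1(0)<\infty$ forces $\int_0^c\mathfrak{m}<\infty$. That implication is false as written: an upper bound $\mathfrak{s}(c)-\mathfrak{s}(x)\le M$ only gives $I_1(0)\le M\int_0^c\mathfrak{m}$, a \emph{lower} bound on the mass integral. What you need is a positive \emph{lower} bound on $\mathfrak{s}(c)-\mathfrak{s}(x)$, and that holds unconditionally, with no contradiction argument at all: since $\mathfrak{s}$ is strictly increasing, $\mathfrak{s}(c)-\mathfrak{s}(x)\ge\mathfrak{s}(c)-\mathfrak{s}(\varepsilon)>0$ for all $x\in(0,\varepsilon]$ and any fixed $\varepsilon<c$, hence
\[
\int_0^\varepsilon\mathfrak{m}(x)\,dx\le\frac{I_1(0)}{\mathfrak{s}(c)-\mathfrak{s}(\varepsilon)}<\infty,
\]
and the remaining piece over $[\varepsilon,y]$ is finite by (\ref{smoothness}). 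So the detour through ruling out $\mathfrak{s}(0^+)>-\infty$ is both incorrectly justified and unnecessary; deleting it and replacing ``$\ge 1$ on some $(0,\varepsilon)$'' with the inequality above repairs the argument.
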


\begin{proof}
Since $\psi_{\mu}(y)$ is increasing and $\psi_{\mu}(y) \ge 0$ we have
\begin{align*}
\int_{0}^{y}\psi_\mu(x)\psi_{\lambda}(x)\mathfrak{m}(x)dx\le \sup_{0 \le x \le y}\psi_{\mu}(x)\int_{0}^{y}\psi_{\lambda}(x)\mathfrak{m}(x)dx<\infty,
\end{align*}
where the integral $\int_{0}^{y}\psi_{\lambda}(x)\mathfrak{m}(x)dx$ is seen to be finite from Table 1 on page 130 of \cite{ItoMckean}.

\end{proof}

\begin{proof}[Proof of Lemma \ref{EigenRel}]
By Lemma \ref{LemIntegrable} the integral in (\ref{EigenRelDisplay}) is finite. Now, apply the differential operator $\mathsf{D}_{\mathfrak{m}^{\psi_{\lambda_2}}}=\mathsf{a}(c)\mathcal{D}_{\widehat{\mathfrak{s}^{\psi_{\lambda_2}}}}$ to both sides of (\ref{EigenRelDisplay}). The left hand side is clearly given by $\frac{\psi_{\lambda_1}(y)}{\psi_{\lambda_2}(y)}$ while the right hand side is, using (\ref{GenRep}) and (\ref{EigenfunctionDef}),
\begin{align*}
(\lambda_1-\lambda_2)^{-1}\mathcal{D}_{\widehat{\mathfrak{s}^{\psi_{\lambda_2}}}}\mathsf{D}_{\widehat{\mathfrak{m}^{\psi_{\lambda_2}}}}\left(\frac{\psi_{\lambda_1}}{\psi_{\lambda_2}}\right)(y)&=(\lambda_1-\lambda_2)^{-1}\mathsf{D}_{\mathfrak{m}^{\psi_{\lambda_2}}}\mathcal{D}_{\mathfrak{s}^{\psi_{\lambda_2}}}\left(\frac{\psi_{\lambda_1}}{\psi_{\lambda_2}}\right)(y)\\
&=(\lambda_1-\lambda_2)^{-1}\mathsf{L}^{\psi_{\lambda_2}}\left(\frac{\psi_{\lambda_1}}{\psi_{\lambda_2}}\right)(y)
=\frac{\psi_{\lambda_1}(y)}{\psi_{\lambda_2}(y)}.
\end{align*}
Thus it suffices to check that they are equal at $y=0$. The left hand side is clearly $0$ while for the right hand side we can compute, using (\ref{GenRep}),
\begin{align*}
\mathsf{D}_{\widehat{\mathfrak{m}^{\psi_{\lambda_2}}}}\left(\frac{\psi_{\lambda_1}}{\psi_{\lambda_2}}\right)(y)=\frac{1}{\mathsf{a}(c)}\mathcal{D}_{\mathfrak{s}^{\psi_{\lambda_2}}}\left(\frac{\psi_{\lambda_1}}{\psi_{\lambda_2}}\right)(y)&=\frac{1}{\mathsf{a}(c)\mathfrak{s}^{\psi_{\lambda_2}}(y)}\frac{\psi_{\lambda_1}'(y)\psi_{\lambda_2}(y)-\psi_{\lambda_2}'(y)\psi_{\lambda_1}(y)}{\psi^2_{\lambda_2}(y)}\\
&=\frac{\psi_{\lambda}^2(c)}{\mathsf{a}(c)}\left[\psi_{\lambda_2}(y)\left(\mathcal{D}_{\mathfrak{s}}\psi_{\lambda_1}\right)(y)-\psi_{\lambda_1}(y)\left(\mathcal{D}_{\mathfrak{s}}\psi_{\lambda_2}\right)(y)\right].
\end{align*}
The claim now follows since $\lim_{y\to 0}\left(\mathcal{D}_{\mathfrak{s}}\psi_{\lambda}\right)(y)=0$, because $0$ is an entrance boundary point, see for example Table 1 in Section 4.6 page 130 in \cite{ItoMckean}.
\end{proof}

Before we continue to prove Theorem \ref{InteractingDiffusionsThm}, we introduce a lemma which we need for the proof. 

\begin{lem}\label{LemBoundaryBehaviour}
Let $\mathsf{L}$ be a one-dimensional diffusion process generator in $(0,\infty)$ so that $0$ is an entrance boundary point while $\infty$ is a natural boundary point and with positive increasing eigenfunctions $\psi_\lambda$ as in (\ref{EigenfunctionDef}). Moreover, assume that $\mathsf{L}^{\psi_{\lambda}}$ satisfies the (\textbf{YW}) condition. Then, $0$ is an entrance and $\infty$ is a natural boundary point for the $\mathsf{L}^{\psi_{\lambda}}$-diffusion as well.
\end{lem}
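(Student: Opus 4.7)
The plan is to apply Feller's integral test for boundary classification (see e.g.\ \cite{ItoMckean,BorodinSalminen}). In the convention where the left boundary $l$ is entrance precisely when $I_1^l := \int_l^c \mathfrak{s}'(x) \int_x^c \mathfrak{m}(y)\,dy\,dx$ diverges while $I_2^l := \int_l^c \mathfrak{m}(x) \int_x^c \mathfrak{s}'(y)\,dy\,dx$ is finite, and the right boundary $r$ is natural when both analogous integrals $I_1^r$, $I_2^r$ diverge, the strategy is to compare the Feller integrals for $\mathsf{L}^{\psi_\lambda}$ with those for $\mathsf{L}$ using the monotonicity of $\psi_\lambda$. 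For this, I would first record the Doob-transform relations
\begin{align*}
(\mathfrak{s}^{\psi_\lambda})'(x) = \frac{\psi_\lambda^2(c)}{\psi_\lambda^2(x)}\,\mathfrak{s}'(x), \qquad \mathfrak{m}^{\psi_\lambda}(x) = \frac{\psi_\lambda^2(x)}{\psi_\lambda^2(c)}\,\mathfrak{m}(x),
\end{align*}
which are an immediate consequence of the explicit form of $\mathsf{L}^{\psi_\lambda}$ and the defining formula (\ref{speedmeasure}); note that the overall factors $\psi_\lambda^{\pm 2}(c)$ cancel inside both $I_1^{\psi_\lambda}$ and $I_2^{\psi_\lambda}$.

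At the left boundary $0$: since $\psi_\lambda$ is increasing, one has $\psi_\lambda^2(y) \ge \psi_\lambda^2(x)$ for $y \ge x$, so pulling $\psi_\lambda^2(x)$ out of the inner integral gives $I_1^{0,\psi_\lambda} \ge I_1^0 = \infty$. The same monotonicity, in the form $\psi_\lambda^{-2}(y) \le \psi_\lambda^{-2}(x)$ for $y \ge x$, yields $I_2^{0,\psi_\lambda} \le I_2^0 < \infty$, the last bound coming from the hypothesis that $0$ is entrance for $\mathsf{L}$. Hence $0$ would be entrance for $\mathsf{L}^{\psi_\lambda}$. At the right boundary $\infty$, the symmetric bound $\psi_\lambda^{-2}(y) \ge \psi_\lambda^{-2}(x)$ for $y \le x$ gives, in exactly the same way, $I_2^{\infty,\psi_\lambda} \ge I_2^\infty = \infty$.

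The main obstacle is the remaining Feller integral $I_1^{\infty,\psi_\lambda}$. Near $\infty$ the factor $\psi_\lambda^2(x)$ in $\mathfrak{m}^{\psi_\lambda}$ is large while $\psi_\lambda^{-2}(x)$ in $(\mathfrak{s}^{\psi_\lambda})'$ is small, so the monotonicity inequalities now point in the wrong direction and a pure scale/speed comparison is inconclusive; a priori the $h$-transform could make $\infty$ attainable. This is exactly where the hypothesis that $\mathsf{L}^{\psi_\lambda}$ satisfies (\textbf{YW}) enters: Yamada--Watanabe \cite{IkedaWatanabe,RevuzYor} applied to the SDE with coefficients $\sqrt{\mathsf{a}}$ and $\mathsf{b} + 2\mathsf{a}\,\psi_\lambda'/\psi_\lambda$ produces a unique, non-exploding strong solution, and non-explosion is equivalent, via Feller's test, to $I_1^{\infty,\psi_\lambda} = \infty$. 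Combining this with $I_2^{\infty,\psi_\lambda} = \infty$ gives that $\infty$ is natural for $\mathsf{L}^{\psi_\lambda}$, completing the plan.
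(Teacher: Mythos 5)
Your proposal is correct but follows a genuinely different route from the paper. The paper's own proof is purely probabilistic and much shorter: it invokes the (\textbf{YW}) condition once to rule out explosion at $\infty$, and then uses the one-dimensional comparison theorem of \cite{IkedaWatanabe} — keyed on the drift inequality $\mathsf{b}(x)\le \mathsf{b}(x)+2\mathsf{a}(x)\psi_\lambda'(x)/\psi_\lambda(x)$ (which holds because $\psi_\lambda$ is increasing) — to couple $\mathsf{x}^\lambda\ge\mathsf{x}$ pathwise; the boundary classification for $\mathsf{L}^{\psi_\lambda}$ is then read off by comparison with the known boundary behaviour of the $\mathsf{L}$-diffusion (cannot hit $0$, enters from $0$, cannot come in from $\infty$). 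You instead argue analytically through Feller's integral test, exploiting the Doob-transform identities for $\mathfrak{s}'$ and $\mathfrak{m}$ together with the monotonicity of $\psi_\lambda$ to compare three of the four Feller integrals to those of $\mathsf{L}$ directly, and reserve (\textbf{YW}) for the single integral $I_1^{\infty,\psi_\lambda}$ where the monotone comparison points the wrong way. Both arguments are valid, and both invoke (\textbf{YW}) for non-explosion at $\infty$ (the comparison theorem itself also needs a Yamada--Watanabe-type regularity hypothesis). Your version is slightly longer but has the pedagogical advantage of making precise which part of the conclusion is a consequence of soft monotonicity alone and which part genuinely requires the extra hypothesis on $\mathsf{L}^{\psi_\lambda}$; the paper's coupling argument is the more efficient one-liner.
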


\begin{proof}
First note that by the (\textbf{YW}) condition the SDE for $\mathsf{L}^{\psi_\lambda}$-diffusion is non-exploding and thus $\infty$ is inaccessible. Moreover, since we clearly have
\begin{align*}
 \mathsf{b}(x)\le \mathsf{b}(x)+2\mathsf{a}(x)\frac{\psi'_\lambda(x)}{\psi_\lambda(x)},
\end{align*}
the standard comparison theorem for one-dimensional SDEs, see \cite{IkedaWatanabe}, gives us a coupling so that the $\mathsf{L}^{\psi_\lambda}$-diffusion $\mathsf{x}^{\lambda}$ and the $\mathsf{L}$-diffusion $\mathsf{x}$ are ordered: $\mathsf{x}^\lambda\ge \mathsf{x}$ almost surely. Thus, by comparing with the $\mathsf{L}$-diffusion we see that the $\mathsf{L}^{\psi_\lambda}$-diffusion cannot reach $0$ in finite time if started in $(0,\infty)$ while it reaches $(0,\infty)$ in finite time if started from $0$ and finally cannot reach $(0,\infty)$ if started from $\infty$. This gives the conclusion.
\end{proof}

Consider the following positive kernels $\Lambda_{2n,2n-1}$ and $\Lambda_{2n+1,2n}$ from $\mathbb{W}_n$ to $\mathbb{W}_n$ and from $\mathbb{W}_{n+1}$ to $\mathbb{W}_n$ respectively:
\begin{align}
 \Lambda_{2n,2n-1}\left(y,dx\right)&=\prod_{i=1}^n\mathfrak{m}^{\psi_{\lambda_{n+1}}}(x_i)\mathbf{1}_{\left(x\prec y\right)}dx=\frac{1}{\psi^{2n}_{\lambda_{n+1}}(c)}\prod_{i=1}^n\psi_{\lambda_{n+1}}^2(x_{i})\mathfrak{m}(x_i)\mathbf{1}_{\left(x\prec y\right)}dx,\label{PreMarkovKernel1}\\
  \Lambda_{2n+1,2n}\left(y,dx\right)&=\prod_{i=1}^n\widehat{\mathfrak{m}^{\psi_{\lambda_{n+1}}}}(x_i)\mathbf{1}_{\left(x\prec y\right)}dx=\psi^{2n}_{\lambda_{n+1}}(c)\prod_{i=1}^n\psi_{\lambda_{n+1}}^{-2}(x_{i})\hat{\mathfrak{m}}(x_i)\mathbf{1}_{\left(x\prec y\right)}dx.\label{PreMarkovKernel2}
\end{align}
We will transform them shortly so that they become Markov kernels. Towards this end define the following functions on $\overline{\mathbb{W}}_n$ (these clearly depend on the parameters $\lambda_i$ but we suppress it in the notation)
\begin{align}
 \mathsf{\Psi}^{(n)}(x)&=\frac{\det\left(\psi_{\lambda_i}(x_j)\right)_{i,j=1}^n}{\prod_{i=1}^n\psi_{\lambda_{n+1}}(x_i)}, \ \ \overline{\mathsf{\Psi}}^{(n)}(x)=\prod_{i=1}^n\frac{\psi_{\lambda_n}(x_i)}{\psi_{\lambda_{n+1}}(x_i)}\mathsf{\Psi}^{(n)}(x)=\frac{\det\left(\psi_{\lambda_i}(x_j)\right)_{i,j=1}^n}{\prod_{i=1}^n\psi_{\lambda_{n}}(x_i)}, \label{FunctionDisplay1}\\
 \widetilde{\mathsf{\Psi}}^{(n)}(x)&=(-1)^n\det\left(\mathsf{D}_{\widehat{\mathfrak{m}^{\psi_{\lambda_{n+1}}}}}\left(\frac{\psi_{\lambda_i}}{\psi_{\lambda_{n+1}}}\right)(x_j)\right)_{i,j=1}^n.\label{FunctionDisplay2}
\end{align}

With all these notations in place we can rewrite Lemma \ref{IntProp1} and Lemma \ref{IntProp2} as follows:
\begin{prop}\label{PropIntegration}
We have 
\begin{align}
 \left[\Lambda_{2n,2n-1}\mathsf{\Psi}^{(n)}\right](x)&=\mathsf{c}_n \widetilde{\mathsf{\Psi}}^{(n)}(x), \ \ \forall x\in \overline{\mathbb{W}}_n,\label{IntRel1}\\
 \left[\Lambda_{2n+1,2n}\widetilde{\mathsf{\Psi}}^{(n)}\right](x)&=\overline{\mathsf{\Psi}}^{(n+1)}(x),\ \ \forall x\in \overline{\mathbb{W}}_{n+1},\label{IntRel2}
\end{align}
where $\mathsf{c}_n$ is given by (\ref{constant}).
\end{prop}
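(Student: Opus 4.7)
The plan is extremely short because Proposition \ref{PropIntegration} is a direct translation of the two preceding lemmas into the compact kernel/function notation set up in (\ref{PreMarkovKernel1})--(\ref{FunctionDisplay2}); no new analytic input is needed.

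First I would prove (\ref{IntRel1}). By pulling the factors $\psi_{\lambda_{n+1}}(x_j)^{-1}$ into the determinant via multilinearity, the function in (\ref{FunctionDisplay1}) can be rewritten as
\begin{align*}
\mathsf{\Psi}^{(n)}(x)=\det\left(\frac{\psi_{\lambda_i}(x_j)}{\psi_{\lambda_{n+1}}(x_j)}\right)_{i,j=1}^{n}.
\end{align*}
Inserting this into $[\Lambda_{2n,2n-1}\mathsf{\Psi}^{(n)}](y)$, using the definition of $\Lambda_{2n,2n-1}$ in (\ref{PreMarkovKernel1}), gives precisely the left-hand side of (\ref{Integral2}). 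Lemma \ref{IntProp2} then identifies it with $\mathsf{c}_n\widetilde{\mathsf{\Psi}}^{(n)}(y)$, which is (\ref{IntRel1}).

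Next I would prove (\ref{IntRel2}). By the very definition (\ref{FunctionDisplay2}) of $\widetilde{\mathsf{\Psi}}^{(n)}$, the quantity $[\Lambda_{2n+1,2n}\widetilde{\mathsf{\Psi}}^{(n)}](y)$ coincides with the left-hand side of (\ref{Integral1}). Lemma \ref{IntProp1} then evaluates it to $\det\bigl((\psi_{\lambda_i}/\psi_{\lambda_{n+1}})(y_j)\bigr)_{i,j=1}^{n+1}$. A second application of multilinearity in reverse, pulling the common denominators out column-by-column, yields
\begin{align*}
\det\left(\frac{\psi_{\lambda_i}(y_j)}{\psi_{\lambda_{n+1}}(y_j)}\right)_{i,j=1}^{n+1}=\frac{\det\left(\psi_{\lambda_i}(y_j)\right)_{i,j=1}^{n+1}}{\prod_{i=1}^{n+1}\psi_{\lambda_{n+1}}(y_i)}=\overline{\mathsf{\Psi}}^{(n+1)}(y),
\end{align*}
where the last equality is just the definition of $\overline{\mathsf{\Psi}}^{(n+1)}$ in (\ref{FunctionDisplay1}) with $n$ replaced by $n+1$ (so the ``$\lambda_n$'' appearing in the denominator of the definition of $\overline{\mathsf{\Psi}}^{(n)}$ becomes $\lambda_{n+1}$). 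This is (\ref{IntRel2}).

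There is essentially no obstacle: all analytic content, in particular the computation of the integrals under the interlacing constraint and the careful use of the identity in Lemma \ref{EigenRel} to bring the integrals into the determinantal form, has already been absorbed into Lemmas \ref{IntProp1} and \ref{IntProp2}. The sole purpose of Proposition \ref{PropIntegration} is to repackage those identities in the kernel language so that the subsequent inductive construction of the interacting dynamics can invoke them conveniently as intertwining relations at the level of functions on $\overline{\mathbb{W}}_n$.
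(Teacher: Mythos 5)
Your proof is correct and matches the paper's intended reading exactly: the paper introduces Proposition \ref{PropIntegration} with the phrase ``With all these notations in place we can rewrite Lemma \ref{IntProp1} and Lemma \ref{IntProp2} as follows'' and offers no separate proof, since it is precisely the mechanical translation you carry out (unwinding the definitions of $\Lambda_{2n,2n-1}$, $\Lambda_{2n+1,2n}$, $\mathsf{\Psi}^{(n)}$, $\widetilde{\mathsf{\Psi}}^{(n)}$, $\overline{\mathsf{\Psi}}^{(n+1)}$ via multilinearity of the determinant).
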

 Note that by inductively using Proposition \ref{PropIntegration}, since $\mathfrak{m}$ and $\hat{\mathfrak{m}}$ are strictly positive on $(l,r)$, when restricted to $\mathbb{W}_n$ all the functions $\mathsf{\Psi}^{(n)}, \overline{\mathsf{\Psi}}^{(n)},  \widetilde{\mathsf{\Psi}}^{(n)}$ are strictly positive. We can thus make the following definition.

\begin{defn}\label{MarkovKernelDef}
We define the following Markov kernels $\mathfrak{L}_{2n,2n-1}$ and $\mathfrak{L}_{2n+1,2n}$ from $\mathbb{W}_n$ to $\mathbb{W}_n$ and from $\mathbb{W}_{n+1}$ to $\mathbb{W}_n$ respectively:
\begin{align*}
 \mathfrak{L}_{2n,2n-1}(y,dx)&=\frac{1}{\mathsf{c}_n\widetilde{\mathsf{\Psi}}^{(n)}(y)}\Lambda_{2n,2n-1}(y,dx)\mathsf{\Psi}^{(n)}(x),\\
 \mathfrak{L}_{2n+1,2n}(y,dx)&=\frac{1}{\overline{\mathsf{\Psi}}^{(n+1)}(y)}\Lambda_{2n+1,2n}(y,dx)\widetilde{\mathsf{\Psi}}^{(n)}(x).
\end{align*}
\end{defn}
We also define the Karlin-McGregor semigroups, see \cite{KarlinMcGregor,Karlin,ItoMckean}, $\mathcal{P}_t^n, \overline{\mathcal{P}}_t^n, \hat{\mathcal{P}}_t^n$ associated to $n$ independent $\mathsf{L}^{\psi_{\lambda_{n+1}}}$,  $\mathsf{L}^{\psi_{\lambda_{n}}}$ and $\widehat{\mathsf{L}^{\psi_{\lambda_{n+1}}}}$ (killed when they hit $0$) diffusions respectively in $\mathbb{W}_n$ with transition densities with respect to the Lebesgue measure in $\mathbb{W}_n$ given by
\begin{align*}
\mathcal{P}_t^n\left(x,y\right)&=\det\left(p_t^{\psi_{\lambda_{n+1}}}(x_i,y_j)\right)_{i,j=1}^n,\\
\overline{\mathcal{P}}_t^n\left(x,y\right)&=\det\left(p_t^{\psi_{\lambda_{n}}}(x_i,y_j)\right)_{i,j=1}^n,\\
\hat{\mathcal{P}}_t^n\left(x,y\right)&=\det\left(\widehat{p_t^{\psi_{\lambda_{n+1}}}}(x_i,y_j)\right)_{i,j=1}^n.
\end{align*}
From equation (26) in \cite{InterlacingDiffusions} we get the following intertwining relations
\begin{align}
\hat{\mathcal{P}}_t^n\Lambda_{2n,2n-1}&=\Lambda_{2n,2n-1} \mathcal{P}_t^n,\label{Inter1}\\
\overline{\mathcal{P}}_t^{n+1}\Lambda_{2n+1,2n}&=\Lambda_{2n+1,2n} \hat{\mathcal{P}}_t^n.\label{Inter2}
\end{align}
For (\ref{Inter1}) we have picked the $L$-diffusion therein to be our $\widehat{\mathsf{L}^{\psi_{\lambda_{n+1}}}}$-diffusion, with $(n_1,n_2)=(N,N)$ while for (\ref{Inter2}) we have picked the $L$-diffusion therein to be our $\mathsf{L}^{\psi_{\lambda_{n+1}}}$-diffusion, with $(n_1,n_2)=(N,N+1)$. The regularity condition there is satisfied because of (\ref{smoothness}) while the boundary condition there is satisfied because $0$ is entrance and $\infty$ is natural for our $\mathsf{L}$-diffusion and by virtue of Lemma \ref{LemBoundaryBehaviour} they are so for the $\mathsf{L}^{\psi_{\lambda_{n+1}}}$-diffusion as well.

Now, observe that $\mathsf{\Psi}^{(1)}$ is an eigenfunction of $\mathcal{P}_t^1$ with eigenvalue $e^{t(\lambda_1-\lambda_2)}$. The corresponding Doob $h$-transformation transforms a $\mathsf{L}^{\psi_{\lambda_2}}$-diffusion to a $\mathsf{L}^{\psi_{\lambda_1}}$-diffusion. Then, by induction using the intertwining relations above and (\ref{IntRel1}) and (\ref{IntRel2}) we get that $\mathsf{\Psi}^{(n)}$, $\overline{\mathsf{\Psi}}^{(n+1)}$ and $\widetilde{\mathsf{\Psi}}^{(n)}$ are strictly positive eigenfunctions, with eigenvalue $e^{t\sum_{i=1}^n(\lambda_i-\lambda_{n+1})}$, of $\mathcal{P}_t^n$, $\overline{\mathcal{P}}_t^{n+1}$ and $\hat{\mathcal{P}}_t^n$ respectively. Thus, we can correctly define the Doob $h$-transformed Karlin-McGregor semigroups (which are now bona fide Markov semigroups compared to sub-Markov) giving rise to non-intersecting paths in $\mathbb{W}_n$ with transition kernels given by
\begin{align}
\mathcal{P}_t^{n,\mathsf{\Psi}^{(n)}}\left(x,dy\right)&=e^{-\sum_{i=1}^n(\lambda_i-\lambda_{n+1})t}\frac{\mathsf{\Psi}^{(n)}(y)}{\mathsf{\Psi}^{(n)}(x)}\det\left(p_t^{\psi_{\lambda_{n+1}}}(x_i,y_j)\right)_{i,j=1}^ndy_1\cdots dy_n,\label{DoobKMsemi1}\\
\overline{\mathcal{P}}_t^{n,\overline{\mathsf{\Psi}}^{(n)}}\left(x,dy\right)&=e^{-\sum_{i=1}^{n-1}(\lambda_i-\lambda_{n})t}\frac{\overline{\mathsf{\Psi}}^{(n)}(y)}{\overline{\mathsf{\Psi}}^{(n)}(x)}\det\left(p_t^{\psi_{\lambda_{n}}}(x_i,y_j)\right)_{i,j=1}^ndy_1\cdots dy_n,\label{DoobKMsemi2}\\
\hat{\mathcal{P}}_t^{n,\widetilde{\mathsf{\Psi}}^{(n)}}\left(x,dy\right)&=e^{-\sum_{i=1}^n(\lambda_i-\lambda_{n+1})t}\frac{\widetilde{\mathsf{\Psi}}^{(n)}(y)}{\widetilde{\mathsf{\Psi}}^{(n)}(x)}\det\left(\widehat{p_t^{\psi_{\lambda_{n+1}}}}(x_i,y_j)\right)_{i,j=1}^ndy_1\cdots dy_n.\label{DoobKMsemi3}
\end{align}
\begin{rmk}
A simple computation, see for example Lemma 2 in \cite{RogersPitman}, using (\ref{Inter1}) and (\ref{Inter2}) gives the following intertwinings (that we record here although we will not make direct use of)
\begin{align*}
\hat{\mathcal{P}}_t^{n,\widetilde{\mathsf{\Psi}}^{(n)}}\mathfrak{L}_{2n,2n-1}&=\mathfrak{L}_{2n,2n-1} \mathcal{P}_t^{n,\mathsf{\Psi}^{(n)}},\\
\overline{\mathcal{P}}_t^{n+1,\overline{\mathsf{\Psi}}^{(n+1)}}\mathfrak{L}_{2n+1,2n}&=\mathfrak{L}_{2n+1,2n} \hat{\mathcal{P}}_t^{n,\widetilde{\mathsf{\Psi}}^{(n)}}.
\end{align*}
\end{rmk}

Finally, the following simple observation will be key for the constructions in Propositions \ref{TwoLevel1} and \ref{TwoLevel2}.

\begin{lem}\label{SemigroupObs}
With the notations above we have the equality of semigroups
\begin{align*}
 \mathcal{P}_t^{n,\mathsf{\Psi}^{(n)}}=\mathfrak{P}_t^{(n),(\lambda_1,\dots,\lambda_n)}  \ \ \textnormal{ and } \ \  \overline{\mathcal{P}}_t^{n+1,\overline{\mathsf{\Psi}}^{(n+1)}}=\mathfrak{P}_t^{(n+1),(\lambda_1,\dots,\lambda_{n+1})}, \ \ \forall t\ge 0,   
\end{align*}
where recall that $\mathfrak{P}_t^{(n),(\lambda_1,\dots,\lambda_n)}$ was defined in (\ref{ConditionedSemigroup}).
\end{lem}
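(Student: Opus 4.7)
The plan is to simply unwind the definitions and check that everything matches. The key observation is that in both Doob $h$-transformed Karlin--McGregor semigroups (\ref{DoobKMsemi1}) and (\ref{DoobKMsemi2}) the single-particle transition density that appears under the determinant is already a Doob $h$-transform of the underlying $\mathsf{L}$-density by the ``conjugating'' eigenfunction ($\psi_{\lambda_{n+1}}$ in the first case, $\psi_{\lambda_n}$ in the second), and in each case this same eigenfunction appears in the denominator of $\mathsf{\Psi}^{(n)}$ (respectively $\overline{\mathsf{\Psi}}^{(n)}$). So the construction is set up precisely so that these two contributions cancel, leaving the desired expression (\ref{ConditionedSemigroup}).

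I would proceed as follows for the first identity. Starting from (\ref{DoobKMsemi1}) and using $p_t^{\psi_{\lambda_{n+1}}}(x,y)=e^{-\lambda_{n+1}t}\psi_{\lambda_{n+1}}(y)\psi_{\lambda_{n+1}}(x)^{-1}p_t(x,y)$, multilinearity of the determinant pulls out $e^{-n\lambda_{n+1}t}\prod_{i}\psi_{\lambda_{n+1}}(y_i)\psi_{\lambda_{n+1}}(x_i)^{-1}$, leaving $\det(p_t(x_i,y_j))$ inside. Then substituting the explicit formula $\mathsf{\Psi}^{(n)}(z)=\det(\psi_{\lambda_i}(z_j))/\prod_i \psi_{\lambda_{n+1}}(z_i)$ from (\ref{FunctionDisplay1}), the factors $\prod_i \psi_{\lambda_{n+1}}(y_i)$ and $\prod_i \psi_{\lambda_{n+1}}(x_i)$ cancel exactly against those coming from $\mathsf{\Psi}^{(n)}(y)/\mathsf{\Psi}^{(n)}(x)$. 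The exponential simplifies as $-\sum_{i=1}^n(\lambda_i-\lambda_{n+1})t-n\lambda_{n+1}t=-\sum_{i=1}^n\lambda_i t$, which matches the prefactor in (\ref{ConditionedSemigroup}) with $N=n$ and parameters $(\lambda_1,\dots,\lambda_n)$.

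The second identity is proved by an identical calculation, the only difference being that the index $n$ is replaced by $n+1$ throughout (so the determinants are of size $(n+1)\times(n+1)$) and the role of $\psi_{\lambda_{n+1}}$ is played by the same function (since for $\overline{\mathsf{\Psi}}^{(n+1)}$ the denominator is $\prod_{i=1}^{n+1}\psi_{\lambda_{n+1}}(x_i)$, matching the conjugation in $\overline{\mathcal{P}}_t^{n+1}=\det(p_t^{\psi_{\lambda_{n+1}}}(x_i,y_j))_{i,j=1}^{n+1}$). The exponential bookkeeping gives $-\sum_{i=1}^n(\lambda_i-\lambda_{n+1})t-(n+1)\lambda_{n+1}t=-\sum_{i=1}^{n+1}\lambda_i t$, again matching (\ref{ConditionedSemigroup}) with parameters $(\lambda_1,\dots,\lambda_{n+1})$.

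There is no real obstacle here: the lemma is essentially a bookkeeping statement asserting that the two descriptions of the conditioned process (as a Doob $h$-transform of the Karlin--McGregor semigroup using $\mathsf{\Psi}^{(n)}$ or $\overline{\mathsf{\Psi}}^{(n+1)}$, versus the intrinsic conditioning defined in Section \ref{SectionConditioned}) yield the same transition kernel. The only thing to verify carefully is the cancellation of the $\psi_{\lambda_{n+1}}$ factors and the arithmetic of the exponential prefactors, both of which are routine applications of multilinearity of the determinant.
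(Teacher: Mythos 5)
Your proof is correct and matches the paper's own argument: both unwind (\ref{DoobKMsemi1})--(\ref{DoobKMsemi2}) using multilinearity of the determinant, observe the cancellation of the $\psi_{\lambda_{n+1}}$ factors against the denominators in $\mathsf{\Psi}^{(n)}$ and $\overline{\mathsf{\Psi}}^{(n+1)}$, and track the exponential prefactors to recover (\ref{ConditionedSemigroup}). The bookkeeping, including the exponent arithmetic, is accurate.
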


\begin{proof} For the first equality of semigroups we observe that using (\ref{FunctionDisplay1}), the transition density in (\ref{DoobKMsemi1}) is equal to
\begin{align*}
 e^{-\sum_{i=1}^n \lambda_i t}e^{n \lambda_{n+1}t}\frac{\det\left(\psi_{\lambda_i}(y_j)\right)_{i,j=1}^n}{\det\left(\psi_{\lambda_i}(x_j)\right)_{i,j=1}^n}\prod_{i=1}^n\frac{\psi_{\lambda_{n+1}}(x_i)}{\psi_{\lambda_{n+1}}(y_i)}\times e^{-n\lambda_{n+1}t}\prod_{i=1}^n\frac{\psi_{\lambda_{n+1}}(y_i)}{\psi_{\lambda_{n+1}}(x_i)}\det \left(p_t(x_i,y_j)\right)_{i,j=1}^n,
\end{align*}
which equals (\ref{ConditionedSemigroup}). The second equality follows analogously from (\ref{DoobKMsemi2}), (\ref{FunctionDisplay1}) and (\ref{ConditionedSemigroup}).
\end{proof}

In order to proceed we require some terminology that makes more precise the notion of a diffusion reflected off continuous (non-intersecting) paths, see Section 5.1 in \cite{InterlacingDiffusions}.

Let $Q=a(x)\frac{d}{dx}+b(x)\frac{d}{dx}$ be a diffusion process generator in $(0,\infty)$ with both boundary points $0$ and $\infty$ being inaccessible. Suppose that we are given a process $(\mathsf{X}(t);t\ge 0)=\left(\left(\mathsf{X}_1(t),\dots,\mathsf{X}_N(t)\right);t\ge 0\right)$ of continuous non-intersecting paths in $\mathbb{W}_N$. Then, by $\mathsf{Y}$ is a system of $N+1$ $Q$-diffusions reflected off $\mathsf{X}$ in $\tilde{\mathbb{W}}_{N,N+1}$ we mean continuous processes $\left(\left(\mathsf{Y}_1(t),\dots,\mathsf{Y}_{N+1}(t)\right);t\ge 0\right)$ satisfying $\mathsf{Y}_1(t)\le \mathsf{X}_1(t)\le \mathsf{Y}_2(t)\le \cdots \le \mathsf{X}_N(t)\le \mathsf{Y}_{N+1}(t)$ for all $t\ge 0$ and so that the following SDEs hold:
\begin{align*}
  d\mathsf{Y}_1(t) &=\sqrt{2a\left(\mathsf{Y}_1(t)\right)}d\mathsf{w}_1(t)+b\left(\mathsf{Y}_1(t)\right)dt-d\mathfrak{k}_1^-(t),\\
  &\vdots\\
  d\mathsf{Y}_j(t)&=\sqrt{2a\left(\mathsf{Y}_j(t)\right)}d\mathsf{w}_j(t)+b\left(\mathsf{Y}_j(t)\right)dt+d\mathfrak{k}_j^+(t)-d\mathfrak{k}_j^-(t),\\
  &\vdots \\
 d\mathsf{Y}_{N+1}(t)&=\sqrt{2a\left(\mathsf{Y}_{N+1}(t)\right)}d\mathsf{w}_{N+1}(t)+b\left(\mathsf{Y}_{N+1}(t)\right)dt+d\mathfrak{k}_{N+1}^+(t),
\end{align*}
where the positive finite variation processes $\mathfrak{k}_j^+, \mathfrak{k}_j^-$ ($\mathfrak{k}_1^+$ and $\mathfrak{k}_{N+1}^-$ are identically zero) are such that $\mathfrak{k}_j^-$ increases only when $\mathsf{X}_j=\mathsf{Y}_j$ and $\mathfrak{k}_j^+$ increases only when $\mathsf{X}_{j-1}=\mathsf{Y}_j$ in order for $\left(\mathsf{X}(t),\mathsf{Y}(t)\right) \in \tilde{\mathbb{W}}_{N,N+1}$ forever. Here, $\mathsf{w}_1,\dots,\mathsf{w}_{N+1}$ are independent standard Brownian motions which are moreover independent of $\mathsf{X}$. Under (\textbf{YW}) the SDEs above have a unique strong solution in $\tilde{\mathbb{W}}_{N,N+1}$, see Section 5.1 in \cite{InterlacingDiffusions}. 

We also need the corresponding definition in $\tilde{\mathbb{W}}_{N,N}$. Let $Q=a(x)\frac{d}{dx}+b(x)\frac{d}{dx}$ be a one-dimensional diffusion process generator in $(0,\infty)$ with $\infty$ being inaccessible\footnote{We do not need that $0$ is inaccessible here since the smallest $\mathsf{Y}$ coordinate, namely $\mathsf{Y}_1$, is being kept away from $0$ by the continuous barrier $\mathsf{X}_1$ which by assumption stays in $(0,\infty)$.}. Suppose that we are given a process $(\mathsf{X}(t);t\ge 0)=\left(\left(\mathsf{X}_1(t),\dots,\mathsf{X}_N(t)\right);t\ge 0\right)$ of continuous non-intersecting paths in $\mathbb{W}_N$. Then, by $\mathsf{Y}$ is a system of $N$ $Q$-diffusions reflected off $\mathsf{X}$ in $\tilde{\mathbb{W}}_{N,N}$ we mean continuous processes $\left(\left(\mathsf{Y}_1(t),\dots,\mathsf{Y}_{N}(t)\right);t\ge 0\right)$ satisfying $\mathsf{X}_1(t)\le \mathsf{Y}_1(t)\le \mathsf{X}_2(t)\le \cdots \le \mathsf{X}_N(t)\le \mathsf{Y}_{N}(t)$ for all $t\ge 0$ and so that the following SDEs hold:
\begin{align*}
  d\mathsf{Y}_1(t) &=\sqrt{2a\left(\mathsf{Y}_1(t)\right)}d\mathsf{w}_1(t)+b\left(\mathsf{Y}_1(t)\right)dt+d\mathfrak{k}_1^+(t)-d\mathfrak{k}_1^-(t),\\
  &\vdots\\
  d\mathsf{Y}_j(t)&=\sqrt{2a\left(\mathsf{Y}_j(t)\right)}d\mathsf{w}_j(t)+b\left(\mathsf{Y}_j(t)\right)dt+d\mathfrak{k}_j^+(t)-d\mathfrak{k}_j^-(t),\\
  &\vdots \\
 d\mathsf{Y}_{N}(t)&=\sqrt{2a\left(\mathsf{Y}_{N}(t)\right)}d\mathsf{w}_{N}(t)+b\left(\mathsf{Y}_{N}(t)\right)dt+d\mathfrak{k}_{N}^+(t),
\end{align*}
where the positive finite variation processes $\mathfrak{k}_j^+, \mathfrak{k}_j^-$ ($\mathfrak{k}_N^-$ is identically zero) are such that $\mathfrak{k}_j^-$ increases only when $\mathsf{X}_{j+1}=\mathsf{Y}_j$ and $\mathfrak{k}_j^+$ increases only when $\mathsf{X}_{j}=\mathsf{Y}_j$ in order for $\left(\mathsf{X}(t),\mathsf{Y}(t)\right) \in \tilde{\mathbb{W}}_{N,N}$ forever. Here, $\mathsf{w}_1,\dots,\mathsf{w}_{N}$ are independent standard Brownian motions which are moreover independent of $\mathsf{X}$. Under (\textbf{YW}) the SDEs above have a unique strong solution in $\tilde{\mathbb{W}}_{N,N}$, see Section 5.1 in \cite{InterlacingDiffusions}.

With all these preliminaries in place we can state the following two results which are the basic building blocks of the construction in the interlacing array.

\begin{prop}\label{TwoLevel1} Let $\mathsf{L}$ be a one-dimensional diffusion process generator satisfying (\ref{smoothness}) in $(0,\infty)$ so that $0$ is an entrance boundary point while $\infty$ is a natural boundary point and with positive increasing eigenfunctions $\psi_\lambda$ as in (\ref{EigenfunctionDef}). Assume that $\mathsf{L}^{\psi_{\lambda_{N+1}}}$ and $\widehat{\mathsf{L}^{\psi_{\lambda_{N+1}}}}$ satisfy the (\textbf{YW}) condition.

Consider a two-level process $\left(\left(\mathsf{X}(t),\mathsf{Y}(t)\right);t\ge 0\right)$ in $\tilde{\mathbb{W}}_{N,N+1}$ with the $\mathsf{X}$-process evolving as $N$ non-intersecting paths with transition semigroup $\hat{\mathcal{P}}_t^{N,\widetilde{\mathsf{\Psi}}^{(N)}}$ and $\mathsf{Y}$ as a system of $N+1$ $\mathsf{L}^{\psi_{\lambda_{N+1}}}$-diffusions reflected off $\mathsf{X}$ in $\tilde{\mathbb{W}}_{N,N+1}$. 

Let $\mathcal{M}(dy)$ be a probability measure on $\mathbb{W}_{N+1}$ and assume that $\left(\left(\mathsf{X}(t),\mathsf{Y}(t)\right);t\ge 0\right)$ is initialized according to $\mathcal{M}(dy)\mathfrak{L}_{2N+1,2N}(y,dx)$ in $\mathbb{W}_{N,N+1}$. Then, $\left(\mathsf{Y}(t);t\ge 0\right)$ is distributed as a diffusion process with transition semigroup $\mathfrak{P}_t^{(N+1),(\lambda_1,\dots,\lambda_{N+1})}$ starting from $\mathcal{M}$. In particular, it consists of non-intersecting paths, namely for all $t\ge 0$: $\mathsf{Y}(t)\in \mathbb{W}_{N+1}$ and so $\left(\mathsf{X}(t),\mathsf{Y}(t)\right)\in \mathbb{W}_{N,N+1}$. Moreover, for any $T\ge 0$ the distribution of $\left(\mathsf{X}(T),\mathsf{Y}(T)\right)$  in $\mathbb{W}_{N,N+1}$ is given by $\left[\mathcal{M}\mathfrak{P}_T^{(N+1),(\lambda_1,\dots,\lambda_{N+1})}\right](dy)\mathfrak{L}_{2N+1,2N}\left(y,dx\right)$.
\end{prop}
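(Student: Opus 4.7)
The overall plan is to apply a general two-level coupling theorem for reflected diffusions from \cite{InterlacingDiffusions}, whose hypothesis is an intertwining relation between the autonomous semigroup of the driving lower level $\mathsf{X}$ and the candidate semigroup of the reflected upper level $\mathsf{Y}$, coupled through a Markov kernel; the conclusion is precisely the Markovianity of $\mathsf{Y}$ together with the factorised form of the joint law at every fixed time.

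Step 1 (Doob-transformed intertwining). I would first establish
\begin{equation*}
\overline{\mathcal{P}}_t^{N+1,\overline{\mathsf{\Psi}}^{(N+1)}}\,\mathfrak{L}_{2N+1,2N}=\mathfrak{L}_{2N+1,2N}\,\hat{\mathcal{P}}_t^{N,\widetilde{\mathsf{\Psi}}^{(N)}},\quad t\ge 0.
\end{equation*}
This is a direct computation: unwinding Definition \ref{MarkovKernelDef} together with the Doob-transformed kernels (\ref{DoobKMsemi2})-(\ref{DoobKMsemi3}), the identity reduces to the bare Karlin--McGregor intertwining (\ref{Inter2}), the eigenfunction relation $\Lambda_{2N+1,2N}\widetilde{\mathsf{\Psi}}^{(N)}=\overline{\mathsf{\Psi}}^{(N+1)}$ from Proposition \ref{PropIntegration}, and cancellation of the normalisation factors $e^{-t\sum_{i}(\lambda_i-\lambda_{N+1})}$ appearing in the two Doob transforms.

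Step 2 (well-posedness and invocation of the abstract result). The reflected SDE system defining $\mathsf{Y}$ is well-posed in $\tilde{\mathbb{W}}_{N,N+1}$ by (\textbf{YW}) applied to $\mathsf{L}^{\psi_{\lambda_{N+1}}}$ (and to $\widehat{\mathsf{L}^{\psi_{\lambda_{N+1}}}}$ when one re-expresses the system in duality), see Section~5.1 of \cite{InterlacingDiffusions}; Lemma \ref{LemBoundaryBehaviour} guarantees that the underlying one-dimensional diffusion has $0$ entrance and $\infty$ natural, so that no boundary issues arise. Under the factorised initial law $\mathcal{M}(dy)\mathfrak{L}_{2N+1,2N}(y,dx)$, the abstract two-level coupling theorem of \cite{InterlacingDiffusions} (the reflected-diffusion analogue of the Rogers--Pitman intertwining criterion) applies and yields that $(\mathsf{Y}(t);t\ge 0)$ is Markov with semigroup $\overline{\mathcal{P}}_t^{N+1,\overline{\mathsf{\Psi}}^{(N+1)}}$ started from $\mathcal{M}$, and that the joint law at any $T\ge 0$ is $[\mathcal{M}\overline{\mathcal{P}}_T^{N+1,\overline{\mathsf{\Psi}}^{(N+1)}}](dy)\,\mathfrak{L}_{2N+1,2N}(y,dx)$.

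Step 3 (identification and non-intersection). By Lemma \ref{SemigroupObs} we have $\overline{\mathcal{P}}_t^{N+1,\overline{\mathsf{\Psi}}^{(N+1)}}=\mathfrak{P}_t^{(N+1),(\lambda_1,\dots,\lambda_{N+1})}$, giving the claimed semigroup for $\mathsf{Y}$ and the claimed form of the joint distribution at time $T$. For the non-intersection assertion, note that by construction (see (\ref{ConditionedSemigroup})) the semigroup $\mathfrak{P}_t^{(N+1),(\lambda_1,\dots,\lambda_{N+1})}$ is a genuine Markov kernel on $\mathbb{W}_{N+1}$; since $\mathcal{M}$ is supported on $\mathbb{W}_{N+1}$, we obtain $\mathsf{Y}(t)\in\mathbb{W}_{N+1}$ almost surely for every $t\ge 0$, hence $(\mathsf{X}(t),\mathsf{Y}(t))\in\mathbb{W}_{N,N+1}$.

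The main obstacle is Step 2: one must match our Doob $h$-transformed objects with the precise hypotheses of the abstract two-level result in \cite{InterlacingDiffusions}, and in particular verify that the reflection dynamics as written (with the $\mathsf{L}^{\psi_{\lambda_{N+1}}}$-drift on the $\mathsf{Y}$-particles and the autonomous $\hat{\mathcal{P}}_t^{N,\widetilde{\mathsf{\Psi}}^{(N)}}$-evolution on $\mathsf{X}$) is exactly the one coupled to the intertwining kernel $\mathfrak{L}_{2N+1,2N}$ through the eigenfunction $\overline{\mathsf{\Psi}}^{(N+1)}$; once this dictionary is set up, the argument is algebraic and essentially mechanical.
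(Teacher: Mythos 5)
Your proposal is correct and takes essentially the same route as the paper: the paper cites Theorem 2.19 of \cite{InterlacingDiffusions} directly (with the $L$-diffusion there taken to be $\mathsf{L}^{\psi_{\lambda_{N+1}}}$, the eigenfunction $\hat{h}_N$ taken to be $\widetilde{\mathsf{\Psi}}^{(N)}$, and boundary/regularity conditions checked via Lemma \ref{LemBoundaryBehaviour}), then concludes using (\ref{IntRel2}) and Lemma \ref{SemigroupObs}. The only difference is presentational: you carry out the passage to the Doob-transformed intertwining explicitly as a preliminary step, whereas the paper leaves that step packaged inside the cited two-level theorem (and indeed records those Doob-transformed intertwinings only in a remark, noting it will not use them directly).
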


\begin{proof}
We apply Theorem 2.19, see also Proposition 2.17 and Corollary 2.18, in \cite{InterlacingDiffusions} with the following correspondences. First, note that the role of the $X$ and $Y$ processes is reversed there compared to the current paper, namely $\mathsf{Y}=X$ and $\mathsf{X}=Y$ in our notation. Then, we pick the $L$-diffusion therein to be our $\mathsf{L}^{\psi_{\lambda_{N+1}}}$-diffusion. The regularity and boundary conditions therein are satisfied by our assumptions on the $\mathsf{L}$-diffusion above and by virtue of Lemma \ref{LemBoundaryBehaviour}. Finally, the strictly positive eigenfunction $\hat{h}_N$ for $N$ independent copies of $\hat{L}=\widehat{\mathsf{L}^{\psi_{\lambda_{N+1}}}}$ diffusions killed when they intersect (or hit $0$), namely associated to $\hat{\mathcal{P}}_t^N$, is given by our $\widetilde{\mathsf{\Psi}}^{(N)}$. The conclusion then follows by noting (\ref{IntRel2}) and Lemma \ref{SemigroupObs}.
\end{proof}

\begin{prop}\label{TwoLevel2}
Let $\mathsf{L}$ be a one-dimensional diffusion process generator satisfying (\ref{smoothness}) in $(0,\infty)$ so that $0$ is an entrance boundary point while $\infty$ is a natural boundary point and with positive increasing eigenfunctions $\psi_\lambda$ as in (\ref{EigenfunctionDef}). Assume that $\mathsf{L}^{\psi_{\lambda_{N+1}}}$ and $\widehat{\mathsf{L}^{\psi_{\lambda_{N+1}}}}$ satisfy the (\textbf{YW}) condition.

Consider a two-level process $\left(\left(\mathsf{X}(t),\mathsf{Y}(t)\right);t\ge 0\right)$ in $\tilde{\mathbb{W}}_{N,N}$ with the $\mathsf{X}$-process evolving as $N$ non-intersecting paths with transition semigroup $\mathfrak{P}_t^{(N),(\lambda_1,\dots,\lambda_N)}$ and $\mathsf{Y}$ as a system of $N$ $\widehat{\mathsf{L}^{\psi_{\lambda_{N+1}}}}$-diffusions reflected off $\mathsf{X}$ in $\tilde{\mathbb{W}}_{N,N}$. 

Let $\mathcal{M}(dy)$ be a probability measure on $\mathbb{W}_{N}$ and assume that $\left(\left(\mathsf{X}(t),\mathsf{Y}(t)\right);t\ge 0\right)$ is initialized according to $\mathcal{M}(dy)\mathfrak{L}_{2N,2N-1}(y,dx)$ in $\mathbb{W}_{N,N}$. Then, $\left(\mathsf{Y}(t);t\ge 0\right)$ is distributed as a diffusion process with semigroup $\hat{\mathcal{P}}_t^{N,\widetilde{\mathsf{\Psi}}^{(N)}}$ starting from $\mathcal{M}$. In particular, it consists of non-intersecting paths, namely for all $t\ge 0$: $\mathsf{Y}(t)\in \mathbb{W}_{N}$ and so $\left(\mathsf{X}(t),\mathsf{Y}(t)\right)\in \mathbb{W}_{N,N}$ . Moreover, for any $T\ge 0$ the distribution of $\left(\mathsf{X}(T),\mathsf{Y}(T)\right)$ in $\mathbb{W}_{N,N}$ is given by $\left[\mathcal{M}\hat{\mathcal{P}}_T^{N,\widetilde{\mathsf{\Psi}}^{(N)}}\right](dy)\mathfrak{L}_{2N,2N-1}\left(y,dx\right)$.
\end{prop}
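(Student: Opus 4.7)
The plan is to mirror the proof of Proposition \ref{TwoLevel1} by invoking Theorem 2.19 (together with Proposition 2.17 and Corollary 2.18) of \cite{InterlacingDiffusions}, with the roles of $L$ and $\hat L$ swapped compared to the previous proposition.

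Concretely, I would take the $L$-diffusion in the cited theorem to be $\widehat{\mathsf{L}^{\psi_{\lambda_{N+1}}}}$, so that by the involutive nature of the duality (\ref{DualDiffusion}) the associated dual diffusion $\hat L$ is $\mathsf{L}^{\psi_{\lambda_{N+1}}}$. The upper level of the machinery of \cite{InterlacingDiffusions}, consisting of $N$ copies of $L$-diffusions reflected off the lower process in $\tilde{\mathbb{W}}_{N,N}$, then matches our $\mathsf{Y}$ exactly. The lower level is obtained there as a Doob $h$-transform of $\mathcal{P}_t^N$ (the Karlin-McGregor semigroup of $N$ independent $\mathsf{L}^{\psi_{\lambda_{N+1}}}$-diffusions killed on intersection) by a strictly positive eigenfunction, and for this eigenfunction I would supply $\mathsf{\Psi}^{(N)}$, which is seen to have the required properties from the discussion between (\ref{Inter2}) and (\ref{DoobKMsemi1}). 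By Lemma \ref{SemigroupObs}, the resulting $h$-transform $\mathcal{P}_t^{N,\mathsf{\Psi}^{(N)}}$ coincides with $\mathfrak{P}_t^{(N),(\lambda_1,\dots,\lambda_N)}$, which matches the hypothesis imposed on $\mathsf{X}$.

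Before invoking Theorem 2.19 I would check its regularity and boundary hypotheses in this swapped setup. Smoothness and strict positivity of the coefficients are furnished by (\ref{smoothness}); the boundary classification, namely that $0$ and $\infty$ are inaccessible for $\mathsf{L}^{\psi_{\lambda_{N+1}}}$ while $0$ becomes exit and $\infty$ remains natural for $\widehat{\mathsf{L}^{\psi_{\lambda_{N+1}}}}$, follows from Lemma \ref{LemBoundaryBehaviour} together with (\ref{DualDiffusion}) and Feller's integral criterion, granted (\textbf{YW}). The Markov link selected by the construction is $\Lambda_{2N,2N-1}$ normalized by its action on the eigenfunction; by the identity (\ref{IntRel1}) this is precisely $\mathfrak{L}_{2N,2N-1}$ of Definition \ref{MarkovKernelDef}, and therefore the initial condition $\mathcal{M}(dy)\mathfrak{L}_{2N,2N-1}(y,dx)$ is exactly the Gibbs-type distribution that the theorem requires.

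Granted all this, Theorem 2.19 yields the three conclusions simultaneously: $\mathsf{Y}$ is Markovian with semigroup $\hat{\mathcal{P}}_t^{N,\widetilde{\mathsf{\Psi}}^{(N)}}$ started from $\mathcal{M}$, the pair $(\mathsf{X}(t),\mathsf{Y}(t))$ stays in $\mathbb{W}_{N,N}$ for all $t\ge 0$ (so in particular $\mathsf{Y}(t)\in \mathbb{W}_N$), and the joint law at any time $T$ equals $\bigl[\mathcal{M}\hat{\mathcal{P}}_T^{N,\widetilde{\mathsf{\Psi}}^{(N)}}\bigr](dy)\mathfrak{L}_{2N,2N-1}(y,dx)$. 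The main obstacle I anticipate is purely bookkeeping: identifying, in the swapped setup, the correct eigenfunction ($\mathsf{\Psi}^{(N)}$ here, as opposed to $\widetilde{\mathsf{\Psi}}^{(N)}$ used in Proposition \ref{TwoLevel1}) and the correct intertwining relation ((\ref{IntRel1}) and (\ref{Inter1}) in place of (\ref{IntRel2}) and (\ref{Inter2})), so that the multiplicative constants $\mathsf{c}_N$ appearing in (\ref{IntRel1}) cancel cleanly when the Markov normalization of $\mathfrak{L}_{2N,2N-1}$ is carried out and the output is indeed the semigroup $\hat{\mathcal{P}}_t^{N,\widetilde{\mathsf{\Psi}}^{(N)}}$ rather than a rescaled variant.
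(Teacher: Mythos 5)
Your proposal takes essentially the same route as the paper's own proof: both choose the $L$-diffusion in \cite{InterlacingDiffusions} to be $\widehat{\mathsf{L}^{\psi_{\lambda_{N+1}}}}$, supply $\mathsf{\Psi}^{(N)}$ as the strictly positive eigenfunction for the Karlin--McGregor semigroup $\mathcal{P}_t^N$ on the lower level, check the regularity and boundary hypotheses via (\ref{smoothness}) and Lemma \ref{LemBoundaryBehaviour}, and close the argument using the intertwining (\ref{IntRel1}) together with Lemma \ref{SemigroupObs}. The one detail to correct is the citation: the paper invokes Theorem 2.19 of \cite{InterlacingDiffusions} only for the $(N,N+1)$ case in Proposition \ref{TwoLevel1} (where a role reversal $\mathsf{Y}=X$, $\mathsf{X}=Y$ is explicitly needed); for the present $(N,N)$ two-level structure it appeals more broadly to ``the results of Section 2.4'', i.e.\ a companion result for the $(N,N)$ interlacing pattern rather than Theorem 2.19 itself, and correspondingly no role reversal is mentioned. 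This does not affect the mathematical substance of your argument, only the precise pointer into the reference.
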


\begin{proof}
We apply the results of Section 2.4 in \cite{InterlacingDiffusions} with the following correspondences. We take as the $L$-diffusion therein our $\widehat{\mathsf{L}^{\psi_{\lambda_{N+1}}}}$-diffusion. The regularity and boundary conditions therein are satisfied by our assumptions on the $\mathsf{L}$-diffusion above and by virtue of Lemma \ref{LemBoundaryBehaviour}. Finally, the strictly positive eigenfunction $\hat{h}_N$ for $N$ independent copies of $\hat{L}=\mathsf{L}^{\psi_{\lambda_{N+1}}}$ diffusions killed when they intersect, namely associated to $\mathcal{P}_t^N$, is given by our $\mathsf{\Psi}^{(N)}$. The conclusion then follows by noting (\ref{IntRel1}) and Lemma \ref{SemigroupObs}.

\end{proof}

\subsection{Multilevel dynamics in interlacing arrays}\label{SectionProofInteracting}

\begin{figure}[hbt!]
\centering
\captionsetup{singlelinecheck = false, justification=justified}
\begin{tikzpicture}
 
     \draw[red,fill] (0,0) circle [radius=0.1];   
     \draw[red,fill] (1,1) circle [radius=0.1];  
      \draw[red,fill] (0,2) circle [radius=0.1]; 
    \draw[red,fill] (2,2) circle [radius=0.1]; 
          \draw[red,fill] (1,3)circle [radius=0.1]; 
    \draw[red,fill] (3,3) circle [radius=0.1]; 
              \draw[red,fill] (0,4) circle [radius=0.1]; 
    \draw[red,fill] (2,4) circle [radius=0.1]; 
          \draw[red,fill] (4,4) circle [radius=0.1];

    \node[below right] at (0,0) {$\mathsf{x}_1^{(1)}$};
   \node[ left] at (0,0) {$\mathsf{L}^{\psi_{\lambda_1}}$};
   
   \draw[blue,ultra thick,->] (0.1,0.1) to (0.8,0.8);
   \node[below right] at (0.5,0.5) {$\textcolor{blue}{\mathfrak{l}_1^{(2),+}}$};
   
       \node[below right] at (1,1) {$\mathsf{x}_1^{(2)}$};
   \node[ left] at (0.9,1) {$\widehat{\mathsf{L}^{\psi_{\lambda_2}}}$};

      \node[below right] at (2,2) {$\mathsf{x}_2^{(3)}$};
   \node[ left] at (2,2) {$\mathsf{L}^{\psi_{\lambda_2}}$};

      \draw[blue,ultra thick,->] (1.1,1.1) to (1.8,1.8);
   \node[below right] at (1.5,1.5) {$\textcolor{blue}{\mathfrak{l}_2^{(3),+}}$};
 
       \node[ right] at (0.1,2) {$\mathsf{x}_1^{(3)}$};
   \node[ left] at (0,2) {$\mathsf{L}^{\psi_{\lambda_2}}$};

      \draw[blue,ultra thick,->] (0.9,1.1) to (0.2,1.8);
   \node[above right] at (0.5,1.3) {$\textcolor{blue}{\mathfrak{l}_1^{(3),-}}$};

    \node[below right] at (3,3) {$\mathsf{x}_2^{(4)}$};
   \node[ left] at (3,3) {$\widehat{\mathsf{L}^{\psi_{\lambda_3}}}$};

      \draw[blue,ultra thick,->] (2.1,2.1) to (2.8,2.8);
   \node[below right] at (2.5,2.5) {$\textcolor{blue}{\mathfrak{l}_2^{(4),+}}$};

       \node[ right] at (1.1,3) {$\mathsf{x}_1^{(4)}$};
   \node[ left] at (1,3) {$\widehat{\mathsf{L}^{\psi_{\lambda_3}}}$};
   
         \draw[blue,ultra thick,->] (1.9,2.1) to (1.2,2.8);
   \node[above right] at (1.5,2.3) {$\textcolor{blue}{\mathfrak{l}_1^{(4),-}}$};

            \draw[blue,ultra thick,->] (0.1,2.1) to (0.8,2.8);
   \node[above left] at (0.6,2.1) {$\textcolor{blue}{\mathfrak{l}_1^{(4),+}}$};

    \node[below right] at (4,4) {$\mathsf{x}_3^{(5)}$};
   \node[ left] at (4,4) {$\mathsf{L}^{\psi_{\lambda_3}}$};
   
    \node[right] at (2.1,4) {$\mathsf{x}_2^{(5)}$};
   \node[ left] at (2,4) {$\mathsf{L}^{\psi_{\lambda_3}}$};
   
    \node[right] at (0.1,4) {$\mathsf{x}_1^{(5)}$};
   \node[ left] at (0,4) {$\mathsf{L}^{\psi_{\lambda_3}}$};
   
               \draw[blue,ultra thick,->] (3.1,3.1) to (3.8,3.8);
   \node[below right] at (3.5,3.5) {$\textcolor{blue}{\mathfrak{l}_3^{(5),+}}$};
   
                 \draw[blue,ultra thick,->] (2.9,3.1) to (2.2,3.8);
   \node[above right] at (2.5,3.3) {$\textcolor{blue}{\mathfrak{l}_2^{(5),-}}$};

            \draw[blue,ultra thick,->] (1.1,3.1) to (1.8,3.8);
   \node[above left] at (1.65,3.2) {$\textcolor{blue}{\mathfrak{l}_2^{(5),+}}$};

          \draw[blue,ultra thick,->] (0.9,3.1) to (0.2,3.8);
   \node[ left] at (0.6,3.3) {$\textcolor{blue}{\mathfrak{l}_1^{(5),-}}$};

 \end{tikzpicture}
 \caption{A cartoon illustrating the dynamics (\ref{Dynamics}) for $N=3$ in $\mathfrak{HA}_{2N-1}$. On odd levels $2n-1$ we have $n$ $\mathsf{L}^{\psi_{\lambda_n}}$-diffusions while on even levels $2n$ we have $n$  $\widehat{\mathsf{L}^{\psi_{\lambda_{n+1}}}}$-diffusions. All of these are independent modulo the following interactions that keep them in $\mathfrak{HA}_{5}$. Coordinate $\mathsf{x}^{(2n)}_i$ gets an infinitesimal push (identified with $\frac{1}{2}d\mathfrak{l}^{(2n),+}_i$) to the right only at times when it collides with $\mathsf{x}_i^{(2n-1)}$ and an infinitesimal push (identified with $\frac{1}{2}d\mathfrak{l}^{(2n),-}_i$) to the left only at times when it collides with $\mathsf{x}_{i+1}^{(2n-1)}$ in order to remain between the barriers 
 $\mathsf{x}_i^{(2n-1)}$ and $\mathsf{x}_{i+1}^{(2n-1)}$ (away from the barriers it simply evolves as an $\widehat{\mathsf{L}^{\psi_{\lambda_{n+1}}}}$-diffusion). Similarly, coordinate $\mathsf{x}^{(2n-1)}_i$ gets an infinitesimal push (identified with $\frac{1}{2}d\mathfrak{l}^{(2n-1),+}_i$) to the right only at times when it collides with $\mathsf{x}_{i-1}^{(2n-2)}$ and an infinitesimal push (identified with $\frac{1}{2}d\mathfrak{l}^{(2n-1),-}_i$) to the left only at times when it collides with $\mathsf{x}_{i}^{(2n-2)}$ in order to remain between the barriers 
 $\mathsf{x}_{i-1}^{(2n-2)}$ and $\mathsf{x}_{i}^{(2n-2)}$ (away from the barriers it simply evolves as an $\mathsf{L}^{\psi_{\lambda_{n}}}$-diffusion). These interactions are denoted by the blue arrows in the figure.}\label{Figure2} 
 \end{figure}
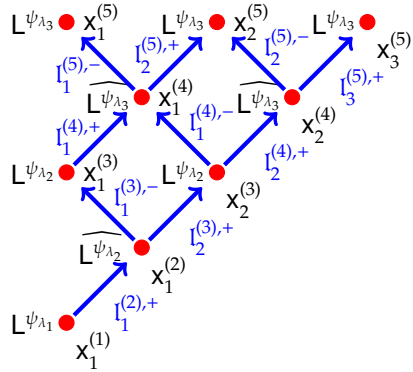

In this section we put everything together to obtain consistent dynamics in the interlacing half array. We consider the following system of SDEs in $\mathfrak{HA}_{2N-1}$
\begin{align}\label{Dynamics}
d\mathsf{x}_i^{(k)}(t)&=\sqrt{2\mathsf{a}\left(\mathsf{x}_i^{(k)}(t)\right)}d\mathsf{w}^{(k)}_i(t)+\mathsf{B}^{(k)}\left(\mathsf{x}_i^{(k)}(t)\right)dt+\frac{1}{2}d\mathfrak{l}^{(k),+}_i(t)-\frac{1}{2}d\mathfrak{l}^{(k),-}_i(t),
\end{align}
with $1\le k \le 2N-1$, $1\le i \le \left \lfloor \frac{k+1}{2}\right \rfloor$ and where the $\mathsf{w}_i^{(k)}$ are independent standard Brownian motions, the level dependent drift term $\mathsf{B}^{(k)}$ is given by
\begin{align}
\mathsf{B}^{(k)}(x)=\begin{cases}
\mathsf{b}\left(x\right)+2\mathsf{a}\left(x\right)\frac{\psi'_{\lambda_n}\left(x\right)}{\psi_{\lambda_n}\left(x\right)}, &\textnormal{ if }k=2n-1,\\
\mathsf{a}'\left(x\right)-\mathsf{b}\left(x\right)-2\mathsf{a}\left(x\right)\frac{\psi'_{\lambda_{n+1}}\left(x\right)}{\psi_{\lambda_{n+1}}\left(x\right)},  &\textnormal{ if }k=2n,\label{FunctionB}
\end{cases}
\end{align}
while the finite variation terms $\mathfrak{l}_i^{(k),\pm }$ can be identified\footnote{The fact that the finite variation terms $\mathfrak{l}_i^{(k),\pm}$ can be identified with the semimartingale local times at 0, see Section 5 of \cite{InterlacingDiffusions}, also Exercise 1.16 ($\textnormal{3}^\circ$) of Chapter VI of \cite{RevuzYor}, is of no significant importance here or in any of the works \cite{Warren,InterlacingDiffusions,Cerenzia} where analogous models have been studied. The statement simply becomes nicer. The only thing that really matters is that the finite variation terms $\mathfrak{k}_i^{(k),\pm}=\frac{1}{2}\mathfrak{l}_i^{(k),\pm }$ (to connect to the notation $\mathfrak{k}_i^{\pm}$ used when discussing diffusions reflected off general continuous non-intersecting paths) increase only when the corresponding coordinates collide so that they remain ordered.} with the semimartingale local times:
\begin{align}
\mathfrak{l}_i^{(k),+}=\begin{cases}
\textnormal{ sem. loc. time of } \mathsf{x}_i^{(k)}-\mathsf{x}^{(k-1)}_{i-1} \textnormal{ at 0 },&\textnormal{ if }k=2n-1,\\
\textnormal{ sem. loc. time of } \mathsf{x}_i^{(k)}-\mathsf{x}^{(k-1)}_{i}\textnormal{ at 0 },&\textnormal{ if } k=2n,
\end{cases}\label{SemLocTime1}\\
\mathfrak{l}_i^{(k),-}=\begin{cases}
\textnormal{ sem. loc. time of } \mathsf{x}_i^{(k)}-\mathsf{x}^{(k-1)}_{i}\textnormal{ at 0 },&\textnormal{ if }k=2n-1,\\
\textnormal{ sem. loc. time of } \mathsf{x}_i^{(k)}-\mathsf{x}^{(k-1)}_{i+1}\textnormal{ at 0 },&\textnormal{ if } k=2n,
\end{cases}\label{SemLocTime2}
\end{align}
and the terms for which the indices underflow or overflow in (\ref{SemLocTime1}) and (\ref{SemLocTime2}) are identically zero. In words the dynamics can be described as follows: levels $2n$ and $2n-1$ of the array consist of $n$ independent $\widehat{\mathsf{L}^{\psi_{\lambda_{n+1}}}}$ and $\mathsf{L}^{\psi_{\lambda_n}}$ diffusions reflected off (in the sense made precise in the previous section) the paths of level $2n-1$ and $2n-2$ respectively. See Figure \ref{Figure2} for an illustration of the interactions. By the results of Section 5 of \cite{InterlacingDiffusions} this SDE has a unique strong solution in $\mathfrak{HA}_{2N-1}$, under the assumption that all the $\mathsf{L}^{\psi_{\lambda_n}}$ and $\widehat{\mathsf{L}^{\psi_{\lambda_n}}}$-diffusions, equivalently $\left(\sqrt{a},\mathsf{B}^{(k)}\right)$ satisfy (\textbf{YW}), until the stopping time
\begin{align}\label{StoppingTime}
\tau_{\mathfrak{HA}_{2N-1}}=\inf\left\{t\ge 0:\exists \ 1\le k \le 2N-1 \textnormal { and } 1 \le i < j \le \left \lfloor \frac{k+1}{2} \right \rfloor \textnormal{ such that } \mathsf{x}^{(k)}_i(t)=\mathsf{x}^{(k)}_j(t)\right\}.
\end{align}
This stopping time corresponds to the problematic situation when two coordinates on the same level come together, in which case the coordinate on the next level that is between them becomes ``trapped" and pushed in opposing directions in an infinitesimally small interval. Under certain special initial conditions, that we define next, this issue does not arise.

\begin{defn}\label{DefinitionGibbs}
With the notations of this section, let $\mathcal{M}$ be a probability measure on $\mathbb{W}_{\left \lfloor \frac{N+1}{2} \right \rfloor}$. Then, by the Gibbs measure on $\mathfrak{HA}_N$ associated to $\mathcal{M}$, denoted by $\mathfrak{GM}_N^{\mathcal{M}}$, we mean the following probability measure
\begin{align}
 \mathfrak{GM}_N^{\mathcal{M}}\left(dx^{(1)},\dots,dx^{(N)}\right)=\mathcal{M}\left(dx^{(N)}\right) \prod_{i=1}^{N-1}\mathfrak{L}_{i+1,i}\left(x^{(i+1)},dx^{(i)}\right).
\end{align}
\end{defn}

\begin{rmk}
Clearly, although not indicated in the notation, $\mathfrak{GM}_N^{\mathcal{M}}$ is depended on the diffusion $\mathsf{L}$ and eigenvalues $\lambda_1,\dots,\lambda_N$ as well.
\end{rmk}


For the convenience of the reader we write out 
$\mathfrak{GM}_{2N-1}^{\mathcal{M}}$ explicitly.
\begin{prop}\label{ExplicitGibbs}
The explicit expression for $\mathfrak{GM}_{2N-1}^{\mathcal{M}}$ is the following
\begin{align*}
 \mathfrak{GM}_{2N-1}^{\mathcal{M}}\left(dx^{(1)},\dots,dx^{(2N-1)}\right)=\mathcal{M}\left(dx^{(2N-1)}\right)\mathsf{a}(c)^{\frac{N(N-1)}{2}}\frac{\prod_{1\le i < j \le N}(\lambda_j-\lambda_i)}{\det\left(\psi_{\lambda_i}\left(x_j^{(2N-1)}\right)\right)_{i,j=1}^N} \prod_{i=1}^N \psi_{\lambda_N}\left(x_i^{(2N-1)}\right)\\
 \times\prod_{n=1}^{N-1}\prod_{i=1}^n \psi_{\lambda_{n+1}}\left(x_i^{(2n-1)}\right)\psi_{\lambda_{n}}\left(x_i^{(2n-1)}\right)\mathfrak{m}\left(x_i^{(2n-1)}\right)\psi^{-2}_{\lambda_{n+1}}\left(x_i^{(2n)}\right)\hat{\mathfrak{m}}\left(x_i^{(2n)}\right)\\
 \times \mathbf{1}_{\left(\left(x^{(1)},\dots,x^{(2N-1)}\right)\in \mathfrak{HA}_{2N-1}\right)} dx^{(1)}\cdots dx^{(2N-2)},
\end{align*}
which is independent of $c\in (l,r)$.
\end{prop}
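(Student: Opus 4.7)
The proof will be purely computational: we substitute the explicit definitions of the Markov kernels $\mathfrak{L}_{i+1,i}$ from Definition \ref{MarkovKernelDef} into the product defining $\mathfrak{GM}_{2N-1}^{\mathcal{M}}$ and then reorganize terms level-by-level. The plan is to pair, for each $n=1,\ldots,N-1$, the two consecutive kernels $\mathfrak{L}_{2n+1,2n}(x^{(2n+1)},dx^{(2n)})$ and $\mathfrak{L}_{2n,2n-1}(x^{(2n)},dx^{(2n-1)})$ and exploit the key telescoping: the factor $\widetilde{\mathsf{\Psi}}^{(n)}(x^{(2n)})$ appears once in the numerator (from $\mathfrak{L}_{2n+1,2n}$) and once in the denominator (from $\mathfrak{L}_{2n,2n-1}$) and thus cancels. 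Each such pair then contributes $\mathsf{c}_n^{-1}\overline{\mathsf{\Psi}}^{(n+1)}(x^{(2n+1)})^{-1}\,\Lambda_{2n+1,2n}(x^{(2n+1)},dx^{(2n)})\Lambda_{2n,2n-1}(x^{(2n)},dx^{(2n-1)})\,\mathsf{\Psi}^{(n)}(x^{(2n-1)})$.

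Next I will group the $\mathsf{\Psi}$-factors across different $n$. For $n=1$ one uses $\mathsf{\Psi}^{(1)}(x^{(1)})=\psi_{\lambda_1}(x_1^{(1)})/\psi_{\lambda_2}(x_1^{(1)})$, which combines with the $\psi_{\lambda_2}^{2}(x_1^{(1)})\mathfrak{m}(x_1^{(1)})$ coming from $\Lambda_{2,1}$ to produce the desired $\psi_{\lambda_2}(x_1^{(1)})\psi_{\lambda_1}(x_1^{(1)})\mathfrak{m}(x_1^{(1)})$. For each odd level $2m-1$ with $2\le m\le N-1$ the numerator factor $\mathsf{\Psi}^{(m)}(x^{(2m-1)})$ pairs with the denominator factor $\overline{\mathsf{\Psi}}^{(m)}(x^{(2m-1)})$ (coming from $n=m-1$) and using the identity $\overline{\mathsf{\Psi}}^{(m)}=\prod_i\psi_{\lambda_m}/\psi_{\lambda_{m+1}}\cdot\mathsf{\Psi}^{(m)}$ their ratio is $\prod_{i=1}^{m}\psi_{\lambda_m}(x_i^{(2m-1)})/\psi_{\lambda_{m+1}}(x_i^{(2m-1)})$. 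Multiplied against $\prod_{i=1}^{m}\psi_{\lambda_{m+1}}^{2}(x_i^{(2m-1)})\mathfrak{m}(x_i^{(2m-1)})$ from $\Lambda_{2m,2m-1}$, this again yields $\prod_{i=1}^{m}\psi_{\lambda_{m+1}}\psi_{\lambda_m}\mathfrak{m}$ at level $2m-1$, matching the claim. The top-level factor $\overline{\mathsf{\Psi}}^{(N)}(x^{(2N-1)})^{-1}$ is never paired and produces precisely the leading ratio $\prod_{i=1}^{N}\psi_{\lambda_N}(x_i^{(2N-1)})\big/\det(\psi_{\lambda_i}(x_j^{(2N-1)}))_{i,j=1}^{N}$.

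It then remains to collect the constants. The $\psi_{\lambda_{n+1}}(c)^{2n}$ factors from $\Lambda_{2n+1,2n}$ and the $\psi_{\lambda_{n+1}}(c)^{-2n}$ factors from $\Lambda_{2n,2n-1}$ cancel within each pair, leaving no $c$-dependence from these. The product $\prod_{n=1}^{N-1}\mathsf{c}_n^{-1}$ with $\mathsf{c}_n=\prod_{i=1}^{n}[(\lambda_{n+1}-\lambda_i)\mathsf{a}(c)]^{-1}$ yields exactly $\mathsf{a}(c)^{N(N-1)/2}\prod_{1\le i<j\le N}(\lambda_j-\lambda_i)$, since the pairs $(i,n+1)$ with $1\le i\le n\le N-1$ are in bijection with pairs $1\le i<j\le N$. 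The indicators $\mathbf{1}_{(x^{(k)}\prec x^{(k+1)})}$ combine into $\mathbf{1}_{((x^{(1)},\ldots,x^{(2N-1)})\in\mathfrak{HA}_{2N-1})}$. This establishes the claimed formula.

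Finally, for the independence from $c\in(l,r)$, I will observe that each $\hat{\mathfrak{m}}$ carries an explicit factor $\mathsf{a}(c)^{-1}$, and that the total number of $\hat{\mathfrak{m}}$ factors is $\sum_{n=1}^{N-1}n=N(N-1)/2$, which precisely cancels the $\mathsf{a}(c)^{N(N-1)/2}$ prefactor; moreover, since the number of $\mathfrak{m}(x)\sim\exp\bigl(\int_c^{x}\mathsf{b}/\mathsf{a}\bigr)$ factors equals the number of $\mathfrak{s}'(x)\sim\exp\bigl(-\int_c^{x}\mathsf{b}/\mathsf{a}\bigr)$ factors (both equal to $N(N-1)/2$), any change $c\rightsquigarrow\tilde{c}$ produces equal and opposite contributions to the exponents that cancel. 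The bookkeeping of which $\mathsf{\Psi}$-factor lives at which level is the only real point where care is needed, but it is mechanical once the telescoping is identified.
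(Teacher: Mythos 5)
Your proof is correct and follows essentially the same route as the paper: expand the product of kernels, cancel the telescoping $\widetilde{\mathsf{\Psi}}^{(n)}$ factors, collect the $\mathsf{\Psi}$/$\overline{\mathsf{\Psi}}$ ratios and $\psi^2\mathfrak{m}$ contributions level by level, assemble the constants $\prod_n\mathsf{c}_n^{-1}$, and verify $c$-independence by pairing $\mathfrak{m}$ and $\hat{\mathfrak{m}}$ factors against the $\mathsf{a}(c)$ prefactor. One small write-up remark: the intermediate identity you quote, $\overline{\mathsf{\Psi}}^{(m)}=\prod_i\bigl(\psi_{\lambda_m}/\psi_{\lambda_{m+1}}\bigr)\mathsf{\Psi}^{(m)}$, has the ratio inverted (it should read $\psi_{\lambda_{m+1}}/\psi_{\lambda_m}$; the same typo appears in the paper's display defining $\overline{\mathsf{\Psi}}^{(n)}$), but the ratio $\mathsf{\Psi}^{(m)}/\overline{\mathsf{\Psi}}^{(m)}=\prod_i\psi_{\lambda_m}/\psi_{\lambda_{m+1}}$ that you actually use in the bookkeeping is the correct one, as read directly from the determinant expressions in the second equality of that display.
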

\begin{proof}
Observe that 
\begin{equation*}
   \mathfrak{GM}_{2N-1}^{\mathcal{M}}\left(dx^{(1)},\dots,dx^{(2N-1)}\right) =\mathcal{M}\left(dx^{(2N-1)}\right) \prod_{n=1}^{N-1}\mathfrak{L}_{2n+1,2n}\left(x^{(2n+1)},dx^{(2n)}\right) \mathfrak{L}_{2n,2n-1}\left(x^{(2n)},dx^{(2n-1)}\right)
\end{equation*}
where from Definition \ref{MarkovKernelDef}, by virtue of equations (\ref{PreMarkovKernel1}) and (\ref{PreMarkovKernel2}), we have
\begin{align*}
\mathfrak{L}_{2n+1,2n}\left(x^{(2n+1)},dx^{(2n)}\right) \mathfrak{L}_{2n,2n-1}\left(x^{(2n)},dx^{(2n-1)}\right) = \frac{1}{\mathsf{c}_n}\frac{\mathsf{\Psi}^{(n)}\left(x^{(2n-1)}\right)}{\overline{\mathsf{\Psi}}^{(n+1)}\left(x^{(2n+1)}\right)}\prod_{i=1}^n\hat{\mathfrak{m}}\left(x_i^{(2n)}\right)\mathfrak{m}\left(x_i^{(2n-1)}\right)\\
\prod_{i=1}^n \psi_{\lambda_{n+1}}^{-2}\left(x_i^{(2n)}\right)\psi_{\lambda_{n+1}}^{2}\left(x_i^{(2n-1)}\right) \mathbf{1}_{\left(x^{(2n-1)}\prec x^{(2n)} \prec x^{(2n+1)}\right)} dx^{(2n)}dx^{(2n-1)}.
\end{align*}
Moreover, by using equation (\ref{FunctionDisplay1}), we obtain
\begin{align*}
 \prod_{n=1}^{N-1}\frac{\mathsf{\Psi}^{(n)}\left(x^{(2n-1)}\right)}{\overline{\mathsf{\Psi}}^{(n+1)}\left(x^{(2n+1)}\right)}=\frac{\psi_{\lambda_1}\left(x_1^{(1)}\right)}{\det \left(\psi_{\lambda_i}\left(x_j^{(2N-1)}\right)\right)_{i,j=1}^N}\prod_{n=1}^{N-1}\prod_{i=1}^{n+1}\psi_{\lambda_{n+1}}\left(x_i^{(2n+1)}\right)\prod_{i=1}^n\frac{1}{\psi_{\lambda_{n+1}}\left(x_i^{(2n-1)}\right)}.
\end{align*}
By putting everything together, after some relabelling of the indices and recalling the definition of $\mathsf{c}_n$ from (\ref{constant}), we obtain the desired expression for $\mathfrak{GM}_{2N-1}^{\mathcal{M}}$. Finally, the fact that the expression is independent, as it should be, of the choice of $c\in (l,r)$ can be seen as follows. An easy computation gives $\hat{\mathfrak{m}}(z)=\frac{1}{\mathsf{a}(c)}\exp \left(-\int_{c}^z\frac{\mathsf{b}(y)}{\mathsf{a}(y)}dy\right)$. Then, we see that for any $z_1,z_2$ the expression
\begin{equation*}
 \mathsf{a}(c)\mathfrak{m}(z_1) \hat{\mathfrak{m}}(z_2)=\exp \left(\int_{z_2}^{z_1}\frac{\mathsf{b}(y)}{\mathsf{a}(y)}dy\right) 
\end{equation*}
is independent of the choice of $c\in (l,r)$, which by collecting terms as $\mathsf{a}(c)\mathfrak{m}\left(x_i^{(2n-1)}\right)\hat{\mathfrak{m}}\left(x_i^{(2n)}\right)$ gives the conclusion.
\end{proof}

We then have the following general result but before stating it let us recall the required ingredients. The basic data is a diffusion generator $\mathsf{L}$ (subject to certain conditions in the formal statement below) and a sequence of eigenvalues $\lambda_1<\dots<\lambda_N$. Associated to this data are strictly positive eigenfunctions $\psi_{\lambda_1},\dots, \psi_{\lambda_N}$ as in (\ref{EigenfunctionDef}), the density $\mathfrak{m}$  of the speed measure from (\ref{speedmeasure}) and the density $\hat{\mathfrak{m}}$ of the speed measure  of the dual diffusion $\widehat{\mathsf{L}}$ defined in (\ref{DualDiffusion}). Then, given a probability measure $\mathcal{M}$ on $\mathbb{W}_N$ we can construct the Gibbs measure $\mathfrak{GM}_{2N-1}^{\mathcal{M}}$ on $\mathfrak{HA}_{2N-1}$ through the explicit expression in Proposition \ref{ExplicitGibbs}. Moreover, from $\mathsf{L}$ and the eigenfunctions $\psi_{\lambda_i}$ we can construct interacting SDEs in $\mathfrak{HA}_{2N-1}$ as in (\ref{Dynamics}). Finally, recall the semigroup of diffusions conditioned to never intersect in (\ref{ConditionedSemigroup}).

\begin{prop}\label{GeneralInteracting}
Let $\mathsf{L}$ be a one-dimensional diffusion process generator satisfying (\ref{smoothness}) in $(0,\infty)$ so that $0$ is an entrance boundary point while $\infty$ is a natural boundary point and with positive increasing eigenfunctions $\psi_\lambda$ as in (\ref{EigenfunctionDef}).

Let $N\ge 1$ and $\lambda_1<\cdots <\lambda_N$ and assume that $\mathsf{L}^{\psi_{\lambda_n}}$ and $\widehat{\mathsf{L}^{\psi_{\lambda_{n}}}}$ satisfy the (\textbf{YW}) condition. Consider a probability measure $\mathcal{M}$ on $\mathbb{W}_N$ and suppose that the dynamics (\ref{Dynamics}) of the process $\left(\left(\mathsf{x}^{(1)}(t),\dots,\mathsf{x}^{(2N-1)}(t)\right);t\ge 0\right)$ in $\mathfrak{HA}_{2N-1}$ are initialized according to the Gibbs measure $\mathfrak{GM}_{2N-1}^{\mathcal{M}}$. 

Then, the projection on an odd level of the array $\left(\mathsf{x}^{(2n-1)}(t);t\ge 0\right)$ evolves according to $\mathfrak{P}^{(n),(\lambda_1,\dots,\lambda_n)}_t$ while on an even level $\left(\mathsf{x}^{(2n)}(t);t\ge 0\right)$ according to $\hat{\mathcal{P}}_t^{n,\widetilde{\mathsf{\Psi}}^{(n)}}$, in particular $\tau_{\mathfrak{HA}_{2N-1}}=\infty$. Finally, for a fixed time $T\ge 0$ the distribution of $\left(\mathsf{x}^{(1)}(T),\dots,\mathsf{x}^{(2N-1)}(T)\right)$ is given by the evolved Gibbs measure $\mathfrak{GM}_{2N-1}^{\mathcal{M}\mathfrak{P}_T^{(N),(\lambda_1,\dots,\lambda_N)}}$.
\end{prop}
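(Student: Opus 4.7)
The plan is to proceed by induction on $N$, with Propositions \ref{TwoLevel2} and \ref{TwoLevel1} supplying the two-step engine that adds the top pair of rows to a half-array of height $2N-3$. The base case $N=1$ is immediate: the SDE (\ref{Dynamics}) reduces to a single $\mathsf{L}^{\psi_{\lambda_1}}$-diffusion (the local-time terms vanish), whose semigroup coincides with $\mathfrak{P}_t^{(1),(\lambda_1)}$, and $\mathfrak{GM}_1^{\mathcal{M}}=\mathcal{M}$.

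For the inductive step I would first exploit the hierarchical nature of (\ref{Dynamics}): since level $k$'s SDE involves only levels $k$ and $k-1$ through the local-time terms, levels $1,\dots,2N-3$ form an autonomous closed sub-system. Its initial law, the marginal of $\mathfrak{GM}_{2N-1}^{\mathcal{M}}$ on those rows, is computed directly from Definition \ref{DefinitionGibbs} to equal $\mathfrak{GM}_{2N-3}^{\tilde{\mathcal{M}}}$ with $\tilde{\mathcal{M}}=\mathcal{M}\mathfrak{L}_{2N-1,2N-2}\mathfrak{L}_{2N-2,2N-3}$. The inductive hypothesis then yields that level $2N-3$ evolves with semigroup $\mathfrak{P}_t^{(N-1),(\lambda_1,\dots,\lambda_{N-1})}$, that the restricted stopping time is infinite, and that the sub-array at time $T$ has distribution $\mathfrak{GM}_{2N-3}^{\tilde{\mathcal{M}}\mathfrak{P}_T^{(N-1),(\lambda_1,\dots,\lambda_{N-1})}}$.

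Armed with this, the next step is to apply Proposition \ref{TwoLevel2} with parameter $N-1$ to the pair (level $2N-3$, level $2N-2$). The $\mathsf{X}$-process (level $2N-3$) has the required semigroup by the inductive hypothesis; level $2N-2$ is, by the definition of $\mathsf{B}^{(2n)}$ in (\ref{FunctionB}), a system of $\widehat{\mathsf{L}^{\psi_{\lambda_N}}}$-diffusions reflected off level $2N-3$; and the initial joint law, being a marginal of $\mathfrak{GM}_{2N-1}^{\mathcal{M}}$, is the Gibbs product $[\mathcal{M}\mathfrak{L}_{2N-1,2N-2}](dy)\mathfrak{L}_{2N-2,2N-3}(y,dx)$. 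The proposition then gives that level $2N-2$ evolves as $\hat{\mathcal{P}}_t^{N-1,\widetilde{\mathsf{\Psi}}^{(N-1)}}$ and that the pair stays Gibbs. A second call, this time to Proposition \ref{TwoLevel1} with parameter $N-1$ applied to the pair (level $2N-2$, level $2N-1$), identifies the top-level semigroup as $\mathfrak{P}_t^{(N),(\lambda_1,\dots,\lambda_N)}$; non-collision on the whole array, hence $\tau_{\mathfrak{HA}_{2N-1}}=\infty$, then drops out from the non-intersecting nature of each individual level under these semigroups.

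The main obstacle will be the final step: upgrading the three pair-wise Gibbs statements (bottom sub-array, pair $(2N-3,2N-2)$, pair $(2N-2,2N-1)$) to the claimed global identity $\mathfrak{GM}_{2N-1}^{\mathcal{M}\mathfrak{P}_T^{(N),(\lambda_1,\dots,\lambda_N)}}$ at time $T$. I would handle this through a conditional independence argument along rows: by the hierarchy of (\ref{Dynamics}), conditional on level $k$'s trajectory on $[0,T]$, the evolutions of levels $\{1,\dots,k-1\}$ and of levels $\{k+1,\dots,2N-1\}$ decouple (the former is autonomous of the latter, while the latter sees the former only through level $k$), which is precisely the Markov-along-rows property characterising a Gibbs measure. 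Consistency of the top marginal as it is propagated down by the chain of $\mathfrak{L}$-kernels is then secured by the intertwinings $\mathfrak{P}_t^{(n+1),\ldots}\mathfrak{L}_{2n+1,2n}=\mathfrak{L}_{2n+1,2n}\hat{\mathcal{P}}_t^{n,\widetilde{\mathsf{\Psi}}^{(n)}}$ and $\hat{\mathcal{P}}_t^{n,\widetilde{\mathsf{\Psi}}^{(n)}}\mathfrak{L}_{2n,2n-1}=\mathfrak{L}_{2n,2n-1}\mathfrak{P}_t^{(n),\ldots}$ recorded after the two-level propositions.
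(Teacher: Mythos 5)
Your argument is correct and follows essentially the same strategy as the paper's proof: induction combined with alternating applications of Propositions~\ref{TwoLevel2} and~\ref{TwoLevel1}, with the global Gibbs property at time $T$ secured by the conditional independence of rows (the paper states this as: $\mathsf{x}^{(2N-1)}$ is conditionally independent of $\mathsf{x}^{(1)},\dots,\mathsf{x}^{(2N-3)}$ given $\mathsf{x}^{(2N-2)}$). The only cosmetic difference is that you induct on $N$ and strip off two rows per step, whereas the paper inducts on the length of the half-array and peels off one row at a time, invoking a single two-level proposition per step.
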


\begin{proof}
The argument is standard and has been employed in a number of different settings \cite{Warren,WarrenWindridge,Toda,InterlacingDiffusions,Randomgrowth,Sun,Cerenzia}. Namely, the proof is by induction on the length of the interlacing array making alternating use of Proposition \ref{TwoLevel1} and Proposition \ref{TwoLevel2}. First, note that the base case is given by Proposition \ref{TwoLevel2} with $N=1$. Now, for the inductive step consider the maps $\Pi_{n-1}^n:\mathfrak{HA}_n\to \mathfrak{HA}_{n-1}$ given by $\Pi_{n-1}^n\left[\left(x^{(i)}\right)_{i=1}^{n}\right]=\left(x^{(i)}\right)_{i=1}^{n-1}$. Observe that
\begin{align*}
\left(\Pi_{2N-2}^{2N-1}\right)_*\mathfrak{GM}^\mathcal{M}_{2N-1}=\mathfrak{GM}_{2N-2}^{\mathcal{M}\mathfrak{L}_{2N-1,2N-2}}.
\end{align*}
We can thus use the inductive hypothesis on the process $\left(\mathsf{x}^{(1)},\dots,\mathsf{x}^{(2N-2)}\right)$. We then apply Proposition \ref{TwoLevel1} to the pair $\left(\mathsf{x}^{(2N-2)},\mathsf{x}^{(2N-1)}\right)$ from which the conclusion follows, noting that by construction $\mathsf{x}^{(2N-1)}$ is conditionally independent of $\left(\mathsf{x}^{(1)},\dots,\mathsf{x}^{(2N-3)}\right)$ given $\mathsf{x}^{(2N-2)}$.
\end{proof}


We finally prove Theorem \ref{InteractingDiffusionsThm} as a corollary of Proposition \ref{GeneralInteracting} above.

\begin{proof}[Proof of Theorem \ref{InteractingDiffusionsThm}]
We apply Proposition \ref{GeneralInteracting} with the $\mathsf{L}$-diffusion being a $\textnormal{BESQ}(\delta)$ process with $\delta\ge 2$. It is well-known, see \cite{RevuzYor,IkedaWatanabe}, that $0$ is an entrance boundary point for this parameter range while $\infty$ is always natural. Moreover, the (\textbf{YW}) condition for the $\mathsf{L}^{\psi_{\lambda_n}}$ and $\widehat{\mathsf{L}^{\psi_{\lambda_{n}}}}$ diffusions (for all $\delta>0$ in fact) readily follows from the one for $\textnormal{BESQ}(\delta)$ which is well-known \cite{RevuzYor,IkedaWatanabe}, except for the fact that the drift term coming from the Doob transform is Lipschitz which we check below. 

This boils down to showing that $x\mapsto \sqrt{x}\frac{I_{\nu+1}\left(\sqrt{x}\right)}{I_\nu\left(\sqrt{x}\right)}$ is Lipschitz on $[0,\infty)$. Now, some elementary computations using the standard identity $\frac{d}{dz}\left(z^\nu I_\nu(z)\right)=z^\nu I_{\nu-1}(z)$ give
\begin{align*}
    \frac{d}{dx}\left[\sqrt{x}\frac{I_{\nu+1}\left(\sqrt{x}\right)}{I_\nu\left(\sqrt{x}\right)}\right]=\frac{1}{2}\left[1-\frac{I_{\nu-1}(\sqrt{x})I_{\nu+1}(\sqrt{x})}{I_{\nu}(\sqrt{x})^2}\right].
\end{align*}
We note that, for $\nu=0$ we have $I_{-1}(z)=I_{1}(z)$ as a special case of the well-known symmetry of the modified Bessel function $I_{-n}(z)=I_n(z)$, with $n \in \mathbb{Z}$. The conclusion then follows from the fact that
\begin{align*}
  \sup_{x\in [0,\infty)}\frac{I_{\nu-1}(\sqrt{x})I_{\nu+1}(\sqrt{x})}{I_{\nu}(\sqrt{x})^2}<\infty,
\end{align*}
which is a consequence of the well-known asymptotics for Bessel functions at zero and infinity:
\begin{align*}
I_{\nu}(z)\sim \frac{e^z}{\sqrt{2\pi z}} \left(1+\mathcal{O}_\nu\left(\frac{1}{z}\right)\right) , \  \textnormal{ as } z \to \infty \ \ \textnormal{ and } \ \
I_{\nu}(z)\sim \frac{1}{\Gamma(\nu+1)2^\nu} z^\nu, \  \textnormal{ as } z \to 0, \ \textnormal{ for } \nu>-1.
\end{align*}
Note that, the asymptotics at zero above also cover $I_{-1}(z)$ since $I_{-1}(z)=I_{1}(z)$.
\end{proof}

\begin{rmk} Proposition \ref{GeneralInteracting} also applies to a more general explicit example. Consider the so-called squared radial Ornstein-Uhlenbeck process, also known as the Cox-Ingersoll-Ross (CIR) process in mathematical finance, see \cite{BorodinSalminen,SurveyBessel}, given by the strong solution to the SDE in $(0,\infty)$
\begin{align*}
  d\mathsf{x}(t)=2\sqrt{\mathsf{x}(t)}d\mathsf{w}(t)+\left(\delta-2\gamma \mathsf{x}(t)\right)dt,
\end{align*}
with $\mathsf{w}$ a standard Brownian motion, where we assume $\delta\ge 2$ and $\gamma\in \mathbb{R}$. The $\textnormal{BESQ}(\delta)$ process is the special case $\gamma=0$. This process is of particular interest when $\gamma>0$, in which case it is positive recurrent \cite{BorodinSalminen}. The eigenfunctions $\psi_{\lambda}$ of the corresponding generator $\mathsf{L}$ are explicit and given in terms of the confluent hypergeometric function, see \cite{BorodinSalminen,SurveyBessel}. Moreover, the transition density is also explicit, see \cite{BorodinSalminen}. It is known, see \cite{BorodinSalminen,SurveyBessel}, that for $\delta \ge 2$ and $\gamma\in \mathbb{R}$, $0$ is an entrance boundary point while $\infty$ is a natural boundary point. Also, (\ref{smoothness}) is clearly true. The only remaining condition of Proposition \ref{GeneralInteracting} to check is (\textbf{YW}) for the diffusions $\mathsf{L}^{\psi_{\lambda}}$ and $\widehat{\mathsf{L}^{\psi_{\lambda}}}$, which again boils down to showing Lipschitz continuity for the drift term coming from the Doob transform. As above, this can be shown using standard properties of the confluent hypergeometric function. We omit the details.
\end{rmk}

\section{Further results on interacting diffusions}\label{FurtherInteractingSection}

\subsection{The degenerate case: alternating construction}\label{SectionDegenerate}
In this section, expanding on Remark \ref{RemarkDegenerate}, we briefly consider the degenerate case when the parameters $\lambda_i$ are identically zero. We call this the alternating construction since we alternate between using an $\mathsf{L}$ or $\widehat{\mathsf{L}}$-diffusion on each level of the array, see Figure \ref{Figure3} for an illustration. In principle it should be possible to obtain the results presented next by taking the limit of parameters $\lambda_1,\dots,\lambda_N \to 0$ in the previous section's results. This turns out to be a singular limit for a number of the formulae involved and we do not try to justify\footnote{One needs to understand $\psi_\lambda(x)$ as a function of $\lambda$ (after the normalizing multiplicative constant is fixed), in particular as $\lambda \to 0$. This seems to be trickier in general than understanding its behaviour as a function of $x$. In the case of $\textnormal{BESQ}(\delta)$ we of course have the symmetry $x\leftrightarrow \lambda$: $\phi_\lambda^{(\nu)}(x)=\phi_{x}^{(\nu)}(\lambda)$, but this seems to be very special.} it rigorously here, see Remark \ref{LambdaParameterLimit} as well. Instead we redo the whole analysis from the beginning, partly because the situation is much more straightforward\footnote{It is worth noting that even after knowing the results in the degenerate case it is still not readily clear what the right way to introduce the parameters is (as we did in the previous sections).} compared to the setting of non-zero parameters. 

A discrete analogue of this alternating construction involving general birth and death chains interacting through the so-called push-block dynamics, which includes previous models related to representation theory \cite{BorodinKuan,Cerenzia} and classical orthogonal polynomials \cite{CerenziaKuan}, appeared in \cite{Randomgrowth}. One might expect that in the same scaling limit of a general birth and death chain to a general $\mathsf{L}$-diffusion the construction of \cite{Randomgrowth} should converge as a process to the one below. Such a statement has been proven in the case of the Brownian model of \cite{Warren} in \cite{GorinShkolnikov} and presumably by generalizing the results of \cite{GorinShkolnikov} it should be possible 
to confirm this prediction.

The dynamics in the degenerate case are given by the system of SDEs in $\mathfrak{HA}_{2N-1}$
\begin{align}\label{DegenerateDynamics}
d\mathsf{x}_i^{(k)}(t)&=\sqrt{2\mathsf{a}\left(\mathsf{x}_i^{(k)}(t)\right)}d\mathsf{w}^{(k)}_i(t)+\mathfrak{B}^{(k)}\left(\mathsf{x}_i^{(k)}(t)\right)dt+\frac{1}{2}d\mathfrak{l}^{(k),+}_i(t)-\frac{1}{2}d\mathfrak{l}^{(k),-}_i(t),
\end{align}
with $1\le k \le 2N-1$, $1\le i \le \left \lfloor \frac{k+1}{2}\right \rfloor$ and where the $\mathsf{w}_i^{(k)}$ are independent standard Brownian motions, the level dependent drift term $\mathfrak{B}^{(k)}$ is given by:
\begin{align}
\mathfrak{B}^{(k)}(x)=\begin{cases}
\mathsf{b}\left(x\right), &\textnormal{ if }k=2n-1,\\
\mathsf{a}'\left(x\right)-\mathsf{b}\left(x\right),  &\textnormal{ if }k=2n,\label{FunctionBfrak}
\end{cases}
\end{align}
and the finite variation terms $\mathfrak{l}^{(k),\pm}_i$ can be identified with the corresponding semimartingale local times (\ref{SemLocTime1}), (\ref{SemLocTime2}) as before. See Figure \ref{Figure3} for an illustration of the interactions. Again, see Section 5 in \cite{InterlacingDiffusions}, these SDEs have a unique strong solution, under the (\textbf{YW}) condition for $\left(\sqrt{\mathsf{a}},\mathsf{b}\right)$ and $\left(\sqrt{\mathsf{a}},\mathsf{a}'-\mathsf{b}\right)$, until the stopping time $\tau_{\mathfrak{HA}_{2N-1}}$ from (\ref{StoppingTime}) when two coordinates on the same level collide. As before, as we see below for Gibbs initial conditions this issue does not arise.

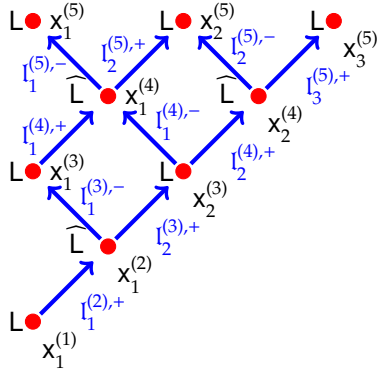
\begin{figure}[hbt!]
\centering
\captionsetup{singlelinecheck = false, justification=justified}
\begin{tikzpicture}
 
     \draw[red,fill] (0,0) circle [radius=0.1];   
     \draw[red,fill] (1,1) circle [radius=0.1];  
      \draw[red,fill] (0,2) circle [radius=0.1]; 
    \draw[red,fill] (2,2) circle [radius=0.1]; 
          \draw[red,fill] (1,3)circle [radius=0.1]; 
    \draw[red,fill] (3,3) circle [radius=0.1]; 
              \draw[red,fill] (0,4) circle [radius=0.1]; 
    \draw[red,fill] (2,4) circle [radius=0.1]; 
          \draw[red,fill] (4,4) circle [radius=0.1];

    \node[below right] at (0,0) {$\mathsf{x}_1^{(1)}$};
   \node[ left] at (0,0) {$\mathsf{L}$};
   
   \draw[blue,ultra thick,->] (0.1,0.1) to (0.8,0.8);
   \node[below right] at (0.5,0.5) {$\textcolor{blue}{\mathfrak{l}_1^{(2),+}}$};
   
       \node[below right] at (1,1) {$\mathsf{x}_1^{(2)}$};
   \node[ left] at (0.8,1.1) {$\widehat{\mathsf{L}}$};

      \node[below right] at (2,2) {$\mathsf{x}_2^{(3)}$};
   \node[ left] at (2,2) {$\mathsf{L}$};

      \draw[blue,ultra thick,->] (1.1,1.1) to (1.8,1.8);
   \node[below right] at (1.5,1.5) {$\textcolor{blue}{\mathfrak{l}_2^{(3),+}}$};
 
       \node[ right] at (0.1,2) {$\mathsf{x}_1^{(3)}$};
   \node[ left] at (0,2) {$\mathsf{L}$};

      \draw[blue,ultra thick,->] (0.9,1.1) to (0.2,1.8);
   \node[above right] at (0.5,1.3) {$\textcolor{blue}{\mathfrak{l}_1^{(3),-}}$};

    \node[below right] at (3,3) {$\mathsf{x}_2^{(4)}$};
   \node[ left] at (2.8,3.1) {$\widehat{\mathsf{L}}$};

      \draw[blue,ultra thick,->] (2.1,2.1) to (2.8,2.8);
   \node[below right] at (2.5,2.5) {$\textcolor{blue}{\mathfrak{l}_2^{(4),+}}$};

       \node[ right] at (1.1,3) {$\mathsf{x}_1^{(4)}$};
   \node[ left] at (0.8,3.1) {$\widehat{\mathsf{L}}$};
   
         \draw[blue,ultra thick,->] (1.9,2.1) to (1.2,2.8);
   \node[above right] at (1.5,2.3) {$\textcolor{blue}{\mathfrak{l}_1^{(4),-}}$};

            \draw[blue,ultra thick,->] (0.1,2.1) to (0.8,2.8);
   \node[above left] at (0.6,2.1) {$\textcolor{blue}{\mathfrak{l}_1^{(4),+}}$};

    \node[below right] at (4,4) {$\mathsf{x}_3^{(5)}$};
   \node[ left] at (4,4) {$\mathsf{L}$};
   
    \node[right] at (2.1,4) {$\mathsf{x}_2^{(5)}$};
   \node[ left] at (2,4) {$\mathsf{L}$};
   
    \node[right] at (0.1,4) {$\mathsf{x}_1^{(5)}$};
   \node[ left] at (0,4) {$\mathsf{L}$};
   
               \draw[blue,ultra thick,->] (3.1,3.1) to (3.8,3.8);
   \node[below right] at (3.5,3.5) {$\textcolor{blue}{\mathfrak{l}_3^{(5),+}}$};
   
                 \draw[blue,ultra thick,->] (2.9,3.1) to (2.2,3.8);
   \node[above right] at (2.5,3.3) {$\textcolor{blue}{\mathfrak{l}_2^{(5),-}}$};

            \draw[blue,ultra thick,->] (1.1,3.1) to (1.8,3.8);
   \node[above left] at (1.65,3.2) {$\textcolor{blue}{\mathfrak{l}_2^{(5),+}}$};

          \draw[blue,ultra thick,->] (0.9,3.1) to (0.2,3.8);
   \node[ left] at (0.6,3.3) {$\textcolor{blue}{\mathfrak{l}_1^{(5),-}}$};

 \end{tikzpicture}
 \caption{A cartoon illustrating the dynamics (\ref{DegenerateDynamics}) for $N=3$ in $\mathfrak{HA}_{2N-1}$. On odd levels $2n-1$ we have $n$ $\mathsf{L}$-diffusions while on even levels $2n$ we have $n$  $\widehat{\mathsf{L}}$-diffusions. All of these are independent modulo the interactions, as described in the completely analogous setting of Figure \ref{Figure3}, when they collide, that keep them in $\mathfrak{HA}_{5}$.}\label{Figure3}
 \end{figure}

In what follows we  will slightly abuse notation and for the most part use the same notations as in the preceding sections. Consider the following positive kernels $\Lambda_{2n,2n-1}$ and $\Lambda_{2n+1,2n}$ from $\mathbb{W}_n$ to $\mathbb{W}_n$ and from $\mathbb{W}_{n+1}$ to $\mathbb{W}_n$ respectively:
\begin{align*}
 \Lambda_{2n,2n-1}\left(y,dx\right)=\prod_{i=1}^n\mathfrak{m}(x_i)\mathbf{1}_{\left(x\prec y\right)}dx,\ \ \
  \Lambda_{2n+1,2n}\left(y,dx\right)=\prod_{i=1}^n\hat{\mathfrak{m}}(x_i)\mathbf{1}_{\left(x\prec y\right)}dx.
\end{align*}
Define the following functions, strictly positive on $\mathbb{W}_n$, with the convention $\mathfrak{h}^{(1)}(x)\equiv 1$:
\begin{align*}
 \mathfrak{h}^{(n)}(x)&=\left[\Lambda_{2n-1,2n-2}\Lambda_{2n-2,2n-3}\cdots \Lambda_{3,2}\Lambda_{2,1}1\right](x), \ \ x\in \overline{\mathbb{W}}_n,\\  
 \hat{\mathfrak{h}}^{(n)}(x)&=\left[\Lambda_{2n,2n-1}\Lambda_{2n-1,2n-2}\cdots \Lambda_{3,2}\Lambda_{2,1}1\right](x), \ \ x\in \overline{\mathbb{W}}_n.
\end{align*}

\begin{rmk}\label{RmkChebyshev}
By induction it is easy to see that $\mathfrak{h}^{(n)}$ and $\hat{\mathfrak{h}}^{(n)}$ have representations as determinants. More precisely, there exist sequences of functions $\{h_i\}_{i=1}^\infty$ and $\{ \hat{h}_i\}_{i=1}^\infty$ obtained recursively, which have a multiple integral representation\footnote{This is reminiscent to the representation of extended complete Chebyshev systems from approximation theory, see \cite{Karlin}.} (that we do not present here) so that, for any $n\ge 1$ and $x\in \mathbb{W}_n$:
\begin{align*}
  \mathfrak{h}^{(n)}(x)=\det\left(h_i(x_j)\right)_{i,j=1}^n, \ \ 
  \hat{\mathfrak{h}}^{(n)}(x)=\det\left(\hat{h}_i(x_j)\right)_{i,j=1}^n.
\end{align*}
\end{rmk}

\begin{rmk}\label{LambdaParameterLimit}
With the notations of Remark \ref{RmkChebyshev} above, we should have
\begin{align*}
 \lim_{(\lambda_1,\dots,\lambda_n)\to (0,\dots,0)}\frac{\det\left(\psi_{\lambda_i}(y_j)\right)_{i,j=1}^n}{\det\left(\psi_{\lambda_i}(x_j)\right)_{i,j=1}^n}=\frac{\det\left(h_i(y_j)\right)_{i,j=1}^n}{\det\left(h_i(x_j)\right)_{i,j=1}^n}.
\end{align*}
We do not attempt to justify this limit in the general setting of this paper. Nevertheless, for the $\textnormal{BESQ}(\delta)$ case it can be checked directly using the explicit formula for $\phi_{\frac{\mu}{2}}^{(\nu)}(x)$, in which case we get (as expected) the ratio of Vandermonde determinants $\frac{\mathsf{\Delta}_n(y)}{\mathsf{\Delta}_n(x)}$.
\end{rmk}

We now define the following Markov kernels $\mathfrak{L}_{2n,2n-1}$ and $\mathfrak{L}_{2n+1,2n}$ from $\mathbb{W}_n$ to $\mathbb{W}_n$ and from $\mathbb{W}_{n+1}$ to $\mathbb{W}_n$ respectively:
\begin{align}
 \mathfrak{L}_{2n,2n-1}(y,dx)&=\frac{1}{\hat{\mathfrak{h}}^{(n)}(y)}\Lambda_{2n,2n-1}(y,dx)\mathfrak{h}^{(n)}(x)\label{MarkovDegenerate1},\\
 \mathfrak{L}_{2n+1,2n}(y,dx)&=\frac{1}{\mathfrak{h}^{(n+1)}(y)}\Lambda_{2n+1,2n}(y,dx)\hat{\mathfrak{h}}^{(n)}(x).\label{MarkovDegenerate2}
\end{align}
We also define the Karlin-McGregor semigroups, see \cite{KarlinMcGregor,Karlin,ItoMckean}, $\mathcal{P}_t^n, \hat{\mathcal{P}}_t^n$ associated to $n$ independent $\mathsf{L}$ and $\widehat{\mathsf{L}}$ (killed when they hit $0$) diffusions in $\mathbb{W}_n$  respectively with transition densities given by
\begin{align*}
\mathcal{P}_t^n\left(x,y\right)=\det\left(p_t(x_i,y_j)\right)_{i,j=1}^n, \ \ \ \ 
\hat{\mathcal{P}}_t^n\left(x,y\right)=\det\left(\hat{p}_t(x_i,y_j)\right)_{i,j=1}^n.
\end{align*}
As in the previous section, from equation (26) in \cite{InterlacingDiffusions} or simply by taking the, non-singular in this case, limit $\lambda_{n+1}\to 0$ in (\ref{Inter1}) and (\ref{Inter2}) we get the following intertwining relations
\begin{align*}
\hat{\mathcal{P}}_t^n\Lambda_{2n,2n-1}&=\Lambda_{2n,2n-1} \mathcal{P}_t^n,\\
\mathcal{P}_t^{n+1}\Lambda_{2n+1,2n}&=\Lambda_{2n+1,2n} \hat{\mathcal{P}}_t^n.
\end{align*}
Finally, by induction using the intertwining relations above we get that $\mathfrak{h}^{(n)}$ and $\hat{\mathfrak{h}}^{(n)}$ are strictly positive in $\mathbb{W}_n$ harmonic (or invariant) functions of $\mathcal{P}_t^n$ and $\hat{\mathcal{P}}_t^n$ respectively. Thus, we can correctly define the Doob $h$-transformed Karlin-McGregor semigroups in $\mathbb{W}_n$ with transition kernels given by
\begin{align*}
\mathcal{P}_t^{n,\mathfrak{h}^{(n)}}\left(x,dy\right)&=\frac{\mathfrak{h}^{(n)}(y)}{\mathfrak{h}^{(n)}(x)}\det\left(p_t(x_i,y_j)\right)_{i,j=1}^ndy_1\cdots dy_n,\\
\hat{\mathcal{P}}_t^{n,\hat{\mathfrak{h}}^{(n)}}\left(x,dy\right)&=\frac{\hat{\mathfrak{h}}^{(n)}(y)}{\hat{\mathfrak{h}}^{(n)}(x)}\det\left(\hat{p}_t(x_i,y_j)\right)_{i,j=1}^ndy_1\cdots dy_n.
\end{align*}

\begin{rmk}
One might expect that $\mathfrak{h}^{(N)}(x)=\det\left(h_i(x_j)\right)_{i,j=1}^N$ appears in the large time $t$ asymptotics of $\mathbb{P}_x\left(\tau_C>t\right)$ for the first collision time $\tau_C$ of $N$ independent $\mathsf{L}$-diffusions starting from $x\in \mathbb{W}_N$. Then, the semigroup $\mathcal{P}_t^{N,\mathfrak{h}^{(N)}}$ would correspond to $N$ independent $\mathsf{L}$-diffusions conditioned to never intersect. It is well-known, see \cite{OConnell}, that for $\textnormal{BESQ}(\delta)$ this is indeed the case. It would be interesting to investigate if and under which conditions this is true in the general setting.
\end{rmk} 

We then have the following analogues of Propositions \ref{TwoLevel1} and \ref{TwoLevel2}.

\begin{prop}\label{TwolevelDegen1} Let $\mathsf{L}$ be a one-dimensional diffusion process generator satisfying (\ref{smoothness}) in $(0,\infty)$ so that $0$ is an entrance boundary point while $\infty$ is a natural boundary point. Assume that $\mathsf{L}$ and $\widehat{\mathsf{L}}$ satisfy the (\textbf{YW}) condition.

Consider a two-level process $\left(\left(\mathsf{X}(t),\mathsf{Y}(t)\right);t\ge 0\right)$ in $\tilde{\mathbb{W}}_{N,N+1}$ with the $\mathsf{X}$-process evolving as $N$ non-intersecting paths with transition semigroup $\hat{\mathcal{P}}_t^{N,\hat{\mathfrak{h}}^{(N)}}$ and $\mathsf{Y}$ as a system of $N+1$ $\mathsf{L}$-diffusions reflected off $\mathsf{X}$ in $\tilde{\mathbb{W}}_{N,N+1}$. 

Let $\mathcal{M}(dy)$ be a probability measure on $\mathbb{W}_{N+1}$ and assume that $\left(\left(\mathsf{X}(t),\mathsf{Y}(t)\right);t\ge 0\right)$ is initialized according to $\mathcal{M}(dy)\mathfrak{L}_{2N+1,2N}(y,dx)$ in $\mathbb{W}_{N,N+1}$. Then, $\left(\mathsf{Y}(t);t\ge 0\right)$ is distributed as a diffusion process with transition semigroup $\mathcal{P}_t^{N+1,\mathfrak{h}^{(N+1)}}$ starting from $\mathcal{M}$. In particular, it consists of non-intersecting paths, namely for all $t\ge 0$: $\mathsf{Y}(t)\in \mathbb{W}_{N+1}$ and so $\left(\mathsf{X}(t),\mathsf{Y}(t)\right)\in \mathbb{W}_{N,N+1}$ . Moreover, for any $T\ge 0$ the distribution of $\left(\mathsf{X}(T),\mathsf{Y}(T)\right)$  in $\mathbb{W}_{N,N+1}$ is given by $\left[\mathcal{M}\mathcal{P}_T^{N+1,\mathfrak{h}^{(N+1)}}\right](dy)\mathfrak{L}_{2N+1,2N}\left(y,dx\right)$.
\end{prop}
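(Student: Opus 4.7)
The plan is to mimic the proof of Proposition \ref{TwoLevel1}, replacing the strictly positive eigenfunction $\widetilde{\mathsf{\Psi}}^{(N)}$ of $\hat{\mathcal{P}}_t^N$ (with eigenvalue $e^{t\sum_i(\lambda_i-\lambda_{N+1})}$) by the strictly positive \emph{invariant} function $\hat{\mathfrak{h}}^{(N)}$ for $\hat{\mathcal{P}}_t^N$, and replacing $\mathsf{L}^{\psi_{\lambda_{N+1}}}$ by $\mathsf{L}$. Everything else in the two-level construction of \cite{InterlacingDiffusions} is structural and formally identical.

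First I would verify the structural inputs needed to quote Theorem 2.19, Proposition 2.17 and Corollary 2.18 of \cite{InterlacingDiffusions}. The regularity assumption is (\ref{smoothness}); the boundary hypothesis ($0$ entrance, $\infty$ natural) is explicit, and the (\textbf{YW}) condition for the $\mathsf{L}$ and $\widehat{\mathsf{L}}$ SDEs is part of the statement, guaranteeing pathwise existence and uniqueness for the reflected system in $\tilde{\mathbb{W}}_{N,N+1}$ as in Section 5 of \cite{InterlacingDiffusions}. Next I would check that $\hat{\mathfrak{h}}^{(N)}$ is a strictly positive invariant function for $\hat{\mathcal{P}}_t^N$, which amounts to showing the intertwining $\hat{\mathcal{P}}_t^N \Lambda_{2N,2N-1} = \Lambda_{2N,2N-1}\mathcal{P}_t^N$ (available from equation (26) of \cite{InterlacingDiffusions} as noted in the excerpt, now taken in the $\lambda\to 0$ limit), plus the analogous relation $\mathcal{P}_t^{n+1}\Lambda_{2n+1,2n}=\Lambda_{2n+1,2n}\hat{\mathcal{P}}_t^n$; composing these along the telescoping definition of $\hat{\mathfrak{h}}^{(N)}$ and using that $\mathcal{P}_t^1=\mathsf{L}$'s semigroup preserves the constant $1$ gives $\hat{\mathcal{P}}_t^N\hat{\mathfrak{h}}^{(N)}=\hat{\mathfrak{h}}^{(N)}$. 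Strict positivity on $\mathbb{W}_N$ follows by induction on $N$ from the strict positivity of $\mathfrak{m}$ and $\hat{\mathfrak{m}}$ on $(0,\infty)$.

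With these ingredients in place, I would apply the two-level coupling theorem of \cite{InterlacingDiffusions} taking its diffusion $L$ to be our $\mathsf{L}$ and its eigenfunction $\hat{h}_N$ to be our $\hat{\mathfrak{h}}^{(N)}$ (here with eigenvalue $1$, i.e.\ invariant). This immediately yields: (i) the conditional law of $\mathsf{X}(T)$ given $\mathsf{Y}(T)$ at any fixed time $T$ equals $\mathfrak{L}_{2N+1,2N}(\mathsf{Y}(T),\,\cdot\,)$, and (ii) the marginal law of $(\mathsf{Y}(t);t\ge 0)$ is that of a diffusion whose transition semigroup is the Doob $h$-transform of the independent $\mathsf{L}$-diffusion semigroup by the $\Lambda$-lifted eigenfunction. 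Computing that lift via the identity
\begin{align*}
\left[\Lambda_{2N+1,2N}\hat{\mathfrak{h}}^{(N)}\right](y)=\left[\Lambda_{2N+1,2N}\Lambda_{2N,2N-1}\cdots\Lambda_{2,1}1\right](y)=\mathfrak{h}^{(N+1)}(y),
\end{align*}
directly from the definition of $\mathfrak{h}^{(N+1)}$, the Doob transform is precisely by $\mathfrak{h}^{(N+1)}$. Hence the $\mathsf{Y}$-marginal evolves as $N+1$ non-intersecting $\mathsf{L}$-diffusions conditioned by $\mathfrak{h}^{(N+1)}$, i.e.\ under $\mathcal{P}_t^{N+1,\mathfrak{h}^{(N+1)}}$, starting from $\mathcal{M}$. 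Combining (i) and (ii) gives the claimed form of the joint distribution at time $T$.

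The main obstacle is really just the bookkeeping: verifying cleanly that the $\lambda\to 0$ ``degeneration'' of the framework of Section \ref{SectionInteractingDiffusions} retains all the algebraic ingredients, in particular that $\hat{\mathfrak{h}}^{(N)}$ plays the exact role of $\widetilde{\mathsf{\Psi}}^{(N)}$ and that the intertwining $\left[\Lambda_{2N+1,2N}\hat{\mathfrak{h}}^{(N)}\right]=\mathfrak{h}^{(N+1)}$ (the degenerate counterpart of (\ref{IntRel2})) holds on the nose. Because in this case the relevant functions are defined directly through the telescoping kernels rather than through explicit eigenfunction formulae, no singular limit needs to be justified; the identity is a tautology from the definition, and the rest is a direct application of the abstract two-level result.
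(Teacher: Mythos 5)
Your proposal is correct and takes essentially the same route as the paper: the paper's proof is a one-line citation to the two-level coupling results of Section 2.4 of \cite{InterlacingDiffusions}, taking the $L$-diffusion there to be $\mathsf{L}$ and the strictly positive (now invariant, eigenvalue $1$) function $\hat{h}_N$ there to be $\hat{\mathfrak{h}}^{(N)}$. Your elaboration of the required inputs — positivity and $\hat{\mathcal{P}}_t^N$-invariance of $\hat{\mathfrak{h}}^{(N)}$ via the intertwinings and conservativity of $\mathcal{P}_t^1$, plus the tautological lift $\Lambda_{2N+1,2N}\hat{\mathfrak{h}}^{(N)}=\mathfrak{h}^{(N+1)}$ — simply spells out what makes that citation legitimate, exactly as the paper's setup intends.
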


\begin{proof}
We apply the results of Section 2.4 of \cite{InterlacingDiffusions} with the $L$-diffusion therein being our $\mathsf{L}$-diffusion and the strictly positive eigenfunction (in this case harmonic) $\hat{h}_N$ therein given by our $\hat{\mathfrak{h}}^{(N)}$.
\end{proof}

\begin{prop}\label{TwolevelDegen2} Let $\mathsf{L}$ be a one-dimensional diffusion process generator satisfying (\ref{smoothness}) in $(0,\infty)$ so that $0$ is an entrance boundary point while $\infty$ is a natural boundary point. Assume that $\mathsf{L}$ and $\widehat{\mathsf{L}}$ satisfy the (\textbf{YW}) condition.

Consider a two-level process $\left(\left(\mathsf{X}(t),\mathsf{Y}(t)\right);t\ge 0\right)$ in $\tilde{\mathbb{W}}_{N,N}$ with the $\mathsf{X}$-process evolving as $N$ non-intersecting paths with transition semigroup $\mathcal{P}_t^{N,\mathfrak{h}^{(N)}}$ and $\mathsf{Y}$ as a system of $N$ $\widehat{\mathsf{L}}$-diffusions reflected off $\mathsf{X}$ in $\tilde{\mathbb{W}}_{N,N}$. 

Let $\mathcal{M}(dy)$ be a probability measure  on $\mathbb{W}_{N}$ and assume that $\left(\left(\mathsf{X}(t),\mathsf{Y}(t)\right);t\ge 0\right)$ is initialized according to $\mathcal{M}(dy)\mathfrak{L}_{2N,2N-1}(y,dx)$ in $\mathbb{W}_{N,N}$. Then, $\left(\mathsf{Y}(t);t\ge 0\right)$ is distributed as a diffusion process with semigroup $\hat{\mathcal{P}}_t^{N,\hat{\mathfrak{h}}^{(N)}}$ starting from $\mathcal{M}$. In particular, it consists of non-intersecting paths, namely for all $t\ge 0$: $\mathsf{Y}(t)\in \mathbb{W}_{N}$ and so $\left(\mathsf{X}(t),\mathsf{Y}(t)\right)\in \mathbb{W}_{N,N}$ . Moreover, for any $T\ge 0$ the distribution of $\left(\mathsf{X}(T),\mathsf{Y}(T)\right)$  in $\mathbb{W}_{N,N}$ is given by $\left[\mathcal{M}\hat{\mathcal{P}}_T^{N,\hat{\mathfrak{h}}^{(N)}}\right](dy)\mathfrak{L}_{2N,2N-1}\left(y,dx\right)$

\end{prop}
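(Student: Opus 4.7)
The plan is to follow the blueprint of the proof of Proposition \ref{TwoLevel2}, transposed to the degenerate setting where the eigenfunctions $\psi_{\lambda_i}$ are replaced by harmonic functions (formally this corresponds to the $\lambda_i\to 0$ limit, although one prefers to verify everything directly rather than argue by continuity, cf.\ Remark \ref{LambdaParameterLimit}). The main move is to invoke the two-level framework developed in Section 2.4 of \cite{InterlacingDiffusions} under the following dictionary: the $L$-diffusion of \cite{InterlacingDiffusions} is taken to be our $\widehat{\mathsf{L}}$-diffusion, so that its dual coincides with our $\mathsf{L}$-diffusion, and the strictly positive harmonic function (denoted $\hat{h}_N$ in \cite{InterlacingDiffusions}) for the Karlin--McGregor sub-Markov semigroup $\mathcal{P}_t^N$ of $N$ independent $\mathsf{L}$-diffusions killed on intersection is supplied by our $\mathfrak{h}^{(N)}$.

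Verifying the input hypotheses is essentially immediate. The regularity requirement is given by (\ref{smoothness}). For the boundary behaviour, $\infty$ is natural for both $\mathsf{L}$ and $\widehat{\mathsf{L}}$, and although $0$ is exit (rather than entrance or natural) for $\widehat{\mathsf{L}}$, this causes no difficulty since the smallest $\mathsf{Y}$-coordinate is kept away from $0$ by the continuous barrier $\mathsf{X}_1$, exactly as in the general reflection setup in $\tilde{\mathbb{W}}_{N,N}$ from Section \ref{SectionInteractingDiffusions}. The (\textbf{YW}) assumption on $\mathsf{L}$ and $\widehat{\mathsf{L}}$ yields well-posedness of the reflecting SDEs defining the two-level dynamics. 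Finally, that $\mathfrak{h}^{(N)}$ is a strictly positive harmonic function of $\mathcal{P}_t^N$ on $\mathbb{W}_N$ has already been established above, by induction on $N$ starting from $\mathfrak{h}^{(1)}\equiv 1$ (which is harmonic for $p_t$ because $0$ is entrance for $\mathsf{L}$), using the intertwinings $\hat{\mathcal{P}}_t^n\Lambda_{2n,2n-1}=\Lambda_{2n,2n-1}\mathcal{P}_t^n$ and $\mathcal{P}_t^{n+1}\Lambda_{2n+1,2n}=\Lambda_{2n+1,2n}\hat{\mathcal{P}}_t^n$ together with the recursive definitions of $\mathfrak{h}^{(n)}$ and $\hat{\mathfrak{h}}^{(n)}$.

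With all inputs verified, the theorem of Section 2.4 of \cite{InterlacingDiffusions} directly delivers the three assertions: Markovianity of $\left(\mathsf{Y}(t);t\ge 0\right)$ with semigroup $\hat{\mathcal{P}}_t^{N,\hat{\mathfrak{h}}^{(N)}}$, the non-intersection property $\mathsf{Y}(t)\in \mathbb{W}_N$ for all $t$, and the propagation of the Gibbs coupling $\mathfrak{L}_{2N,2N-1}$ between $\mathsf{Y}(T)$ and $\mathsf{X}(T)$ at every fixed $T\ge 0$. Unlike in Proposition \ref{TwoLevel2}, no analogue of Lemma \ref{SemigroupObs} is needed here, since in the degenerate case the conditioned semigroup $\hat{\mathcal{P}}_t^{N,\hat{\mathfrak{h}}^{(N)}}$ is itself the natural output of the Doob $h$-transform and requires no further identification. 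In summary, there is no genuine obstacle: the only substantive point is the harmonicity of $\mathfrak{h}^{(N)}$, which has already been secured by the developments preceding the statement of the proposition, and the rest is a matter of matching notation with \cite{InterlacingDiffusions}.
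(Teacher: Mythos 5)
Your proof is correct and follows the same route as the paper: both apply the two-level results of Section 2.4 of \cite{InterlacingDiffusions} with the $L$-diffusion there taken to be $\widehat{\mathsf{L}}$ and the strictly positive harmonic function $\hat{h}_N$ identified with $\mathfrak{h}^{(N)}$. The paper's own proof is a one-line citation, so the hypothesis-verification you spell out (regularity, boundary behaviour, (\textbf{YW}), and harmonicity of $\mathfrak{h}^{(N)}$) is a faithful elaboration of the same argument, and you are right that no analogue of Lemma \ref{SemigroupObs} is required in the degenerate case.
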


\begin{proof}
We apply the results of Section 2.4 of \cite{InterlacingDiffusions} with the $L$-diffusion therein being our $\widehat{\mathsf{L}}$-diffusion and the strictly positive eigenfunction (in this case harmonic) $\hat{h}_N$ therein given by our $\mathfrak{h}^{(N)}$.
\end{proof}

Finally, we define a Gibbs measure $\mathfrak{GM}^{\mathcal{M}}_N$ as in Definition \ref{DefinitionGibbs} but with the Markov kernels $\mathfrak{L}_{i+1,i}$ given by (\ref{MarkovDegenerate1}), (\ref{MarkovDegenerate2}). We have the following analogue of Proposition \ref{GeneralInteracting}.

\begin{prop}
Let $\mathsf{L}$ be a one-dimensional diffusion process generator satisfying (\ref{smoothness}) in $(0,\infty)$ so that $0$ is an entrance boundary point while $\infty$ is a natural boundary point. Assume that $\mathsf{L}$ and $\widehat{\mathsf{L}}$ satisfy the (\textbf{YW}) condition.

Let $N\ge 1$ and consider a probability measure $\mathcal{M}$ on $\mathbb{W}_N$. Suppose that the dynamics (\ref{DegenerateDynamics}) of the process $\left(\left(\mathsf{x}^{(1)}(t),\dots,\mathsf{x}^{(2N-1)}(t)\right);t\ge 0\right)$ in $\mathfrak{HA}_{2N-1}$ are initialized according to the Gibbs measure $\mathfrak{GM}_{2N-1}^{\mathcal{M}}$. 

Then, the projection on an odd level of the array $\left(\mathsf{x}^{(2n-1)}(t);t\ge 0\right)$ evolves according to $\mathcal{P}_t^{n,\mathfrak{h}^{(n)}}$ while on an even level $\left(\mathsf{x}^{(2n)}(t);t\ge 0\right)$ according to $\hat{\mathcal{P}}_t^{n,\hat{\mathfrak{h}}^{(n)}}$, in particular $\tau_{\mathfrak{HA}_{2N-1}}=\infty$. Finally, for a fixed time $T\ge 0$ the distribution of $\left(\mathsf{x}^{(1)}(T),\dots,\mathsf{x}^{(2N-1)}(T)\right)$ is given by the evolved Gibbs measure $\mathfrak{GM}_{2N-1}^{\mathcal{M}\mathcal{P}_T^{N,\mathfrak{h}^{(N)}}}$.

\end{prop}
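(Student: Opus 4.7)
The plan is to follow the same inductive strategy the author used for Proposition \ref{GeneralInteracting}, simply substituting the degenerate two-level results (Propositions \ref{TwolevelDegen1} and \ref{TwolevelDegen2}) in place of their non-degenerate counterparts. The induction is on the length $2N-1$ of the interlacing array, and it alternates between the two propositions. The base case $N=1$ is handled by Proposition \ref{TwolevelDegen2}, applied to the pair $(\mathsf{x}^{(1)}, \mathsf{x}^{(2)})$ with initial law $\mathfrak{GM}_2^{\mathcal{M}}$.

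For the inductive step I consider the projection $\Pi_{2N-2}^{2N-1}:\mathfrak{HA}_{2N-1}\to \mathfrak{HA}_{2N-2}$ which drops the top row. The key consistency property is that the pushforward of the Gibbs measure under this map is again a Gibbs measure,
\begin{align*}
\left(\Pi_{2N-2}^{2N-1}\right)_*\mathfrak{GM}^{\mathcal{M}}_{2N-1}=\mathfrak{GM}_{2N-2}^{\mathcal{M}\mathfrak{L}_{2N-1,2N-2}},
\end{align*}
which is immediate from the factorised definition of $\mathfrak{GM}$ in terms of the Markov kernels $\mathfrak{L}_{i+1,i}$ given in (\ref{MarkovDegenerate1})--(\ref{MarkovDegenerate2}). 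Since the dynamics (\ref{DegenerateDynamics}) are set up so that the top row $\mathsf{x}^{(2N-1)}$ reflects off $\mathsf{x}^{(2N-2)}$ without influencing it, the subprocess $\left(\mathsf{x}^{(1)},\dots,\mathsf{x}^{(2N-2)}\right)$ is autonomous, and the inductive hypothesis applies to it: its top row $\mathsf{x}^{(2N-2)}$ then evolves according to $\hat{\mathcal{P}}_t^{N-1,\hat{\mathfrak{h}}^{(N-1)}}$, and at each fixed time $T\ge 0$ the joint distribution is of Gibbs form.

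I would then apply Proposition \ref{TwolevelDegen1} to the pair $(\mathsf{x}^{(2N-2)}, \mathsf{x}^{(2N-1)})$. Its hypotheses are satisfied because $\mathsf{x}^{(2N-1)}$ consists of $N$ $\mathsf{L}$-diffusions reflected off $\mathsf{x}^{(2N-2)}$ in $\tilde{\mathbb{W}}_{N-1,N}$, the driving process $\mathsf{x}^{(2N-2)}$ has the required law by the inductive hypothesis, and the initial joint law is of the required conditioned form $\left[\mathcal{M}\mathfrak{L}_{2N-1,2N-2}\right](dy)\mathfrak{L}_{2N-1,2N-2}(y,dx)$-style factorisation as encoded in $\mathfrak{GM}_{2N-1}^{\mathcal{M}}$. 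The proposition then delivers all remaining claims: that $\mathsf{x}^{(2N-1)}$ is Markovian with transition semigroup $\mathcal{P}_t^{N,\mathfrak{h}^{(N)}}$, that it never leaves $\mathbb{W}_N$ (giving in particular $\tau_{\mathfrak{HA}_{2N-1}}=\infty$), and that for each $T\ge 0$ the joint distribution of $(\mathsf{x}^{(2N-2)}(T),\mathsf{x}^{(2N-1)}(T))$ factorises through $\mathfrak{L}_{2N-1,2N-2}$. Combining this with the inductive statement about the lower levels produces the evolved Gibbs measure $\mathfrak{GM}_{2N-1}^{\mathcal{M}\mathcal{P}_T^{N,\mathfrak{h}^{(N)}}}$.

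There is really no hard step: the entire argument is a faithful transcription of the proof of Proposition \ref{GeneralInteracting}, and the only delicacy, as in the non-degenerate case, is the bookkeeping of the conditional independence structure, namely that $\mathsf{x}^{(2N-1)}$ is conditionally independent of $\left(\mathsf{x}^{(1)},\dots,\mathsf{x}^{(2N-3)}\right)$ given $\mathsf{x}^{(2N-2)}$. This property is forced by the form of the SDEs (\ref{DegenerateDynamics}) which couple $\mathsf{x}^{(2N-1)}$ only to $\mathsf{x}^{(2N-2)}$ via the local time terms, and it is what legitimises pasting the inductive result for the lower levels with the two-level result at the top. The same overall strategy has appeared in \cite{Warren,WarrenWindridge,Toda,InterlacingDiffusions,Randomgrowth,Sun,Cerenzia}, so beyond carefully verifying the hypotheses of Propositions \ref{TwolevelDegen1} and \ref{TwolevelDegen2} at each step no new ingredients are needed.
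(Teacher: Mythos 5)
Your proof is correct and follows exactly the approach the paper itself takes: the paper's proof is a one-line reference to the argument of Proposition \ref{GeneralInteracting}, invoking Propositions \ref{TwolevelDegen1} and \ref{TwolevelDegen2} in place of Propositions \ref{TwoLevel1} and \ref{TwoLevel2}, and you have spelled out the same alternating induction with the same use of the Gibbs consistency of $\mathfrak{GM}$ and the conditional independence structure.
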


\begin{proof}
Same as Proposition \ref{GeneralInteracting} by recursive use of Proposition \ref{TwolevelDegen1} and Proposition \ref{TwolevelDegen2}.
\end{proof}

\subsection{On entrance laws}\label{SectionEntrance}
As alluded to in Remark \ref{RemarkEntrance}, we would also like to start the dynamics (\ref{Dynamics}) from a singular point, when some or all of the coordinates of the top level coincide, using an entrance law, see Chapter XII in \cite{RevuzYor} for more details regarding this notion. For simplicity we will only consider the case of all coordinates coinciding. Our main interest is to start the top process from the origin, in which case, due to the interlacing, all entries of the array $\mathfrak{HA}_{2N-1}$ are equal to zero.

Towards this end we first let $x_*\in [0,\infty)$. Our main assumption in this section is the following:
\begin{itemize}
    \item We assume, and denote this condition by (\textbf{E}), that it is possible to start the diffusion process associated to $\mathfrak{P}_t^{(N),(\lambda_1,\dots,\lambda_N)}$ from the singular point $(x_*,\dots,x_*)$, namely that the following probability measures are well-defined
\begin{align*}
    \mu_t^{x_*}(dy)=\mu_t^{x_*,(N),(\lambda_1,\dots,\lambda_N)}(dy)\overset{\textnormal{def}}{=}\lim_{(x_1,\dots,x_N)\to(x_*,\dots,x_*)} \mathfrak{P}_t^{(N),(\lambda_1,\dots,\lambda_N)}(x,dy), \ \ t>0,
\end{align*}
and\footnote{Observe that modulo some domination this requirement is simply a consequence of the semigroup property $\mathfrak{P}_t^{(N),(\lambda_1,\dots,\lambda_N)}\mathfrak{P}_s^{(N),(\lambda_1,\dots,\lambda_N)}=\mathfrak{P}_{t+s}^{(N),(\lambda_1,\dots,\lambda_N)}$.} that they form an entrance law: for any $t>0$ and $s\ge 0$ we have $ \mu_t^{x_*}\mathfrak{P}_s^{(N),(\lambda_1,\dots,\lambda_N)}= \mu_{t+s}^{x_*}$. 
\end{itemize}

We do not attempt to prove this here in complete generality but we will nevertheless justify it in Lemma \ref{AssumptionE} below for the $\textnormal{BESQ}(\delta)$ case and $x_*=0$ which is our main interest. Under the assumption (\textbf{E}) above, we note that by L'H\^{o}pital's rule, $\mu_t^{x_*}$ is given by 
the explicit formula, for $t>0$:
\begin{align}\label{EntranceDensity}
    \mu_t^{x_*}(dy)=e^{-\sum_{i=1}^N\lambda_i t}\frac{\det\left(\partial^{i-1}_{x}p_t(x,y_j)|_{x=x_*}\right)_{i,j=1}^N}{\det\left(\partial^{i-1}_{x}\psi_{\lambda_j}(x)|_{x=x_*}\right)_{i,j=1}^N}\det\left(\psi_{\lambda_i}(y_j)\right)_{i,j=1}^Ndy.
\end{align}

Finally observe that, for any $t>0$, $\mu_t^{x_*}$ does not charge the boundary of $\mathbb{W}_N$ and we can thus define the Gibbs measure $\mathfrak{CM}_{2N-1}^{\mu_t^{x_*}}$ on $\mathfrak{HA}_{2N-1}$. We have the following result.

\begin{prop}\label{EntranceLawProp}
In the setting of Proposition \ref{GeneralInteracting}, under assumption  (\textbf{E}) above,  $\left(\mathfrak{CM}_{2N-1}^{\mu_t^{x_*}}\right)_{t>0}$ forms an entrance law for the dynamics (\ref{Dynamics}).
\end{prop}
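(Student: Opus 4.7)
The plan is that this proposition falls out almost immediately from the combination of Proposition \ref{GeneralInteracting} (which says Gibbs initial conditions propagate into Gibbs distributions under the dynamics) with the entrance law property of $\mu_t^{x_*}$ built into assumption (\textbf{E}). There is essentially no new work beyond carefully applying what is already established.

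First I would verify that the Gibbs measure $\mathfrak{GM}_{2N-1}^{\mu_t^{x_*}}$ is well-defined for each $t>0$. By the explicit formula (\ref{EntranceDensity}) combined with the strict positivity of $\det\left(\psi_{\lambda_i}(y_j)\right)_{i,j=1}^N$ on $\mathbb{W}_N$ (Lemma \ref{StrictPosLem}), the measure $\mu_t^{x_*}$ does not charge $\partial\mathbb{W}_N$, so it is genuinely a probability measure on $\mathbb{W}_N$ and Definition \ref{DefinitionGibbs} applies.

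Second, I would fix $t>0$ arbitrary and invoke Proposition \ref{GeneralInteracting} with $\mathcal{M}=\mu_t^{x_*}$. This yields two things at once: the dynamics (\ref{Dynamics}) initialized from $\mathfrak{GM}_{2N-1}^{\mu_t^{x_*}}$ have a unique strong solution with $\tau_{\mathfrak{HA}_{2N-1}}=\infty$; and at every time $s\ge 0$ the distribution of $(\mathsf{x}^{(1)}(s),\dots,\mathsf{x}^{(2N-1)}(s))$ equals
\begin{align*}
\mathfrak{GM}_{2N-1}^{\mu_t^{x_*}\,\mathfrak{P}_s^{(N),(\lambda_1,\dots,\lambda_N)}}.
\end{align*}

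Third, by the entrance law property contained in assumption (\textbf{E}), we have $\mu_t^{x_*}\mathfrak{P}_s^{(N),(\lambda_1,\dots,\lambda_N)}=\mu_{t+s}^{x_*}$, so the displayed measure above is exactly $\mathfrak{GM}_{2N-1}^{\mu_{t+s}^{x_*}}$. Letting $\mathcal{Q}_s$ denote the transition semigroup of the Markov process determined by (\ref{Dynamics}) (well-defined on Gibbs initial conditions by what was just established), this says precisely
\begin{align*}
\mathfrak{GM}_{2N-1}^{\mu_t^{x_*}}\mathcal{Q}_s=\mathfrak{GM}_{2N-1}^{\mu_{t+s}^{x_*}},\qquad\forall\,t>0,\ s\ge 0,
\end{align*}
which is the defining property of an entrance law. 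The only conceivable subtlety is the very mild measurability/dominated-convergence point needed to identify $\mu_t^{x_*}\mathfrak{P}_s^{(N),(\lambda_1,\dots,\lambda_N)}$ as $\mu_{t+s}^{x_*}$ when $\mu_t^{x_*}$ is itself defined as a limit of $\mathfrak{P}_t^{(N),(\lambda_1,\dots,\lambda_N)}(x,\cdot)$ as $x\to(x_*,\dots,x_*)$; but this is built into the statement of (\textbf{E}) and so requires no argument here. In short, the entrance law at the top level simply lifts to an entrance law of the full array through the Gibbs construction.
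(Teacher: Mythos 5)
Your proposal is correct and follows essentially the same route as the paper's own proof: fix $t>0$, observe that $\mu_t^{x_*}$ does not charge $\partial\mathbb{W}_N$ so the Gibbs measure is well-defined, apply Proposition \ref{GeneralInteracting} with $\mathcal{M}=\mu_t^{x_*}$, and then use the entrance-law identity $\mu_t^{x_*}\mathfrak{P}_s^{(N),(\lambda_1,\dots,\lambda_N)}=\mu_{t+s}^{x_*}$ from assumption (\textbf{E}) to conclude. The only cosmetic point is that the non-charging of the boundary follows simply because $\mu_t^{x_*}$ has a Lebesgue density via (\ref{EntranceDensity}) (rather than from the strict positivity statement of Lemma \ref{StrictPosLem}), but this does not affect the validity of the argument.
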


\begin{proof}
Let $t>0$. We need to show that if the dynamics (\ref{Dynamics}) are initialized according to $\mathfrak{CM}_{2N-1}^{\mu_t^{x_*}}$ and then run for any time $s$ the resulting distribution of the array is given by $\mathfrak{CM}_{2N-1}^{\mu_{t+s}^{x_*}}$. Since, for any $t>0$, $\mu_t^{x_*}$ does not charge the boundary we can thus apply Proposition \ref{GeneralInteracting} from which the desired conclusion follows since $\mathfrak{CM}_{2N-1}^{\mu_{t}^{x_*}\mathfrak{P}_s^{(N),(\lambda_1,\dots,\lambda_N)}}=\mathfrak{CM}_{2N-1}^{\mu_{t+s}^{x_*}}$.
\end{proof}

\begin{rmk}
Analogous results can be obtained for the degenerate case of Section \ref{SectionDegenerate} by word for word adaptation. We omit the statement.
\end{rmk}

We briefly justify assumption (\textbf{E}) for our main case of interest.

\begin{lem}\label{AssumptionE}
Assumption (\textbf{E}) holds, with $x_*=0$, in the $\textnormal{BESQ}(\delta)$ case.
\end{lem}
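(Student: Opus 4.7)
The plan is to exploit the fact that for $\textnormal{BESQ}(\delta)$ both the eigenfunction $\phi_\lambda^{(\nu)}$ and the transition density $q_t^{(\nu)}(\cdot,y)$ are real-analytic functions of $x$ on $[0,\infty)$. Expanding the modified Bessel function $I_\nu$ in its defining power series gives
\[
\phi_\lambda^{(\nu)}(x)=\sum_{k\ge 0}\frac{(\lambda x)^k}{k!\,\Gamma(k+\nu+1)\,2^{k+\nu}},\qquad q_t^{(\nu)}(x,y)=\frac{y^\nu e^{-(x+y)/(2t)}}{(2t)^{\nu+1}}\sum_{k\ge 0}\frac{(xy/(4t^2))^k}{k!\,\Gamma(k+\nu+1)},
\]
so both are jointly entire in $x$ (the singular factor $(y/x)^{\nu/2}$ cancels against $(\sqrt{xy}/t)^\nu$ in $I_\nu$). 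By antisymmetry under permutation of the $x_i$'s, the Vandermonde $\mathsf{\Delta}_N(x)$ divides both $\det(\phi_{\lambda_i}^{(\nu)}(x_j))_{i,j=1}^N$ and $\det(q_t^{(\nu)}(x_i,y_j))_{i,j=1}^N$ in the ring of analytic functions of $(x_1,\ldots,x_N)$.

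The first step is the pointwise limit. A Cauchy--Binet/Taylor expansion (the multivariate L'H\^{o}pital rule) gives, as $(x_1,\ldots,x_N)\to(0,\ldots,0)$,
\[
\frac{\det(\phi_{\lambda_i}^{(\nu)}(x_j))_{i,j=1}^N}{\mathsf{\Delta}_N(x)}\longrightarrow\frac{1}{\prod_{l=0}^{N-1}l!}\det\left(\partial_x^{j-1}\phi_{\lambda_i}^{(\nu)}(x)|_{x=0}\right)_{i,j=1}^N,
\]
and analogously for $\det(q_t^{(\nu)}(x_i,y_j))/\mathsf{\Delta}_N(x)$. Explicit evaluation $\partial_x^{j-1}\phi_{\lambda_i}^{(\nu)}(0)=\lambda_i^{j-1}/[\Gamma(j+\nu)\,2^{j-1+\nu}]$ shows the denominator limit equals a positive constant times $\mathsf{\Delta}_N(\lambda)>0$ (here the strict ordering $\lambda_1<\cdots<\lambda_N$ is essential). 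The Vandermondes in $x$ then cancel between numerator and denominator of $\mathfrak{P}_t^{(N),(\lambda_1,\ldots,\lambda_N)}(x,dy)$, yielding formula (\ref{EntranceDensity}) as a pointwise limit with a smooth, strictly positive density on $\mathbb{W}_N$.

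Next I would upgrade this pointwise convergence to $\mu_t^0$ being a probability measure and to the entrance-law property. For this I would apply dominated convergence. Since $\det(q_t^{(\nu)}(x_i,y_j))/\mathsf{\Delta}_N(x)$ is jointly analytic in $x$ and so uniformly bounded for $x$ in a small closed ball around $0$, and $\det(\phi_{\lambda_i}^{(\nu)}(x_j))/\mathsf{\Delta}_N(x)$ remains bounded below by a positive constant (its limit value), the density $d\mathfrak{P}_t^{(N),(\lambda_1,\ldots,\lambda_N)}(x,\cdot)/dy$ admits a uniform-in-$x$ dominating function of the form $C\,\det(\phi_{\lambda_i}^{(\nu)}(y_j))\prod_j y_j^\nu e^{-y_j/(2t)}P(y)$ for some polynomial $P$, which is $L^1(\mathbb{W}_N,dy)$ because the decay $e^{-\sum y_j/(2t)}$ beats the exponential growth $\phi_\lambda^{(\nu)}(y)\sim e^{\sqrt{2\lambda y}}/y^{\nu/2}$ of the Bessel eigenfunctions. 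Dominated convergence applied to $1=\mathfrak{P}_t^{(N),(\lambda_1,\ldots,\lambda_N)}(x,\mathbb{W}_N)$ yields $\mu_t^0(\mathbb{W}_N)=1$, and applied to Chapman--Kolmogorov $\mathfrak{P}_t^{(N),(\lambda_1,\ldots,\lambda_N)}(x,\cdot)\mathfrak{P}_s^{(N),(\lambda_1,\ldots,\lambda_N)}=\mathfrak{P}_{t+s}^{(N),(\lambda_1,\ldots,\lambda_N)}(x,\cdot)$ it gives the entrance-law identity $\mu_t^0\mathfrak{P}_s^{(N),(\lambda_1,\ldots,\lambda_N)}=\mu_{t+s}^0$.

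The main technical obstacle is constructing this uniform dominating function: while each ingredient is explicit and the ideas are standard, one must carefully track the interplay between the Bessel-type growth of $\phi_{\lambda_i}^{(\nu)}(y)$ for large $y$, the $e^{-y/(2t)}$ decay coming from $q_t^{(\nu)}$, and the small-$y$ behavior near the boundary of $\mathbb{W}_N$ (where the densities vanish but could \emph{a priori} concentrate before passage to the limit). Everything else in the argument is a formal consequence of the explicit power-series representations above.
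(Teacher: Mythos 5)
Your pointwise-limit step matches the paper's, but for the two harder claims, that $\mu_t^0$ is a probability measure and that $(\mu_t^0)_{t>0}$ is an entrance law, you propose a dominated-convergence argument whose dominating function you never actually construct. You yourself flag this as ``the main technical obstacle,'' so the proposal has a genuine gap precisely where the real work lies: the domination is \emph{not} ``a formal consequence of the explicit power-series representations,'' it requires quantitative two-sided estimates, uniform in $x$ near the origin, that control the competition between the exponential growth of the $\phi^{(\nu)}_{\mu_i/2}(y_j)$ and the Gaussian-type decay in $q_t^{(\nu)}$ while also handling the vanishing of the two Vandermonde-type determinants near the boundary of $\mathbb W_N$; none of this is supplied.

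The paper avoids domination entirely and takes a different route worth comparing. For $\mu_t^0$ being a probability measure it applies Scheff\'e's lemma, which only requires pointwise convergence of the densities together with verifying directly that the limiting expression integrates to $1$; the latter is obtained by the Andr\'eief identity
\[
\int_{\mathbb W_N}\det\bigl(\partial_x^{i-1}p_t(x,y_j)\bigr)\det\bigl(\psi_{\lambda_i}(y_j)\bigr)\,dy
=\det\Bigl(\partial_x^{i-1}e^{\lambda_j t}\psi_{\lambda_j}(x)\Bigr),
\]
with the one interchange of derivative and integral justified by iterating the simple relation $\partial_x q_t^{(\nu)}(x,y)=\tfrac{1}{2t}\bigl[q_t^{(\nu+1)}(x,y)-q_t^{(\nu)}(x,y)\bigr]$ and the elementary bound $I_\nu(x)\le \frac{x^\nu}{2^\nu\Gamma(\nu+1)}e^x$. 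This replaces your global dominating function with a local, derivative-level bound. For the entrance-law identity, instead of dominating Chapman--Kolmogorov, the paper peels off the drift factors and reduces the question to the drift-free Laguerre eigenvalue semigroup $q_t^{(\nu),N}$, invoking the fact (cited from Proposition 1.3 of \cite{Gateway}) that this is Feller on $\{x_1\le\cdots\le x_N\}$, so weak continuity in the initial condition yields the result. If you want to keep your dominated-convergence route, you must actually build and verify the dominating function, or else adopt the Scheff\'e/Andr\'eief and Feller-continuity arguments.
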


\begin{proof}
We first prove that $\mu_t^{0}$ is well-defined by an application of Scheff\'{e}'s lemma. Note that, the partial derivatives $x\mapsto \partial_x^iq^{(\nu)}_t(x,y)$ extend continuously to $x=0$; this can be seen from the exact expression (\ref{BesselIdentity}) below. Moreover, making use of the explicit formula
\begin{align*}
\phi_{\frac{\mu}{2}}^{(\nu)}(x)=(\mu x)^{-\frac{\nu}{2}}I_{\nu}\left(\sqrt{\mu x}\right)=\sum_{m=0}^\infty \frac{\mu^{m}}{m!\Gamma(m+\nu+1)2^{2m}}x^m,  
\end{align*}
we also obtain that the denominator in (\ref{EntranceDensity}) is non-vanishing
\begin{align*}
 \det \left(\partial^{i-1}_{x}\phi^{(\nu)}_{\frac{\mu_j}{2}}(x)\big|_{x=0}\right)_{i,j=1}^N=\det\left(\frac{\mu_j^{i-1}}{\Gamma(i+\nu)2^{2(i-1)+\nu}}\right)_{i,j=1}^N=\prod_{i=1}^N\frac{1}{\Gamma(i+\nu)2^{2(i-1)+\nu}} \mathsf{\Delta}_N(\mu)>0,
\end{align*}
and hence we have the convergence of the densities. While to show that $\mu_t^{x_*}$ is a probability measure we have the following generic computation using Andr\'{e}ief's identity \cite{Andreief} in the first equality:
\begin{align*}
 \int_{\mathbb{W}_N} \det\left(\partial^{i-1}_{x}p_t(x,y_j)\right)_{i,j=1}^N\det\left(\psi_{\lambda_i}(y_j)\right)_{i,j=1}^Ndy&=\det\left(\int_0^{\infty}\partial_x^{i-1}p_t(x,z)\psi_{\lambda_j}(z)dz\right)_{i,j=1}^N\\
 &=\det\left(\partial^{i-1}_x e^{\lambda_j t}\psi_{\lambda_j}(x)\right)_{i,j=1}^N,
\end{align*}
where in the last equality we have implicitly assumed some domination in order to bring the partial derivative outside the integral\footnote{Alternatively, one can check this one-dimensional identity directly in the $\textnormal{BESQ}(\delta)$ case using the explicit formulae.}. This interchange of derivative and integral can be justified in the $\textnormal{BESQ}(\delta)$ case by iterative use of the exact formula, obtained by direct computation,
\begin{align}\label{BesselIdentity}
\partial_x q_t^{(\nu)}(x,y)=\frac{1}{2t}\left[q_t^{(\nu+1)}(x,y)-q_t^{(\nu)}(x,y)\right],
\end{align}
along with the bound $I_\nu(x)\le \frac{x^\nu}{2^\nu \Gamma(\nu+1)}e^x$, valid for $x>0$ and $\nu>-\frac{1}{2}$, and hence $\mu_t^0$ is well-defined.

Now, for the entrance law property it is most convenient to relate this to the eigenvalue evolution of the Laguerre matrix diffusion with semigroup $\left(q_t^{(\nu),N};t\ge 0\right)$ from (\ref{LaguerreEvalSemi}) which is known to be Feller-Markov in $\{x=(x_1,\dots,x_N) \in [0,\infty)^N:x_1\le \dots \le x_N\}$, see Proposition 1.3 in \cite{Gateway} (and hence we have weak continuity in the initial condition, see for example Theorem 2.5 in Chapter 4 of \cite{EthierKurtz}). Towards this end, observe that using the semigroup property and by noting the explicit form (\ref{LaguerreEvalSemi}), (\ref{CondBesselDriftsTransDens}) of the transition kernels, we can write
\begin{align*}
q_{t+s}^{(\nu), N, \mu}(x,dy)=e^{-\frac{1}{2}\sum_{i=1}^N\mu_i(t+s)}\frac{\mathsf{\Delta}_N(x)\det\left(\phi^{(\nu)}_{\frac{\mu_i}{2}}(y_j)\right)_{i,j=1}^N}{\det\left(\phi^{(\nu)}_{\frac{\mu_i}{2}}(x_j)\right)_{i,j=1}^N\mathsf{\Delta}_N(y)}\int_{z\in \mathbb{W}_N}q_t^{(\nu),N}(x,z) q_s^{(\nu),N}(z,y)dzdy.
\end{align*}
Taking the limit $(x_1,\dots,x_N)\to (0,\dots,0)$ and after some rearranging of the right hand side gives the required equation.
\end{proof}

\subsection{Interacting particle system at the right edge}\label{SectionEdge}
As briefly discussed in Remark \ref{RemarkEdge}, we observe that the rightmost coordinates in the dynamics (\ref{Dynamics}) on the interlacing array form an autonomous interacting particle system. It is the analogue, in this setting, of Brownian motions with one-sided collisions (also called Brownian TASEP) which is equivalent to Brownian last passage percolation. The Brownian model has been extensively studied in the literature see for example \cite{Baryshnikov,OConnellYor,BougerolJeulin,BBO,Warren,ReflectedBrownianKPZ,NQR} and the references therein.

By simply relabelling the SDE (\ref{Dynamics}) for the rightmost coordinates we see that this interacting particle system with one-sided collisions is described by the SDE
\begin{align}\label{Edge1}
d\mathsf{x}_i(t)&=\sqrt{2\mathsf{a}\left(\mathsf{x}_i(t)\right)}d\mathsf{w}_i(t)+\mathsf{B}^{(i)}\left(\mathsf{x}_i(t)\right)dt+\frac{1}{2}d\mathfrak{l}_i(t),
\end{align}
where the $\mathsf{w}_i$ are independent standard Brownian motions, $\mathsf{B}^{(i)}$ is given by (\ref{FunctionB}) and as before $\mathfrak{l}_0\equiv 0$ and for $i>1$, $\mathfrak{l}_i$ is the semimartingale local time of $\mathsf{x}_i-\mathsf{x}_{i-1}$ at $0$. Similarly, in the degenerate case of Section \ref{SectionDegenerate} we have the SDE
\begin{align}\label{Edge2}
    d\mathsf{x}_i(t)&=\sqrt{2\mathsf{a}\left(\mathsf{x}_i(t)\right)}d\mathsf{w}_i(t)+\mathfrak{B}^{(i)}\left(\mathsf{x}_i(t)\right)dt+\frac{1}{2}d\mathfrak{l}_i(t),
\end{align}
where $\mathfrak{B}^{(i)}$ is given by (\ref{FunctionBfrak}).

Consider the map $\Pi^{\textnormal{edge}}_N:\mathfrak{HA}_N\to \overline{\mathbb{W}}_N$ \footnote{Observe that if we denote by $\mathfrak{HA}_N^{\circ}$ the subset of $\mathfrak{HA}_N$ with all (not just on a single level) coordinates in the array being distinct then $\Pi^{\textnormal{edge}}_N:\mathfrak{HA}^{\circ}_N\to \mathbb{W}_N$.} given by $\Pi^{\textnormal{edge}}_N\left[\left(x^{(i)}\right)_{i=1}^N\right]=\left(x^{(i)}_{\left \lfloor \frac{i+1}{2}\right \rfloor}\right)_{i=1}^N$. We then have the following result which is the analogue in our setting of a result of Baryshnikov \cite{Baryshnikov} and Gravner-Tracy-Widom \cite{GTW} for a fixed time and at the process level of results of O'Connell-Yor \cite{OConnellYor} and Bougerol-Jeulin \cite{BougerolJeulin}, see also \cite{BBO,Warren}, for the Brownian model.

\begin{prop} Under the assumptions of Proposition \ref{GeneralInteracting}, consider a probability measure $\mathcal{M}$ on $\mathbb{W}_N$ and let $\mathcal{M}^{\textnormal{edge}}=\left(\Pi^{\textnormal{edge}}_{2N-1}\right)_*\mathfrak{GM}_{2N-1}^{\mathcal{M}}$. Suppose that (\ref{Edge1}) is initialized according to $\mathcal{M}^{\textnormal{edge}}$, while the diffusion process $\left(\left(\mathsf{z}_1(t),\dots,\mathsf{z}_N(t)\right);t\ge 0\right)$ associated to the semigroup $\left(\mathfrak{P}_t^{(N),(\lambda_1,\dots,\lambda_N)};t\ge 0\right)$ is initialized according to $\mathcal{M}$. Then, the rightmost coordinates of both processes are equal in distribution, namely
\begin{align*}
 \left(\mathsf{x}_{2N-1}(t);t\ge 0\right)\overset{\textnormal{d}}{=}\left(\mathsf{z}_N(t);t\ge 0\right).
\end{align*}

\end{prop}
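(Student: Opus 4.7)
The plan is to exploit Proposition \ref{GeneralInteracting} together with the observation that the SDE (\ref{Edge1}) is literally the autonomous subsystem carved out by the rightmost coordinates in the dynamics (\ref{Dynamics}). I will spell this out in three steps.

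First, I would verify that $(\mathsf{x}^{(1)}_1, \mathsf{x}^{(2)}_1, \mathsf{x}^{(3)}_2, \dots, \mathsf{x}^{(2N-1)}_N)$, i.e. the image under $\Pi^{\textnormal{edge}}_{2N-1}$ of the full array process satisfying (\ref{Dynamics}), is autonomous. Indeed, the rightmost particle on level $2n-1$, namely $\mathsf{x}^{(2n-1)}_n$, receives only a $+$ local time push since the index $i = n+1$ on level $2n-2$ overflows and the corresponding $\mathfrak{l}^{(2n-1),-}_n$ term vanishes by convention; an analogous statement holds on even levels. Moreover, the $+$ push on level $k$ comes from collision with the rightmost particle on level $k-1$. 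Relabeling the rightmost on level $i$ as $\mathsf{x}_i$, the resulting SDE is precisely (\ref{Edge1}) with the functions $\mathsf{B}^{(i)}$ from (\ref{FunctionB}) and the local time $\mathfrak{l}_i$ being that of $\mathsf{x}_i - \mathsf{x}_{i-1}$ at $0$.

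Second, I would note that if the array process is initialized according to $\mathfrak{GM}_{2N-1}^{\mathcal{M}}$, then by definition of the pushforward, the initial distribution of the rightmost coordinates is $\mathcal{M}^{\textnormal{edge}} = \left(\Pi^{\textnormal{edge}}_{2N-1}\right)_* \mathfrak{GM}_{2N-1}^{\mathcal{M}}$. Under the (\textbf{YW}) assumption, the SDE (\ref{Edge1}) admits a unique strong solution, so the process of rightmost coordinates extracted from the array and the strong solution of (\ref{Edge1}) started from $\mathcal{M}^{\textnormal{edge}}$ agree in law. Hence
\begin{align*}
 \left(\mathsf{x}_{2N-1}(t);t\ge 0\right) \overset{\textnormal{d}}{=} \left(\mathsf{x}^{(2N-1)}_N(t);t\ge 0\right),
\end{align*}
where the right-hand side refers to the top-level rightmost coordinate of the array dynamics initialized according to $\mathfrak{GM}_{2N-1}^{\mathcal{M}}$.

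Finally, I would invoke Proposition \ref{GeneralInteracting}, which identifies the law of the top row $\left(\mathsf{x}^{(2N-1)}(t);t\ge 0\right)$ with that of the process $\left(\mathsf{z}_1(t),\dots,\mathsf{z}_N(t);t\ge 0\right)$ associated to $\mathfrak{P}_t^{(N),(\lambda_1,\dots,\lambda_N)}$ starting from $\mathcal{M}$. Projecting onto the last coordinate gives $\mathsf{x}^{(2N-1)}_N \overset{\textnormal{d}}{=} \mathsf{z}_N$ at the process level, and combining with the previous step yields the claim. The only mild subtlety in this plan is checking that the overflowing indices on each level really do kill the outward-pointing local time term, so that (\ref{Edge1}) — rather than some SDE involving phantom barriers — is the correct autonomous edge system; but this is a direct bookkeeping check against (\ref{SemLocTime1})–(\ref{SemLocTime2}) and the conventions stated immediately below them.
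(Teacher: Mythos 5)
Your proposal is correct and is essentially an expanded version of the paper's own (very terse) proof, which simply invokes Proposition \ref{GeneralInteracting} and reads off the top-right coordinate of the array. The three steps you give --- checking via the index-overflow conventions in (\ref{SemLocTime1})--(\ref{SemLocTime2}) that the rightmost coordinates really do form the autonomous system (\ref{Edge1}), matching initial laws and using Yamada--Watanabe uniqueness to identify the edge of the array with the strong solution of (\ref{Edge1}), and then projecting the conclusion of Proposition \ref{GeneralInteracting} onto the last coordinate --- are exactly the content the paper is compressing into one sentence.
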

\begin{proof}
This is a consequence of Proposition \ref{GeneralInteracting} by looking at the top right coordinate in $\mathfrak{HA}_{2N-1}$.
\end{proof}

\begin{rmk}
By standard results the diffusion process $\left(\left(\mathsf{z}_1(t),\dots,\mathsf{z}_N(t)\right);t\ge 0\right)$ associated to the semigroup $\left(\mathfrak{P}_t^{(N),(\lambda_1,\dots,\lambda_N)};t\ge 0\right)$ has a description in terms of a SDE with a singular drift, in analogy to Dyson's Brownian motion \cite{Dyson,AGW}. We say a bit more about this in Appendix \ref{SectionSDE}.
\end{rmk}

\begin{rmk}
A completely analogous result holds in the degenerate case for the rightmost coordinates of (\ref{Edge2}) and of the diffusion corresponding to $\mathcal{P}_t^{N,\mathfrak{h}^{(N)}}$. We omit the statement.
\end{rmk}

\begin{rmk}
The statement also extends under assumption (\textbf{E}) of Section \ref{SectionEntrance} to starting the dynamics using an entrance law (in the particular case of $\textnormal{BESQ}(\delta)$ for example starting from the origin) see Proposition \ref{EntranceLawProp}.
\end{rmk}

\begin{appendices}

\section{A remark on the SDE description for conditioned diffusions}\label{SectionSDE}
We briefly discuss the singular SDE description, akin to the Dyson SDE \cite{Dyson,AGW}, for the diffusion associated to the semigroup $\left(\mathfrak{P}_t^{(N),(\lambda_1,\dots,\lambda_N)};t\ge0\right)$.

\begin{prop} Under the running assumptions of Section \ref{SectionInteractingDiffusions} on the $\mathsf{L}$-diffusion, consider the diffusion process $\left(\left(\mathsf{z}_1(t),\dots,\mathsf{z}_N(t)\right);t\ge 0\right)$ associated to $\left(\mathfrak{P}_t^{(N),(\lambda_1,\dots,\lambda_N)};t\ge0\right)$, with $\lambda_1<\cdots<\lambda_N$, starting from $\mathbb{W}_N$. Then, it is the unique strong solution to the SDE with almost surely non-colliding coordinates
\begin{align}\label{Top1}
d\mathsf{z}_i(t)=\sqrt{2\mathsf{a}\left(\mathsf{z}_{i}(t)\right)}d\mathsf{w}_{i}(t)+\left(\mathsf{b}\left(\mathsf{z}_i(t)\right)+2\mathsf{a}\left(\mathsf{z}_i(t)\right)\frac{\partial_{i}\det \left(\psi_{\lambda_k}(\mathsf{z}_j(t))\right)^N_{k,j=1}}{\det\left(\psi_{\lambda_k}(\mathsf{z}_j(t))\right)^N_{k,j=1}}\right)dt, \ \ i=1,\dots,N,
\end{align}
where $\mathsf{w}_1,\dots,\mathsf{w}_N$ are independent standard Brownian motions and the notation $\partial_i$ denotes the derivative in the $x_i$ variable.
\end{prop}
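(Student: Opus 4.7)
The plan is to recognise that the semigroup $\mathfrak{P}_t^{(N),(\lambda_1,\dots,\lambda_N)}$ is exactly a Doob $h$-transform of the Karlin--McGregor semigroup for $N$ independent $\mathsf{L}$-diffusions killed on the boundary of $\mathbb{W}_N$, and then simply read off the associated SDE from the transformed generator.

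First I would set $H(x)=\det(\psi_{\lambda_i}(x_j))_{i,j=1}^N$ and observe that, because $\mathsf{L}\psi_{\lambda_i}=\lambda_i\psi_{\lambda_i}$, expanding the determinant along any row gives $\sum_{j=1}^N \mathsf{L}_{x_j} H = \big(\sum_{i=1}^N\lambda_i\big) H$. By Lemma \ref{StrictPosLem} one has $H>0$ on $\mathbb{W}_N$, so $H$ is a strictly positive eigenfunction for the Karlin--McGregor semigroup $\mathcal{K}_t(x,y)=\det(p_t(x_i,y_j))$ with eigenvalue $e^{t\sum_i\lambda_i}$. Comparing (\ref{ConditionedSemigroup}) with this shows that $\mathfrak{P}_t^{(N),(\lambda_1,\dots,\lambda_N)}$ is the honest Doob $h$-transform of $\mathcal{K}_t$ by $H$, where the $e^{-t\sum_i\lambda_i}$ factor exactly absorbs the eigenvalue.

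Next I would compute the generator $\mathcal{G}$ of this Doob-transformed semigroup on smooth compactly supported functions in $\mathbb{W}_N$. A direct calculation using the product rule gives, for each $j$, $H^{-1}\mathsf{L}_{x_j}(Hf) = \mathsf{L}_{x_j}f + 2\mathsf{a}(x_j)\tfrac{\partial_j H}{H}\partial_j f + \tfrac{\mathsf{L}_{x_j}H}{H}f$. Summing over $j$, subtracting $\sum_i\lambda_i$ and using the eigenfunction identity above cancels the zeroth-order term, leaving
\begin{equation*}
\mathcal{G} = \sum_{j=1}^N\left[\mathsf{a}(x_j)\partial_j^2 + \left(\mathsf{b}(x_j)+2\mathsf{a}(x_j)\frac{\partial_j H(x)}{H(x)}\right)\partial_j\right],
\end{equation*}
which is precisely the generator of the SDE (\ref{Top1}); this identifies the diffusion.

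For strong existence and uniqueness with non-colliding coordinates, I would argue as follows. On any compact subset of $\mathbb{W}_N$ the coefficients $\sqrt{2\mathsf{a}}$ and the drift $\mathsf{b}(x_i)+2\mathsf{a}(x_i)\partial_i H/H$ are smooth by (\ref{smoothness}) and the strict positivity of $H$, so standard results give pathwise uniqueness and strong existence up to the exit time $\tau$ from $\mathbb{W}_N$. To show $\tau=\infty$ almost surely, I would use that $\mathfrak{P}_t^{(N),(\lambda_1,\dots,\lambda_N)}$ was constructed as a conservative Markov semigroup on $\mathbb{W}_N$ (no mass escapes to $\partial\mathbb{W}_N$ or to the boundary points $l,r$, thanks to Lemma \ref{LemBoundaryBehaviour} and the Karlin--McGregor formula), so the Doob-transformed process lives in $\mathbb{W}_N$ for all time. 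Equivalently, $1/H$ serves as a Lyapunov-type function on $\mathbb{W}_N$ whose expectation under the conditioned process is preserved, preventing the coordinates from ever colliding or reaching the boundary of the state space. The main (minor) obstacle is just giving a clean argument for this non-collision at the SDE level rather than via the semigroup, which is why appealing directly to the conservativeness of $\mathfrak{P}_t^{(N),(\lambda_1,\dots,\lambda_N)}$ (already established in Section \ref{SectionConditioned}) is the cleanest route. Uniqueness in law of the SDE with non-colliding coordinates then follows from uniqueness of the Markov semigroup with the stated generator, and pathwise uniqueness from the local Lipschitz regularity of the drift inside $\mathbb{W}_N$.
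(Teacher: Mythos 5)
Your computation of the Doob-transformed generator is correct and matches the paper's first step exactly. The difference lies in the non-collision argument, where you have a concrete error and a subtler gap.

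The error: you propose $1/H(x) = 1/\det\left(\psi_{\lambda_k}(x_j)\right)_{k,j=1}^N$ as the Lyapunov-type martingale for the conditioned process, but this is not $\mathcal{G}$-harmonic. Indeed, with $\mathcal{G} = H^{-1}\circ\big(\sum_j\mathsf{L}_{x_j}\big)\circ H - \sum_i\lambda_i$, one finds $\mathcal{G}(1/H) = -\big(\sum_i\lambda_i\big)/H \neq 0$ whenever $\sum_i\lambda_i \neq 0$. The correct quantity, and the one the paper uses, is
\begin{equation*}
\mathfrak{U}(t) = \frac{\prod_{i=1}^N\psi_{\lambda_i}(\mathsf{z}_i(t))}{\det\left(\psi_{\lambda_i}(\mathsf{z}_j(t))\right)_{i,j=1}^N},
\end{equation*}
i.e. the reciprocal of the non-collision probability from (\ref{CollisionProba}). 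Since $\sum_j\mathsf{L}_{x_j}\big(\prod_i\psi_{\lambda_i}(x_i)\big) = \big(\sum_i\lambda_i\big)\prod_i\psi_{\lambda_i}(x_i)$, the eigenvalue cancels and $\mathcal{G}\mathfrak{U}=0$. The paper then observes that $\mathfrak{U}$ is a non-negative continuous local martingale, hence a supermartingale, which blows up as $t\uparrow\tau$ (the determinant in the denominator vanishes at a collision while the numerator stays bounded away from zero inside $(l,r)$); a non-negative supermartingale cannot explode, so $\tau=\infty$ a.s. This gives non-collision directly at the SDE level, which is what the proposition asserts.

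The subtler gap: your first route, appealing to the conservativeness of $\mathfrak{P}_t^{(N),(\lambda_1,\dots,\lambda_N)}$ established in Section \ref{SectionConditioned}, is morally sound but not self-contained as written. The local SDE solution on $[0,\tau)$ must first be identified in law with the Markov process generated by $\mathfrak{P}_t^{(N),(\lambda_1,\dots,\lambda_N)}$ up to exit from $\mathbb{W}_N$, which requires a local martingale-problem uniqueness argument that you do not supply. Only after that identification can conservativeness be transferred to the SDE solution to conclude $\tau=\infty$. The paper avoids this circularity entirely by working directly with the supermartingale $\mathfrak{U}$, which is why that route is preferable here.
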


\begin{proof}
First, by standard results, see for example \cite{RevuzYor,Pinsky}, on how a diffusion process generator transforms under a Doob $h$-transform we get that the generator $\mathbf{L}^{(N),(\lambda_1,\dots,\lambda_N)}$ of $\left(\mathfrak{P}_t^{(N),(\lambda_1,\dots,\lambda_N)};t\ge 0\right)$ is given by
\begin{align*}
 \mathbf{L}^{(N),(\lambda_1,\dots,\lambda_N)}&=\left(\frac{\det\left(\psi_{\lambda_i}(x_j)\right)_{i,j=1}^N}{\prod_{i=1}^N\psi_{\lambda_i}(x_i)}\right)^{-1}\circ\left[\sum_{i=1}^N\mathsf{L}_{x_i}^{\psi_{\lambda_i}}\right]\circ\left(\frac{\det\left(\psi_{\lambda_i}(x_j)\right)_{i,j=1}^N}{\prod_{i=1}^N\psi_{\lambda_i}(x_i)}\right)\\  
 &=\sum_{i=1}^N\mathsf{a}(x_i)\partial_{x_i}^2+\sum_{i=1}^N\left(\mathsf{b}(x_i)+2\mathsf{a}(x_i)\frac{\partial_{x_i}\det\left(\psi_{\lambda_k}(x_j)\right)_{k,j=1}^N}{\det\left(\psi_{\lambda_k}(x_j)\right)_{k,j=1}^N}\right)\partial_{x_i}.
\end{align*}
Then, the argument to show that the associated SDE (\ref{Top1}) has a unique strong solution with almost surely no collisions if started in $\mathbb{W}_N$ \footnote{Presumably, possibly under some further assumptions, one could start the SDEs from a singular point in $\overline{\mathbb{W}}_N$ with coinciding coordinates, see for example \cite{AGW} and \cite{Noncolliding} for a theory covering a general class of examples which unfortunately does not seem to apply directly to our setting. We do not attempt to justify such a statement here but it would be interesting to explore it further.} is standard and goes as follows, see for example \cite{BruMult,Bru,Demni,AGW}. Consider the stopping time
\begin{align*}
\tau=\inf\left\{t\ge 0: \exists \ i<j \ \textnormal{ such that } \mathsf{z}_i(t)=\mathsf{z}_j(t)\right\}.
\end{align*}
Due to the regularity of the coefficients away from the singularity, when two coordinates coincide, a unique strong solution exists up to time $\tau$, see \cite{IkedaWatanabe}. It then suffices to show that $\tau=\infty$ almost surely. Observe that, the process $\left(\mathfrak{U}(t);t\ge 0\right)$ given by
\begin{align*}
 \mathfrak{U}(t)=\frac{\prod_{i=1}^N\psi_{\lambda_i}(\mathsf{z}_i(t))}{\det\left(\psi_{\lambda_i}(\mathsf{z}_j(t))\right)^N_{i,j=1}}, \ \ t\ge 0,
\end{align*}
is a non-negative continuous local martingale and thus a non-negative continuous supermartingale which moreover satisfies $\lim_{t\to \tau}\mathfrak{U}(t)=\infty$. The conclusion follows.
\end{proof}

\section{A remark on the determinantal point process property}\label{SectionDet}

Observe that, both the dynamics corresponding to the diffusion process with semigroup $\left(\mathfrak{P}_t^{(N),(\lambda_1,\dots,\lambda_N)};t\ge 0\right)$ viewed at different times $t_1\le \cdots \le t_m$ and the random interlacing array obtained by running the dynamics (\ref{Dynamics}) for a fixed time $T$ can be viewed as random point processes. In both cases, 
by the results of this paper, if one writes down explicitly the corresponding probability distributions (for certain initial conditions) it is seen that they are given in terms of products of determinants having a special structure. 

It is then an immediate consequence of the Eynard-Mehta theorem \cite{EynardMehta,BorodinRains}, that these point processes are determinantal. Namely, their correlation functions are determined by (and given as determinants of) a so-called correlation kernel, see for example \cite{AGW,BorodinRains}.

However, the Eynard-Mehta theorem, see \cite{BorodinRains}, gives the correlation kernel in an implicit form and to obtain an explicit formula for the kernel is a highly non-trivial problem that we do not pursue here. One needs to invert a certain matrix or equivalently perform a biorthogonalization explicitly. Then, the model can be amenable to asymptotic analysis. It would be interesting to investigate this for the general setting of this paper in future work.
\end{appendices}


\begin{thebibliography}{}
 \bibitem{Adler}
  {\sc M. Adler, E. Nordenstam. P. van Moerbeke,}
\textit{Consecutive minors for Dyson’s Brownian motions},Stochastic Processes and their applications, Vol. 124, Issue 6, 2023-2051, (2014).


 \bibitem{AGW}
  {\sc G. W. Anderson, A. Guionnet, O. Zeitouni,}
\textit{An Introduction to Random Matrices}, Cambridge studies in advanced mathematics, Cambridge University Press, (2010).

 \bibitem{Andreief}
  {\sc C. Andr\'{e}ief,}
\textit{Note sur une relation entre les int\'{e}grales d\'{e}finies des produits des fonctions}, M\'{e}m. Soc. Sci. Phys. Nat. Bordeaux (3) 2 , 1-14, (1886).





\bibitem{Randomgrowth}
  {\sc T. Assiotis,}
\textit{Random Surface Growth and Karlin-McGregor polynomials}, Electronic Journal of Probability, Volume 23, paper no. 106, (2018).

\bibitem{Gateway}
  {\sc T. Assiotis,}
\textit{On a gateway between the Laguerre process and dynamics on partitions}, ALEA, \textbf{16}, 1055-1076, (2019).

\bibitem{InterlacingDiffusions}
  {\sc T. Assiotis, N. O'Connell, J. Warren,}
\textit{Interlacing Diffusions}, Seminaire de Probabilit\'{e}s L, 301-380, (2019).






  \bibitem{Banuelos}
  {\sc R. Ba\~{n}uelos, R. G. Smits,}
\textit{Brownian motion in cones}, Probability Theory and Related Fields, \textbf{108}, no. 3, 299-319, (2001).

\bibitem{Baryshnikov}
  {\sc Y. Baryshnikov,}
\textit{GUEs and queues}, Probability Theory and Related Fields, \textbf{119}, no. 2, 256-274, (2001).


\bibitem{Benaych}
  {\sc F. Benaych-Georges,}
\textit{Rectangular random matrices, related convolution}, Probability Theory and Related Fields, \textbf{144}, no. 3-4, 471–515, (2009).

\bibitem{BBO}
  {\sc P. Biane, P. Bougerol, N. O' Connell,}
\textit{Littelmann paths and Brownian paths}, Duke Mathematical Journal, 130 (1), 127-167, (2005).



\bibitem{BorodinSalminen}
  {\sc A.N. Borodin, P. Salminen,}
\textit{Handbook of Brownian Motion-Facts and Formulae Second Edition},  Probability and Its Applications, Birkhauser, (2002).


\bibitem{BorodinFerrari}
  {\sc A. Borodin, P. Ferrari,}
 \textit{Anisotropic Growth of Random Surfaces
in 2 + 1 Dimensions}, Communications in Mathematical Physics, Vol. 325, No. 2, 603-684, (2014).

 
\bibitem{BorodinKuan}
  {\sc A. Borodin, J. Kuan,}
 \textit{Random surface growth with a wall and Plancherel measures for $O(\infty)$}, Communications on Pure and Applied Mathematics, Vol. 63, No. 7, 831-894, (2010).
 
 \bibitem{BorodinOlshanski}
  {\sc A. Borodin, G. Olshanski,}
 \textit{ Markov processes on the path space of the Gelfand-Tsetlin graph and on its boundary}, Journal of Functional Analysis, Vol. 263, 248-303, (2012).

 
 \bibitem{BorodinRains}
  {\sc A. Borodin, E. Rains,}
 \textit{Eynard-Mehta Theorem, Schur Process, and their Pfaffian Analogs}, Journal of Statistical Physics, Vol. 121, No. 3-4, 291-317, (2005).
 
 
\bibitem{BougerolJeulin}
  {\sc P. Bougerol, T. Jeulin,}
\textit{Paths in Weyl chambers and random matrices}, Probability Theory and Related Fields, \textbf{124}, no. 4, 517–543, (2002). 


\bibitem{BrezinGross}
  {\sc E. Brezin, D. J. Gross},
\textit{The external field problem in the large-$N$ limit of QCD}, Phys. Lett. B, 97, 120-124, (1980).


\bibitem{BruMult}
  {\sc M.-F. Bru,}
\textit{Diffusions of perturbed principal component analysis}, Journal of Multivariate Analysis, Vol. 29, No. 1, 127-136, (1989).

\bibitem{Bru}
  {\sc M.-F. Bru,}
\textit{Wishart processes}, Journal of Theoretical Probability, \textbf{4}, 725-751, (1991).

 \bibitem{Cerenzia}
  {\sc M. Cerenzia,}
\textit{A path property of Dyson gaps, Plancherel measures for $Sp(\infty)$, and random surface growth}, available from arXiv:1506.08742, (2015).

 \bibitem{CerenziaKuan}
  {\sc M. Cerenzia, J. Kuan,}
\textit{Hard-edge asymptotics of the Jacobi growth process}, Annales de l'Institut Henri Poincar\'{e}, Probabilit\'{e}s et Statistiques, Vol. 56, No. 4,  2329-2355, (2020).

  \bibitem{DeBlassie}
  {\sc R. D. DeBlassie,}
\textit{Exit times from cones in $\mathbb{R}^n$ of Brownian motion}, Probability Theory and Related Fields, \textbf{74}, no. 1, 1–29, (1987).


\bibitem{Demni}
{\sc N. Demni,}
\textit{The Laguerre process and the generalized Hartman-Watson law}, Bernoulli, Vol. 13, No. 2, 556-580, (2007).





\bibitem{WishartDrifts}
  {\sc C. Donati-Martin, Y. Doumerc, H. Matsumoto, M. Yor,} \textit{Some Properties of the Wishart Processes and a Matrix Extension of the Hartman-Watson Laws}, Publ. RIMS, Kyoto Univ., \textbf{40}, 1385-1412, (2004).
  
  
  
  \bibitem{Doob}
  {\sc J. L. Doob,}
 \textit{Classical potential theory and its probabilistic counterpart}, Grundlehren der Mathematischen Wissenschaften, Vol. 262, Springer-Verlag, New York, (1984).
  
  
  
  \bibitem{DozierSilverstein}
  {\sc B. R. Dozier, J. W. Silverstein,} \textit{On the empirical distribution of eigenvalues of large dimensional information-plus-noise-type matrices}, J. Multivariate Anal. 98, no. 4, 678–694, (2007) .
  
  
  \bibitem{Doumerc}
  {\sc Y. Doumerc, N. O'Connell,}
\textit{Exit problems associated with finite reflection groups}, Probability Theory and Related Fields, \textbf{132}, no. 4, 501–538, (2005).
  
  \bibitem{Dyson}
{\sc F. J. Dyson,}
\textit{A Brownian‐Motion Model for the Eigenvalues of a Random Matrix}, Journal of Mathematical Physics, \textbf{3}, 1191-1198, (1962).
  
\bibitem{EthierKurtz} 
{\sc S. Ethier, T. Kurtz,}  \textit{Markov Processes Characterization and Convergence},Wiley Series in Probability and Statistics, (1986).  


  
   \bibitem{EynardMehta}
              {\sc B. Eynard, M. L. Mehta,}
              \textit{Matrices coupled in a chain. I. Eigenvalue correlations}, Journal of Physics A. Mathematical and Theoretical, \textbf{31}, 4449–4456, (1998).


\bibitem{FerrariFringsPartial}
{\sc P. Ferrari, R. Frings,}
\textit{On the Partial Connection Between Random Matrices and Interacting Particle Systems}, Journal of Statistical Physics, Vol. 141, Issue 4, 613-637, (2010).

\bibitem{FerrariFrings}
{\sc P. Ferrari, R. Frings,}
\textit{Perturbed GUE Minor Process and Warren's Process with drifts}, Journal of Statistical Physics, Vol. 154, Issue 1, 356-377, (2014).



\bibitem{ForresterLiu}
{\sc P. J. Forrester, D-Z Liu}, \textit{Singular values for products of complex Ginibre matrices with a source: hard edge limit and phase transition},  Comm. Math. Phys. 344, no. 1, 333–368, (2016).


\bibitem{Garbit}
  {\sc R. Garbit, K. Raschel,}
\textit{On the exit time from a cone for Brownian motion with drift}, Electronic Journal of Probability, Volume 19, paper no. 63, 27pp, (2014).


\bibitem{SurveyBessel}
  {\sc A.Going-Jaeschke, M. Yor,}
\textit{A survey and some generalizations of Bessel processes}, Bernoulli, Vol. 9, No. 2, 313-349, (2003).

\bibitem{GorinShkolnikov}
  {\sc V. Gorin, M. Shkolnikov,}
\textit{Limits of Multilevel TASEP and similar processes}, Annales de l'Institut Henri Poincar\'{e}, Probabilit\'{e}s et Statistiques, Vol. 61, No. 1, 18-27, (2015).



\bibitem{Grabiner}
  {\sc D. J. Grabiner,}
\textit{Brownian motion in a Weyl chamber, non-colliding particles, and random matrices}, Annales de l'Institut Henri Poincar\'{e}, Probabilit\'{e}s et Statistiques, Vol. 35, No. 2, 177-204, (1999).

\bibitem{Noncolliding}
  {\sc P. Graczyk, J. Ma\l ecki,}
\textit{Strong solutions of non-colliding particle systems}, Electronic Journal of Probability, Volume 19, paper no. 119, (2014).

\bibitem{GTW}
  {\sc J. Gravner, C. A. Tracy, H. Widom,}
\textit{Limit Theorems for Height Fluctuations in a Class of Discrete Space and Time Growth Models}, Journal of Statistical Physics, Vol. 102, 1085–1132, (2001).

\bibitem{GrossWitten}
  {\sc  D. J. Gross, E. Witten},
\textit{Possible third order phase transition in the large-$N$ lattice gauge theory}, Phys. Rev. D, 21, 446-453, (1980).


\bibitem{GrossRichards}
  {\sc K. I. Gross, D. St. P. Richards,}
\textit{Hypergeometric functions on complex matrix space},  Bull. Amer. Math. Soc. (N.S.) 24, no. 2, 349–355, (1991).


\bibitem{HCRectangular3}
  {\sc T. Guhr, T. Wettig},
\textit{An Itzykson-Zuber-like integral and diffusion for complex ordinary and supermatrices},  J. Math. Phys. 37, no. 12, 6395–6413, (1996).

\bibitem{HardyKuijlaars}
  {\sc A. Hardy, A. B. J. Kuijlaars,}
\textit{Large deviations for a non-centered Wishart matrix},   Random Matrices Theory Appl. 2 , no. 1, 1250016, 40 pp,  (2013).

\bibitem{HarishChandra}
  {\sc Harish-Chandra,}
\textit{Differential Operators on a Semisimple Lie Algebra},   American Journal of Mathematics, Vol. 79 , no. 1, 87-120,  (1957).



  \bibitem{Hua}
   {\sc Hua L.K.,}
  \textit{Harmonic analysis of functions of several complex variables in the classical domains}, Chinese edition: Peking, Science Press (1958), English edition: Transl. Math. Monographs \textbf{6}, RI Providence, American Mathematical Society (1963).
  
   \bibitem{Hurewicz}
   {\sc W. Hurewicz,}
  \textit{Lectures on ordinary differential equations}, The Technology Press of the Massachusetts Institute of Technology, Cambridge, Mass.; John Wiley \& Sons, Inc., New York, (1958).
 


\bibitem{ItoMckean}
{\sc K. Ito, H.P. McKean,}
\textit{Diffusion Processes and their Sample Paths, Second Printing}, Springer-Verlag, (1974).

\bibitem{IkedaWatanabe}
{\sc N. Ikeda, S. Watanabe,}
\textit{Stochastic differential equations and diffusion processes, Second edition,} North-Holland Mathematical Library, 24. North-Holland Publishing Co., Amsterdam; Kodansha, Ltd., Tokyo, (1989).

\bibitem{James1}
{\sc A. T. James,}
\textit{Zonal polynomials of the real positive definite symmetric matrices},  Annals of Mathematics, Vol. 74, No. 3, 456–469, (1961).

\bibitem{James2}
{\sc A. T. James,}
\textit{Distributions of Matrix Variates and Latent Roots Derived from Normal Samples}, Annals of Mathematical Statistics, Vol.35, No. 2, 475-501, (1964).


\bibitem{HCRectangular1}
  {\sc A. Jackson, M. Sener, J. Verbaarschot,}
\textit{Finite volume partition functions and Itzykson-Zuber integrals}, Physics Letters B, \textbf{387}, no.2, 355-360, (1996).

\bibitem{JonesOConnell}
{\sc L. Jones, N. O'Connell,}
\textit{Weyl Chambers, Symmetric Spaces and Number Variance Saturation}, ALEA, Vol. 2, 91-118, (2006).

\bibitem{Karlin}
{\sc S. Karlin,}
\textit{Total Positivity, Volume 1}, Stanford University Press, (1968).

\bibitem{KarlinMcGregor}
{\sc S. Karlin, J. McGregor,}
\textit{Coincidence probabilities}, Pacific Journal of Mathematics, Vol. 9, No. 6, 1141-1164, (1959).

\bibitem{StochasticBook}
{\sc S. Karlin, H.M. Taylor,}
\textit{A Second Course in Stochastic Processes}, Academic Press Inc., New York-London, (1981).

\bibitem{Katori1}
{\sc M. Katori,}
\textit{Reciprocal Time Relation of Noncolliding Brownian Motion with Drift}, Journal of Statistical Physics, \textbf{148}, 38-52, (2012).


\bibitem{KatoriBook}
{\sc M. Katori,}
\textit{Bessel processes, Schramm-Loewner evolution, and the Dyson model}, SpringerBriefs in Mathematical Physics, 11, (2015).


 \bibitem{OConnell}
  {\sc W. K\"onig, N. O'Connell,}
 \textit{Eigenvalues of the Laguerre Process as Non-Colliding Squared Bessel Processes}, Electronic Communications in Probability, Vol. 6, 107-114, (2001).
 
 \bibitem{Chin}
{\sc C. Lun,}
\textit{Eigenvalues of a Hermitian matrix of Brownian motions with drift}, M.Sc. thesis, University of Warwick, (2012).


 \bibitem{Macdonald}
    {\sc I. G. Macdonald,}
   \textit{Symmetric Functions and Hall Polynomials}, Second Edition, Oxford University Press, (1995).


 \bibitem{MatsumotoYor}
  {\sc H. Matsumoto, M. Yor,}
 \textit{An analogue of Pitman's 2M-X theorem for exponential Wiener functionals. I. A time-inversion approach}, Nagoya Math. Journal, \textbf{159}, 125-166, (2000).
 
 
 \bibitem{Muirhead}
{\sc R. J. Muirhead,}
\textit{Aspects of multivariate statistical theory}, Wiley Series in Probability and Mathematical Statistics, John Wiley \& Sons, Inc., New York, (1982).
 
  \bibitem{NQR}
  {\sc M. Nica, J. Quastel, D. Remenik,}
 \textit{One-sided reflected Brownian motions and the KPZ fixed point}, Forum of Mathematics, Sigma, Volume 8, e63, (2020).


 \bibitem{OConnellTams}
   {\sc N. O'Connell,}
  \textit{A path-transformation for random walks and the Robinson-Schensted correspondence}, Transactions of the American Mathematical Society, \textbf{355}, no. 9, 3669–3697, (2003).
  
   \bibitem{Toda}
  {\sc N. O'Connell,}
\textit{Directed polymers and the quantum Toda lattice}, Annals of Probability, Vol. 40, No. 2, 437-458, (2012).


  \bibitem{OConnellPosDef}
  {\sc N. O'Connell,}
\textit{Interacting diffusions on positive definite matrices}, Probability Theory and Related Fields, https://doi.org/10.1007/s00440-021-01039-3, (2021).

  
   \bibitem{OConnellYor}
    {\sc N. O'Connell, M. Yor,}
   \textit{A Representation for Non-Colliding Random Walks}, Electronic Communications in Probability, Vol. 7, 1-12, (2002).

 
 
 
 \bibitem{Pinsky}
  {\sc R.G. Pinsky,}
\textit{Positive Harmonic Functions and Diffusions, An integrated analytic and probabilistic Approach,}
Cambridge Studies in Advanced Mathematics, Vol. 45, (1995).
 


 \bibitem{PitmanYor}
  {\sc J. Pitman, M. Yor,}
\textit{Bessel processes and infinitely divisible laws}, Stochastic integrals (Proc. Sympos., Univ. Durham, Durham, 1980), Lecture Notes in Math., 851, Springer, Berlin, 285-370, (1981).


  \bibitem{Puchala}
  {\sc Z. Pucha\l{}a, T. Rolski,}
\textit{ The exact asymptotic of the collision time tail distribution for
independent Brownian particles with different drifts}, Probability Theory and Related Fields, \textbf{142}, no. 3-4, 595-617, (2008).

\bibitem{RevuzYor}
{\sc D. Revuz, M. Yor,}
\textit{Continuous Martingales and Brownian Motion}, Third Edition, A Series of Comprehensive Studies in Mathematics, Vol. 293, Springer-Verlag,(1999).

 \bibitem{RiderValko}
  {\sc B. Rider, B. Valko,} \textit{Matrix Dufresne identities}, International Mathematics Research Notices, No. 1, 174–218, (2016).

 \bibitem{RogersPitman}
  {\sc L.C.G. Rogers, J. Pitman,} \textit{Markov Functions}, Annals of Probability, Vol. 9, No. 4, 573-582, (1981).
  


\bibitem{QCD}
  {\sc E. V. Shuryak, J. J. M. Verbaarschot,}
\textit{Random matrix theory and spectral sum rules for the Dirac operator in QCD}, Nuclear Phys. A 560, (1), 306-320, (1993).


 \bibitem{Sun}
  {\sc Y. Sun,}
\textit{Laguerre and Jacobi analogues of the Warren process}, available from arxiv:1610.01635, (2016).


\bibitem{Katori2}
{\sc Y. Takahashi, M. Katori,}
\textit{Noncolliding Brownian motion with drift and time-dependent Stieltjes-Wigert determinantal point process}, Journal of Statistical Physics, \textbf{53}, 103305, (2012).


\bibitem{QCD2}
  {\sc J. J. M. Verbaarschot, I. Zahed},
\textit{Spectral density of the QCD Dirac operator near zero virtuality}, Physical Review Letters 70, (25), 3852-3855, (1993).

 \bibitem{Virag}
  {\sc B. Virag,}
\textit{The heat and the landscape I}, available from arxiv:2008.07241, (2020).


  \bibitem{Watanabe}
   {\sc S. Watanabe,}
\textit{On time-inversion of one-dimensional diffusion processes}, Z. Wahrscheinlichkeitstheorie und Verw. Gebiete, \textbf{31}, 115-124, (1975).

 \bibitem{Warren}
  {\sc J. Warren,}
 \textit{Dyson's Brownian motions, intertwining and interlacing}, Electronic Journal of Probability, Vol.12, 573-590, (2007).
 
  \bibitem{WarrenWindridge}
  {\sc J. Warren, P. Windridge,}
 \textit{Some examples of dynamics for Gelfand-Tsetlin patterns}, Electronic Journal of Probability, Vol.14, paper no. 59, 1745–1769, (2009).
 
  \bibitem{Williams}
  {\sc D. Williams,}
 \textit{Path decomposition and continuity of local time for one-dimensional diffusions. I.}, Proc. London Math. Soc., (3), 28 , 738-768, (1974).
 
   \bibitem{ReflectedBrownianKPZ}
    {\sc T. Weiss, P. Ferrari, H. Spohn,}
  \textit{Reflected Brownian Motions in the KPZ Universality Class}, SpringerBriefs in Mathematical Physics,Vol. 18, Edition 1, (2017).

\bibitem{HCRectangular2}
  {\sc P. Zinn-Justin, J.-B. Zuber,}
\textit{On some integrals over the $U(N)$ unitary group and their large $N$ limit}, J. Phys. A , \textbf{36}, no. 12, 3173-3193, (2003).


\end{thebibliography}
\end{document}